\newtheorem{theorem}{Theorem}[section]
\newtheorem{corollary}[theorem]{Corollary}
\newtheorem{proposition}[theorem]{Proposition}
\newtheorem{lemma}[theorem]{Lemma}
 \newtheorem{problem}[theorem]{Problem}
\numberwithin{equation}{section}
\theoremstyle{definition}
\newtheorem{definition}[theorem]{Definition}
\newenvironment{example}
  {\pushQED{\qed}\examplex}
  {\popQED\endexamplex}
\theoremstyle{remark}
\newtheorem{remark}[theorem]{Remark}
\newtheorem{remarks}[theorem]{Remarks}
\newtheorem*{remark*}{Remark}
\newcommand{\1}[1]{{\mathbf 1}{\{#1\}}}
\newcommand{\R}{{\mathbb R}}
\newcommand{\Z}{{\mathbb Z}}
\newcommand{\Q}{{\mathbb Q}}
\newcommand{\N}{{\mathbb N}}
\newcommand{\ZP}{{\mathbb Z}_+}
\newcommand{\RP}{{\mathbb R}_+}
\newcommand{\Sp}[1]{{\mathbb S}^{#1}}
\newcommand{\overbar}[1]{\mkern 1.5mu\overline{\mkern-1.5mu#1\mkern-1.5mu}\mkern 1.5mu}
\newcommand{\bR}{\overbar{{\mathbb R}^d}}
\DeclareMathOperator{\Exp}{\mathbb{E}}
\renewcommand{\Pr}{{\mathbb P}}
\DeclareMathOperator{\Int}{int} 
\newcommand{\sint}{\mathop \mathrm{s\text{-}int}}
\DeclareMathOperator{\sign}{sgn}
\newcommand{\hull}{\mathop \mathrm{hull}}
\newcommand{\shull}{\mathop \mathrm{s\text{-}hull}}
\newcommand{\cl}{\mathop \mathrm{cl}}
\newcommand{\supp}{\mathop \mathrm{supp}}
\newcommand{\diam}{\mathop \mathrm{diam}}
\newcommand{\eps}{\varepsilon}
\newcommand{\re}{{\mathrm{e}}}
\newcommand{\rc}{{\mathrm{c}}}
\newcommand{\cA}{{\mathcal A}}
\newcommand{\cC}{{\mathcal C}}
\newcommand{\cD}{{\mathcal D}}
\newcommand{\cE}{{\mathcal E}}
\newcommand{\cH}{{\mathcal H}}
\newcommand{\cL}{{\mathcal L}}
\newcommand{\cP}{{\mathcal P}}
\newcommand{\cR}{{\mathcal R}}
\newcommand{\cS}{{\mathcal S}}
\newcommand{\cX}{{\mathcal X}}
\newcommand{\as}{\ \text{a.s.}}
\newcommand{\io}{\ \text{i.o.}}
\newcommand{\tod}{\overset{\mathrm{d}}{\longrightarrow}}
\newcommand{\bx}{{\mathbf{x}}}
\newcommand{\by}{{\mathbf{y}}}
\newcommand{\bz}{{\mathbf{z}}}
\newcommand{\bu}{{\mathbf{u}}}
\newcommand{\bv}{{\mathbf{v}}}
\newcommand{\bw}{{\mathbf{w}}}
\newcommand{\be}{{\mathbf{e}}}
\newcommand{\0}{{\mathbf{0}}}
\newcommand{\eqd}{\overset{d}{=}}
\newcommand{\Rad}{\text{Rad}}
\def\namedlabel#1#2{\begingroup  
    (#2)%
    \def\@currentlabel{#2}%
    \phantomsection\label{#1}\endgroup
}
\newlist{myenumi}{description}{10}
\setlist[myenumi]{labelindent=\parindent, leftmargin=*, align=left, itemsep=1pt, parsep=0pt}
\setlist[myenumi]{leftmargin=0pt}
\begin{document}

\title{Angular asymptotics for random walks}
\author{Alejandro L\'opez Hern\'andez\footnote{\texttt{\href{mailto:alejandro.lopez-hernandez20@imperial.ac.uk}{alejandro.lopez-hernandez20@imperial.ac.uk}}} \and Andrew R.\ Wade\footnote{\raggedright Durham University, Department of Mathematical Sciences, South Road, DH1 3LE.}~\footnote{\texttt{\href{mailto:andrew.wade@durham.ac.uk}{andrew.wade@durham.ac.uk}}}}

\date{\today}
\maketitle

\begin{abstract}
We study the set of directions asymptotically explored by a spatially homogeneous random walk
in $d$-dimensional Euclidean space.
We survey some pertinent results of Kesten and Erickson,
make some further observations, and present some examples.
We also explore links to the asymptotics of one-dimensional projections,
and to the growth of the convex hull of the random walk.
  \end{abstract}

\medskip

\noindent
{\em Key words:} Random walk; recurrent set; spherical asymptotics; asymptotic direction; convex hull; exceptional projections.

\medskip

\noindent
{\em AMS Subject Classification:}  60G50 (Primary) 60J05, 60F15 (Secondary)

\section{Introduction}
\label{sec:intro}

In this paper we examine some aspects of the way in which a random walk in $d$ dimensions explores space, specifically through
the limit points of the trajectory projected onto the sphere, and related questions concerning the growth of the convex hull of the walk.
We ask, roughly speaking, in which directions does the walk grow without bound?

Let $d \in \N := \{1,2,3,\ldots\}$. Let $X, X_1, X_2, \ldots$ be i.i.d.~random variables in $\R^d$,
and define the associated random walk $(S_n ; n \in \ZP)$ by $S_0 := \0$
and $S_n = \sum_{k=1}^n X_k$ for $n \geq 1$; here and subsequently $\0$
is the origin in $\R^d$ and $\ZP := \{0,1,2,\ldots\}$.
We suppose throughout that $S_n$ is genuinely $d$-dimensional, i.e.,
$\supp X$ is not contained in a $(d-1)$-dimensional subspace of $\R^d$.

Denote by $\bx \cdot \by$ the Euclidean inner product of vectors 
$\bx, \by \in \R^d$, and by $\| \, \cdot \, \|$ the Euclidean norm on $\R^d$. 
Set $\Sp{d-1} := \{ \bx \in \R^d : \| \bx \| = 1 \}$.
For $\bx \in \R^d \setminus \{ \0 \}$ define $\hat \bx := \bx / \| \bx \|$;
we also set $\hat \0 := \0$. We view vectors in $\R^d$ as column vectors where necessary. Whenever the appropriate expectation exists, we write $\mu := \Exp X$, so $\mu \in \R^d$ is the mean drift
vector of the random walk.

In Section~\ref{sec:directions} we look at the limit points in $\Sp{d-1}$ of the sequence
$\hat S_0, \hat S_1, \ldots$, drawing on closely related work of Kesten and Erickson~\cite{kesten70,ek,erickson76,erickson00}.
In particular, an adaptation of an idea of Kesten shows that the limit set is a.s.~equal to a deterministic closed $\cD \subseteq \Sp{d-1}$ (see Theorem~\ref{thm:D}).
In Section~\ref{sec:comments} we make more explicit the connection to the work of Kesten and Erickson~\cite{kesten70,ek,erickson76,erickson00}
on limit sets graded by particular speeds of growth. Section~\ref{sec:direction}
considers the special case where $\cD$ has a single element, in which the walk
is transient with a limiting direction. In Section~\ref{sec:plane}
we make some observations about the case where the walk has increments
with mean zero (zero drift). Section~\ref{sec:erickson}
presents an argument due to Erickson which shows that
an arbitrary closed $\cD \subseteq \Sp{d-1}$
can be achieved as the limit set by constructing a random walk with suitable heavy-tailed increments (Theorem~\ref{thm:heavytails}).
In Section~\ref{sec:convexity} we   introduce some relevant convexity ideas. Section~\ref{sec:projections}
turns to considering the asymptotics of the one-dimensional projections $S_n \cdot \bu$, $\bu \in \Sp{d-1}$.
Section~\ref{sec:hull} studies the convex hull of the trajectory, and draws
some connections to the preceding sections. In Section~\ref{sec:examples} we present some examples.
These illustrate, for instance, that while walks whose increments are symmetric
and have zero mean must have $\cD = \Sp{d-1}$ when $d=2$ (Proposition~\ref{prop:plane}),
for $d \geq 4$ the set $\cD$ can have measure zero in $\Sp{d-1}$ (Example~\ref{ex:four-dim}).

We make a few historical comments.
As observed by Blackwell, and Chung and Derman (see \cite[p.~493]{hs} and~\cite[p.~658]{blackwell}), it is a consequence of the Hewitt--Savage
zero--one law that $\Pr ( S_n \in A \io ) \in \{0,1\}$
for any Borel set $A \subseteq \R^d$. 
Those authors raised the question of classifying sets $A$ accordingly for a given random walk (see e.g.~\cite[p.~447]{cd}).
For bounded sets $A$ containing the origin in their interior, the question is that of recurrence vs.~transience, and is answered by Chung and Fuchs~\cite{cf}.

Attention focused on determining infinite sets $A$ visited infinitely often by (transient) random walks on $\Z^d$, $d \geq 3$, most notably for the case where the random walk converges to Brownian motion,
where a classification of recurrent sets~$A$ is available in the form of `Wiener's test':
for the case of simple symmetric random walk, see~\cite{ik,bucythesis,bucy},
for bounded and symmetric increments, see~\cite[\S 6.5]{ll}, and for increments with
 zero mean and finite second moments, see~\cite{jo,spitzer,uchiyama}. 
Wiener's test and its generalizations~\cite{bucy,lamp,murdoch} give analytic criteria in terms of the
capacity of~$A$ or Green's functions of the walk.
An early paper of Doney~\cite{doney} showed that Wiener's test can yield very useful
information, 
but, according to Spitzer, ``in general the computations are prohibitively difficult''~\cite[p.~320]{spitzer}.
The present paper addresses questions related to the transience or recurrence of sets $A$
that are cones or half-spaces.

\section{Recurrent directions}
\label{sec:directions}

We say $\bu \in \Sp{d-1}$ is a \emph{recurrent direction} for $S_n$
if the sequence $\hat S_n$ has an accumulation point at $\bu$, i.e.,
if $\hat S_n$ has $\bu$ as a subsequential limit.
Let $L$
be the (random) set of all recurrent directions for $S_n$;
equivalently, 
\[ L := \{ \bu \in \Sp{d-1} : \liminf_{n\to\infty} \| \hat S_n - \bu \| = 0 \} .\]
Note that in $L$ the possible accumulation point at $\0$ is excluded.
Also define
\[ \cD := \{ \bu \in \Sp{d-1} : \liminf_{n \to \infty} \| \hat S_n - \bu \| = 0, \as \} , \]
i.e., the set of all a.s.~recurrent directions for $S_n$. 

For $d=1$, ruling out the degenerate case where $\Pr ( X = 0 ) = 1$,
the well known trichotomy (see e.g.~\cite[Theorem~4.1.2]{dur})
states that either (i) $S_n \to +\infty$, a.s.,
(ii) $S_n \to -\infty$, a.s., or
(iii) $\liminf_{n \to \infty} S_n = -\infty$ and  $\limsup_{n \to \infty} S_n = +\infty$, a.s.,
corresponding to (i) $\cD = \{ +1\}$, (ii) $\cD = \{ -1\}$, and~(iii) $\cD = \{-1,+1\}$ (this latter case
includes the case where $S_n$ is recurrent). Our primary interest here is the case $d \geq 2$.

The following result is a consequence of a more general statement of
 Erickson~\cite{erickson76} (see also~\S\ref{sec:comments} below),
who pointed out that it can be obtained
by adapting an argument of Kesten~\cite{kesten70} (see also Lemma~1 of~\cite{kuelbs}
for a generalization attributed to Neidhardt). An alternative proof of
the fact that $L$ is deterministic could be obtained by appealing to a general
 zero--one result for random closed sets such as Proposition~1.1.30 of~\cite{molchanov},
having first established that $L$ is closed.

\begin{theorem}
\label{thm:D}
The set $\cD$ is a non-empty, closed subset of $\Sp{d-1}$, and $\Pr (L  = \cD) = 1$.
\end{theorem}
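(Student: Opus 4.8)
The plan is to prove three things in sequence: (a) $L$ is a.s.\ a closed set; (b) the event $\{\bu \in L\}$ has probability zero or one for each fixed $\bu$, and more globally the random closed set $L$ is a.s.\ deterministic; (c) identify that deterministic set with $\cD$ and check it is non-empty. For (a), note that $\hat S_n \in \Sp{d-1}$ for all $n$ with $S_n \neq \0$, and $\Sp{d-1}$ is compact; the set of subsequential limits of any sequence in a compact metric space is closed, so $L$ is automatically a closed (possibly empty) subset of $\Sp{d-1}$. This requires no probabilistic input.

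The heart of the matter is (b), the zero--one law, and this is where I expect the main work and the main obstacle to lie. The Hewitt--Savage zero--one law does not apply directly: the event $\{\bu \in L\} = \{\liminf_n \|\hat S_n - \bu\| = 0\}$ is \emph{not} exchangeable in the increments, because permuting $X_1,\dots,X_k$ changes the early trajectory and hence, a priori, the directions visited. Kesten's idea, as adapted by Erickson, is to exploit spatial homogeneity: shifting the walk by $S_k = S_k(X_1,\dots,X_k)$ leaves the \emph{tail} direction behaviour unchanged up to a vanishing correction. Concretely, fix $\bu$ and $\eps > 0$ and let $A_{\bu,\eps} := \{ \|\hat S_n - \bu\| < \eps \ \io \}$. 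One shows $A_{\bu,\eps}$ agrees, up to a null set, with an event in the exchangeable $\sigma$-field: given any permutation $\pi$ of $\{1,\dots,k\}$ acting on the first $k$ increments, the shifted walk $S'_n := S_n - S_k$ for $n \geq k$ equals $\sum_{j=k+1}^n X_j$, which is insensitive to $\pi$; and since $\|S_n\| \to \infty$ on the relevant event (or the walk returns to a bounded region i.o., a case one handles separately via Chung--Fuchs-type dichotomy), $\hat S_n - \widehat{S'_n} \to \0$, so $\bu \in L$ iff $\bu$ is a recurrent direction for $(S'_n)$. Letting $k \to \infty$ pushes the event into $\bigcap_k \sigma(X_{k+1}, X_{k+2}, \dots)$, the tail $\sigma$-field, whence Kolmogorov's zero--one law gives $\Pr(A_{\bu,\eps}) \in \{0,1\}$. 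The subtlety to get right is the case where $S_n$ does not escape to infinity in norm (e.g.\ a recurrent walk, or one with $\liminf \|S_n\| < \infty$ but $\limsup \|S_n\| = \infty$): here one must argue that the direction asymptotics along the escaping subsequence are still shift-stable, which is exactly the kind of careful bookkeeping Kesten's argument performs.

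Given the pointwise zero--one law, I would upgrade to the statement that $L$ is a.s.\ a fixed set. Let $\cD_0 := \{\bu \in \Sp{d-1} : \Pr(\bu \in L) = 1\}$; clearly $\cD_0 \subseteq \cD$ by definition, and conversely if $\bu \in \cD$ then $\Pr(\bu \in L)=1$ so $\bu \in \cD_0$, giving $\cD_0 = \cD$. Using that $A_{\bu,\eps} \downarrow \{\bu \in L\}$ as $\eps \downarrow 0$ and a countable-dense-subset argument on the sphere, one shows $\Pr(L = \cD) = 1$: for $\bu \notin \cD$ there is a rational $\eps$ with $\Pr(A_{\bu,\eps}) = 0$, and covering $\Sp{d-1} \setminus \cD$ (an open set, since $\cD$ is closed—this itself needs a line: $\cD$ is closed because $\bu \in \cD$ iff $\Pr(A_{\bu,\eps})=1$ for all $\eps$, and these events behave well under limits) by countably many such $\eps$-balls handles the complement; the reverse inclusion $\cD \subseteq L$ a.s.\ follows from a countable dense subset of $\cD$ together with closedness of $L$. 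Finally, $\cD \neq \emptyset$: $L \neq \emptyset$ a.s.\ because $\Sp{d-1}$ is compact, so $\hat S_n$ always has a subsequential limit—provided we rule out the degenerate possibility $S_n = \0$ eventually, which is excluded since the walk is genuinely $d$-dimensional with $d \geq 1$ and $\Pr(X=\0)<1$. Since $L \neq \emptyset$ a.s.\ and $L = \cD$ a.s., we get $\cD \neq \emptyset$. The alternative route flagged in the excerpt—invoking Molchanov's Proposition~1.1.30 on random closed sets once $L$ is known to be closed—would replace steps (b) and the upgrade by a black box, but I would present the Kesten--Erickson argument as the primary one since it is self-contained and explains \emph{why} the determinism holds.
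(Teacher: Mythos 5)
The central difficulty you identify is not actually there, and your workaround introduces a genuine gap. You claim the Hewitt--Savage zero--one law ``does not apply directly'' because permuting $X_1,\dots,X_k$ changes the early trajectory. But permuting the first $k$ increments leaves $S_n=\sum_{i=1}^n X_i$ \emph{unchanged for every $n\geq k$}, since the sum is symmetric in its summands; hence any event determined by the tail behaviour of the path, such as $A(\bu;r)=\{S_n\in C(\bu;r)\io\}$ or $\{\liminf_{n\to\infty}\|\hat S_n-\bu\|=0\}$, is invariant under all finite permutations of the increments, lies in the exchangeable $\sigma$-field, and Hewitt--Savage gives it probability $0$ or $1$ immediately. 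This is exactly how the paper proceeds: Hewitt--Savage is applied directly to the events $A(\bu;r)$ (and, for closedness of $\cD$, to the a.s.\ constancy of $\limsup_n(\hat S_n\cdot\bu)$, combined with the Lipschitz estimate of Lemma~\ref{lem:limsup}). Your substitute route --- recentring at $S_k$ and invoking Kolmogorov's zero--one law on the tail $\sigma$-field --- only identifies $\{\bu\in L\}$ with a tail event on the event $\{\|S_n\|\to\infty\}$; in the non-transient case you defer to ``careful bookkeeping'' without supplying it, and filling that hole requires the recurrence/transience dichotomy plus the fact that a recurrent genuinely $d$-dimensional walk has every direction recurrent (which rests on the group structure of the set of recurrent values, cf.\ Propositions~\ref{prop:recurrence} and~\ref{prop:recurrence-coverage}). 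So as written the zero--one step, which you rightly call the heart of the matter, is both misdiagnosed and incomplete. Your sketch of closedness of $\cD$ (``these events behave well under limits'') also needs the small argument it alludes to, namely that $\bu_j\to\bu$ with $\Pr(A(\bu_j;\eps/2))=1$ forces $\Pr(A(\bu;\eps))=1$, together with the equivalence of Proposition~\ref{prop:D-cones}.

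The remaining components are sound and close in spirit to the paper: your derivation of $\Pr(L\subseteq\cD)=1$ by covering the open set $\Sp{d-1}\setminus\cD$ with countably many rational balls of probability-zero recurrence, and of $\Pr(\cD\subseteq L)=1$ from a countable dense subset of $\cD$ plus closedness of $L$, is if anything a slightly leaner version of the paper's rational-ball bookkeeping; the non-emptiness argument via compactness of $\Sp{d-1}$ and $S_n\neq\0$ i.o.\ matches the paper's. But these do not rescue the zero--one step, which needs to be repaired as above before the proof is complete.
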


 We work towards the proof of Theorem~\ref{thm:D}. 
For $\bu \in \Sp{d-1}$ and $r > 0$, define the set
\[ C ( \bu ; r ) := \{ \bx \in \R^d \setminus \{ \0\} : \| \hat \bx - \bu \| < r \} \]
and the event
\[ A ( \bu; r) := \{ S_n \in C (\bu; r ) \text{ i.o.} \} .\]
By the Hewitt--Savage zero--one law (see e.g.~\cite[Theorem~4.1.1]{dur}), $\Pr ( A (\bu ; r) ) \in \{ 0, 1\}$.

Let  $B(\bx;r) := \{ \by \in \R^d : \| \bx - \by \| < r \}$ denote the open Euclidean ball centred at $\bx\in \R^d$ with radius $r > 0$, and
for $\bu \in \Sp{d-1}$
let $B_s (\bu ; r) := \Sp{d-1} \cap B ( \bu ; r)$. For $A \subseteq \R^d$, we write $\cl A$ for the closure of $A$ in $\R^d$ in the usual topology.

\begin{lemma}
\label{lem:cones}
 For any $\bu \in \Sp{d-1}$ and any $r >0$,
we have
\[ \{ L \cap B_s (\bu ; r) \neq \emptyset \} \subseteq A (\bu ; r) \subseteq \{ L \cap \cl B_s (\bu ; r) \neq \emptyset \} .\]
\end{lemma}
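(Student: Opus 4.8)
The plan is to unwind the definitions on both sides and relate ``$\hat S_n$ accumulates in a neighbourhood of $\bu$'' to ``$S_n$ visits the open cone $C(\bu;r)$ infinitely often.'' Observe first that for $\bx \neq \0$ we have $\bx \in C(\bu;r)$ if and only if $\hat\bx \in B_s(\bu;r)$, so the event $A(\bu;r)$ says precisely that $\hat S_n \in B_s(\bu;r)$ for infinitely many $n$ (recalling that $S_n = \0$ only finitely often, since the walk is genuinely $d$-dimensional and hence not identically zero; more carefully, either $\Pr(X=\0)<1$ so $S_n=\0$ i.o.\ has probability zero by Hewitt--Savage applied to a recurrence statement, or one simply notes $S_n\ne\0$ eventually because the walk leaves every bounded set — this minor point needs a line).

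For the first inclusion, $\{L \cap B_s(\bu;r) \neq \emptyset\} \subseteq A(\bu;r)$: suppose $\bv \in L \cap B_s(\bu;r)$. Then $\|\bv - \bu\| < r$, and since $B_s(\bu;r)$ is relatively open in $\Sp{d-1}$, there is $\delta>0$ with $B_s(\bv;\delta) \subseteq B_s(\bu;r)$. Because $\bv \in L$, we have $\hat S_n \in B_s(\bv;\delta)$ infinitely often, hence $\hat S_n \in B_s(\bu;r)$ infinitely often, i.e.\ the event $A(\bu;r)$ occurs. For the second inclusion, $A(\bu;r) \subseteq \{L \cap \cl B_s(\bu;r) \neq \emptyset\}$: on the event $A(\bu;r)$ the sequence $\hat S_n$ lies in the closed set $\cl B_s(\bu;r) \subseteq \Sp{d-1}$ infinitely often; since $\Sp{d-1}$ is compact, this subsequence has a further convergent subsequence with limit $\bv \in \cl B_s(\bu;r)$, and by definition $\bv \in L$. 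Hence $L \cap \cl B_s(\bu;r) \neq \emptyset$. All of this holds pointwise (surely) on the relevant events, so no probabilistic input beyond the identification above is needed.

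I do not expect a genuine obstacle here — the statement is essentially a topological bookkeeping exercise. The one place requiring minor care is the handling of the excluded accumulation point at $\0$: the definition of $L$ discards any subsequential limit equal to $\0$, and the cone $C(\bu;r)$ already excludes $\0$, so one should make sure that ``$\hat S_n \to \bv$ along a subsequence'' with $\bv \in \Sp{d-1}$ is not disrupted by finitely many (or, in the degenerate-free setting, almost surely finitely many) indices with $S_n = \0$; discarding those indices is harmless. A second small point: the asymmetry between open $B_s(\bu;r)$ on the left and closed $\cl B_s(\bu;r)$ on the right is exactly what one expects, since a limit point of points in an open set can land on the boundary, while the neighbourhood argument for the left inclusion needs room to shrink — so the inclusions cannot in general be tightened, and no attempt should be made to do so.
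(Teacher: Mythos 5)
Your argument is correct and is essentially the paper's proof: identify $A(\bu;r)$ with $\{\hat S_n \in B_s(\bu;r) \io\}$, then use openness of $B_s(\bu;r)$ in $\Sp{d-1}$ for the first inclusion and compactness of $\cl B_s(\bu;r)$ for the second. One correction: your parenthetical claim that $S_n=\0$ occurs only finitely often is false in general (simple symmetric random walk on $\Z^2$ is genuinely two-dimensional and returns to $\0$ infinitely often a.s.), but no such claim is needed --- since $\hat\0=\0\notin\Sp{d-1}$, an index with $S_n=\0$ satisfies neither $S_n\in C(\bu;r)$ nor $\hat S_n\in B_s(\bu;r)$, so the identification $A(\bu;r)=\{\hat S_n\in B_s(\bu;r) \io\}$ is exact pointwise with no probabilistic input, and the rest of your argument goes through unchanged.
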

\begin{proof}
First note that
\[  A ( \bu ; r ) = \{ \hat S_n \in B_s (\bu ; r) \io \} . \]
Hence $A (\bu; r)$ implies that $\hat S_n \in \cl B_s ( \bu ; r)$ i.o.,
and since $\cl B_s ( \bu ; r)$ is compact, $\hat S_n$ must have an accumulation point
in $\cl B_s ( \bu ; r)$. On the other hand,
if $\hat S_n$ has an accumulation point in $B_s (\bu ; r)$,
then since $B_s (\bu ; r)$ is open in $\Sp{d-1}$ we have $\hat S_n \in B_s (\bu ; r)$ i.o.
\end{proof}

The following continuity property is a key ingredient in the proof of Theorem~\ref{thm:D}.

\begin{lemma}
\label{lem:limsup}
Given any sequence $\bx_1, \bx_2, \ldots \in \Sp{d-1}$,
and any $\bu, \bv \in \R^d$,
\[ \left| \limsup_{n \to \infty} ( \bx_n  \cdot \bu ) - \limsup_{n \to \infty} ( \bx_n  \cdot \bv ) \right| \leq \| \bu - \bv \| .\]
\end{lemma}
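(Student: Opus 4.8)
The plan is to reduce the statement to the Cauchy--Schwarz inequality together with two elementary properties of $\limsup$: it is monotone, and adding a constant to every term of a sequence shifts its $\limsup$ by that constant. In effect the claim just says that the map $\bw \mapsto \limsup_{n\to\infty}(\bx_n\cdot\bw)$ is $1$-Lipschitz on $\R^d$.

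First I would check that the left-hand side is well-defined. Since each $\bx_n\in\Sp{d-1}$, Cauchy--Schwarz gives $|\bx_n\cdot\bw|\le\|\bx_n\|\,\|\bw\|=\|\bw\|$ for every $\bw\in\R^d$; hence both sequences $(\bx_n\cdot\bu)_{n\ge1}$ and $(\bx_n\cdot\bv)_{n\ge1}$ are bounded, so their $\limsup$s are finite real numbers and their difference makes sense.

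Next, for each $n$ write $\bx_n\cdot\bu=(\bx_n\cdot\bv)+\bx_n\cdot(\bu-\bv)$ and bound the last term by $\|\bu-\bv\|$, again by Cauchy--Schwarz and $\|\bx_n\|=1$. This yields $\bx_n\cdot\bu\le(\bx_n\cdot\bv)+\|\bu-\bv\|$ for every $n$. Taking $\limsup_{n\to\infty}$ of both sides, and using monotonicity of $\limsup$ together with the fact that adding the constant $\|\bu-\bv\|$ commutes with $\limsup$, gives $\limsup_{n\to\infty}(\bx_n\cdot\bu)\le\limsup_{n\to\infty}(\bx_n\cdot\bv)+\|\bu-\bv\|$, i.e.\ $\limsup_{n\to\infty}(\bx_n\cdot\bu)-\limsup_{n\to\infty}(\bx_n\cdot\bv)\le\|\bu-\bv\|$.

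Finally, the roles of $\bu$ and $\bv$ are symmetric, so repeating the previous step with $\bu$ and $\bv$ interchanged gives the reverse bound $\limsup_{n\to\infty}(\bx_n\cdot\bv)-\limsup_{n\to\infty}(\bx_n\cdot\bu)\le\|\bu-\bv\|$. Combining the two inequalities yields the claimed bound on the absolute value. I do not anticipate any genuine obstacle here; the only point needing a word of care is the finiteness of the two $\limsup$s, which is immediate from $\|\bx_n\|=1$, and the routine (but worth stating) fact that $\limsup$ respects the pointwise inequality $\bx_n\cdot\bu\le(\bx_n\cdot\bv)+\|\bu-\bv\|$.
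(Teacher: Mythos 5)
Your proof is correct and follows essentially the same route as the paper: the same decomposition $\bx_n\cdot\bu=(\bx_n\cdot\bv)+\bx_n\cdot(\bu-\bv)$, the Cauchy--Schwarz bound using $\|\bx_n\|=1$, and the symmetric argument for the reverse inequality. The only cosmetic difference is that you apply the bound pointwise before taking $\limsup$, whereas the paper invokes subadditivity of $\limsup$ directly; these amount to the same thing.
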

\begin{proof}
Suppose that $\bu, \bv \in \R^d$.
Then
\begin{align*} \limsup_{n \to \infty} ( \bx_n  \cdot \bv )
& \leq \limsup_{n \to \infty} ( \bx_n \cdot \bu )
+ \limsup_{n \to \infty} ( \bx_n \cdot ( \bv - \bu ) )\\
& \leq  \limsup_{n \to \infty} ( \bx_n \cdot \bu ) + \| \bv - \bu \| ,\end{align*}
since $\| \bx_n \| = 1$. With a  similar argument in the other direction, we get
the result.
\end{proof}

\begin{lemma}
\label{lem:closed}
The set $\cD$ is closed in $\Sp{d-1}$.
\end{lemma}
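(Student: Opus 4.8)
The plan is to prove closedness by the sequential criterion: take any sequence $\bu_1, \bu_2, \ldots \in \cD$ with $\bu_k \to \bu \in \Sp{d-1}$, and show $\bu \in \cD$. The key point is that, although membership in $\cD$ is defined via an almost-sure event that \emph{a priori} depends on the direction, the relevant events need only be controlled along the \emph{countable} family $\{ \bu_k \}$, so the almost-sure quantifier can be pulled past the limit in $k$.

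First I would record that, for each $k$, the definition of $\cD$ gives $\Pr ( \liminf_{n \to \infty} \| \hat S_n - \bu_k \| = 0 ) = 1$, so by countable subadditivity there is a single almost-sure event $\Omega_0$ on which $\liminf_{n \to \infty} \| \hat S_n - \bu_k \| = 0$ simultaneously for every $k \geq 1$. Work on $\Omega_0$. Fix $\eps > 0$ and choose $k$ with $\| \bu_k - \bu \| < \eps$. Since $\liminf_{n} \| \hat S_n - \bu_k \| = 0$, there are infinitely many $n$ with $\| \hat S_n - \bu_k \| < \eps$, and for each such $n$ the triangle inequality gives $\| \hat S_n - \bu \| \leq \| \hat S_n - \bu_k \| + \| \bu_k - \bu \| < 2 \eps$. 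Hence $\liminf_{n \to \infty} \| \hat S_n - \bu \| \leq 2\eps$ on $\Omega_0$. Letting $\eps \downarrow 0$ yields $\liminf_{n \to \infty} \| \hat S_n - \bu \| = 0$ on $\Omega_0$, i.e., almost surely; thus $\bu \in \cD$, and $\cD$ is closed.

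An alternative route, which I would mention as a remark, avoids introducing $\Omega_0$ explicitly: combining Lemma~\ref{lem:cones} with the Hewitt--Savage zero--one law shows that $\bu \in \cD$ if and only if $\Pr ( A ( \bu ; r) ) = 1$ for every $r > 0$ (using that $A(\bu;r)$ is decreasing in $r$ and that $\{\bu \in L\} = \bigcap_{m \geq 1} A(\bu ; 1/m)$). Given $\bu_k \to \bu$ in $\cD$ and $r > 0$, pick $k$ with $\| \bu_k - \bu \| < r/2$; then $C ( \bu_k ; r/2 ) \subseteq C ( \bu ; r)$, hence $A ( \bu_k ; r/2) \subseteq A ( \bu ; r)$, and since $\Pr ( A ( \bu_k ; r/2) ) = 1$ we get $\Pr ( A ( \bu ; r) ) = 1$; as $r > 0$ was arbitrary, $\bu \in \cD$.

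There is essentially no hard step here — the whole content is the interchange of the almost-sure quantifier with a limit over a countable set of directions, legitimate precisely because $\{ \bu_k\}$ is countable; the triangle-inequality bookkeeping with $\eps$ and $2\eps$ (or $r/2$ and $r$) is routine. The only point warranting a moment's care is the identity $\{ \bu \in L\} = \bigcap_{m} A(\bu; 1/m)$ used in the alternative argument, which follows directly from the definitions of $L$ and $A(\bu;r)$ together with the reformulation $A(\bu;r) = \{ \hat S_n \in B_s(\bu;r) \io\}$ noted in the proof of Lemma~\ref{lem:cones}.
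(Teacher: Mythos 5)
Your proof is correct, but it takes a genuinely different route from the paper's. The paper proves that the complement $\Sp{d-1}\setminus\cD$ is open: it rewrites $\liminf_{n\to\infty}\|\hat S_n-\bu\|=0$ as $\limsup_{n\to\infty}(\hat S_n\cdot\bu)=1$, invokes the Hewitt--Savage zero--one law so that for $\bu\notin\cD$ the quantity $\limsup_{n\to\infty}(\hat S_n\cdot\bu)$ is an a.s.\ constant $c<1$, and then uses the Lipschitz continuity of $\bv\mapsto\limsup_{n\to\infty}(\hat S_n\cdot\bv)$ (Lemma~\ref{lem:limsup}) to propagate $c<1$ to a whole neighbourhood of $\bu$. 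You instead verify sequential closedness directly: for $\bu_k\to\bu$ with each $\bu_k\in\cD$ you intersect the countably many probability-one events and transfer the accumulation from $\bu_k$ to $\bu$ by the triangle inequality. This is more elementary --- your main argument needs neither Hewitt--Savage nor Lemma~\ref{lem:limsup}; the zero--one law is essential in the paper only because arguing on the complement requires upgrading ``not a.s.\ equal to zero'' to a uniform constant $c<1$, a step your direction of argument never meets. Your alternative remark is essentially Proposition~\ref{prop:D-cones} (proved later in the paper, independently of Theorem~\ref{thm:D}) combined with the inclusion $A(\bu_k;r/2)\subseteq A(\bu;r)$ when $\|\bu_k-\bu\|<r/2$; the only slip there is the parenthetical claim that $A(\bu;r)$ is decreasing in $r$ --- it is increasing in $r$, since the cone $C(\bu;r)$ grows with $r$ --- but nothing in your argument depends on that, as you only use the inclusion just quoted and the identity $\{\bu\in L\}=\bigcap_{m\geq 1}A(\bu;1/m)$, both of which are correct.
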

\begin{proof}
Note that for any $\bu \in \Sp{d-1}$,
\[
 \| \hat S_n - \bu \|^2 = (\hat S_n - \bu ) \cdot (\hat S_n - \bu  ) = 1 + \1{ S_n \neq \0} - 2 \hat S_n \cdot \bu ,\]
so that
\[
\liminf_{n \to \infty} \| \hat S_n - \bu \| =0 \text{ if and only if } \limsup_{n \to \infty} (\hat S_n \cdot \bu ) = 1. \]
Thus
\[ \Sp{d-1} \setminus \cD = \bigl\{ \bu \in \Sp{d-1} : \limsup_{n \to \infty} ( \hat S_n \cdot \bu ) < 1 \as \bigr\} .\]
Consider $\bu \in \Sp{d-1} \setminus \cD$. By the Hewitt--Savage theorem,
$\limsup_{n \to \infty} ( \hat S_n \cdot \bu ) = c$ a.s.~for a constant $c <1$.
Lemma~\ref{lem:limsup} shows that for any $\bv \in \Sp{d-1}$ with $\| \bu - \bv \| \leq \frac{1-c}{2}$, a.s.,
\[ \limsup_{n \to \infty} ( \hat S_n \cdot \bv ) \leq c + \frac{1-c}{2} = \frac{1+c}{2} <1 , \]
so that $\bv \in \Sp{d-1} \setminus \cD$.
Thus $\Sp{d-1} \setminus \cD$ is open in $\Sp{d-1}$.
\end{proof}

Now we can complete the proof of Theorem~\ref{thm:D}.

\begin{proof}[Proof of Theorem~\ref{thm:D}.]
We adapt, in part, an argument from the proof of Theorem~1 of~\cite{kesten70}.
We call a ball $B_s (\bu ; r)$ \emph{rational} if 
$\bu \in \Sp{d-1} \cap \Q^d$ and $r \in \Q \cap (0,\infty)$.
Note that $\Sp{d-1} \cap \Q^d$ is dense in $\Sp{d-1}$, as follows from an argument based on stereographic projection (see e.g.~\cite{schmutz}).
Let $\cR$ denote the (countable) set of all rational balls, and set
\[ \cC := \{   B \in \cR : \Pr ( \hat S_n \in B \io ) =1 \} .\]
Then since $\cR$ is countable, and, by the Hewitt--Savage
theorem, $\Pr ( \hat S_n \in B \io ) \in \{0,1\}$
for any $B \in \cR$, we have
\begin{equation}
\label{eq0}
 \Pr ( \hat S_n \in B \io \text{ for all } B \in \cC \text{ but for no } B \in \cR \setminus \cC ) = 1.
\end{equation}
Observe that
\begin{equation}
\label{eq1a}
\bu \in L \text{ if and only if }
\hat S_n \in B \io \text{ for every } B \in \cR \text{ with } \bu \in B, 
\end{equation}
and so
 $\bu \in \cD$ if and only if
\begin{equation}
\label{eq1}
 \Pr ( \hat S_n \in B \io \text{ for every } B \in \cR \text{ with } \bu \in B ) = 1 . \end{equation}
In particular, if  $B \in \cR$ contains some $\bu \in \cD$, then $B \in \cC$.
With~\eqref{eq0}, this means that
\[ \Pr ( \text{for all } \bu \in \cD, \, \hat S_n \in B \io \text{ for every } B \in \cR \text{ with } \bu \in B ) = 1.\]
Together with~\eqref{eq1a}, it follows that $\Pr ( \cD \subseteq L ) =1$.
 
Let $\cC_k$ be the set of $B \in \cC$ with $\diam B < 1/k$.
Let $W_k := \cup \cC_k$ and $W := \cap_{k \in \N} W_k$.
Then it follows from~\eqref{eq1} that $\bu \in \cD$ if and only
if for every $k \in \N$ there exists some $B \in \cC_k$ with $\bu \in B$.
That is, $\bu \in \cD$ if and only if $\bu \in W$, i.e., $\cD = W$.

Let $\cR_k$ be the set of $B \in \cR$ with $\diam B < 1/k$.
Now let $M_k := \cup \{ B \in \cR_k : L  \cap B \neq \emptyset \}$.
Let $B \in \cR$. Since $B$ is open in $\Sp{d-1}$, we have that
$B \cap L  \neq \emptyset$
implies that $\hat S_n \in B$ i.o.
So $M_k \subseteq \cup \{ B \in \cR_k : \hat S_n \in B \io \}$.
Hence by~\eqref{eq0} we have that
$\Pr ( M_k \subseteq \cup \cC_k) =1$, i.e., $\Pr ( M_k \subseteq W_k ) =1$.
It follows that $\Pr ( \cap_{k \in \N} M_k \subseteq \cD ) =1$.
Note that if $\bu \in L$, then for all $k \in \N$ we have $B \cap L \neq \emptyset$ for some $B \in \cR_k$,
so $\bu \in M_k$ for all $k$; hence $L \subseteq \cap_{k\in \N} M_k$ a.s.
Hence we conclude that $\Pr ( L \subseteq \cD ) =1$.

To prove that $\cD$ is non-empty,
taking $r=2$ in Lemma~\ref{lem:cones} shows that $\hat S_n$ has
at least one accumulation point in $L$, since $C (\bu; 2 ) = \R^d \setminus \{ \0 \}$
and, since $S_n$ is genuinely $d$-dimensional, $S_n \neq \0$ i.o., a.s.
\end{proof}

Here is an alternative characterization of the set $\cD$.

\begin{proposition}
\label{prop:D-cones}
We have that 
\[ \cD = \{\bu\in\Sp{d-1}: \Pr( A(\bu;r))=1 \text{ for all } r>0\}. \]
\end{proposition}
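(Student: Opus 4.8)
The plan is to read the proposition off from the observation, already recorded in the proof of Lemma~\ref{lem:cones}, that $A(\bu;r) = \{\hat S_n \in B_s(\bu;r) \io\}$, together with the elementary fact that, for a fixed $\bu \in \Sp{d-1}$, the event $\{\liminf_{n\to\infty} \|\hat S_n - \bu\| = 0\}$ is the countable intersection of the events $A(\bu;1/k)$, $k \in \N$. Once that set identity is in place, everything reduces to the fact that a countable intersection of probability-one events has probability one, plus monotonicity of $A(\bu;r)$ in $r$.

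First I would note that $A(\bu;r)$ is non-decreasing in $r$, and establish the identity
\[ \Bigl\{ \liminf_{n\to\infty} \|\hat S_n - \bu\| = 0 \Bigr\} = \bigcap_{k\in\N} A(\bu;1/k) . \]
The only point needing a little care here is that $\|\hat S_n - \bu\| < 1/k$ holds if and only if $\hat S_n \in B_s(\bu;1/k)$ (equivalently $S_n \in C(\bu;1/k)$): since $k \geq 1$ gives $1/k \leq 1 = \|\0 - \bu\|$, the inequality automatically rules out $S_n = \0$, so the possible accumulation point at $\0$ plays no role, exactly as in the definition of $L$. Granting this, $\liminf_{n\to\infty}\|\hat S_n - \bu\| = 0$ says precisely that for every $k \in \N$ one has $\hat S_n \in B_s(\bu;1/k)$ for infinitely many $n$, which is the displayed intersection.

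For the inclusion $\cD \subseteq \{\bu : \Pr(A(\bu;r))=1 \text{ for all } r>0\}$: if $\bu \in \cD$, then by definition the event on the left-hand side of the identity has probability $1$, hence so does each $A(\bu;1/k)$; and for arbitrary $r>0$ one picks $k$ with $1/k \leq r$ and uses $A(\bu;1/k) \subseteq A(\bu;r)$ to conclude $\Pr(A(\bu;r)) = 1$. For the reverse inclusion: if $\Pr(A(\bu;r))=1$ for every $r>0$, then in particular $\Pr(A(\bu;1/k)) = 1$ for all $k \in \N$, so the countable intersection $\bigcap_{k\in\N} A(\bu;1/k)$ again has probability $1$; by the identity this is exactly the statement $\bu \in \cD$.

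I do not expect any real obstacle: the argument is pure bookkeeping once the set identity is set up, and the only subtlety is the treatment of the value $S_n = \0$ when passing between the norm inequality and the cone $C(\bu;r)$, which is dealt with by restricting to radii $r \leq 1$ as above — harmless, since $\Pr(A(\bu;r))=1$ for all $r \in (0,1]$ already forces the same for all $r>0$ by monotonicity.
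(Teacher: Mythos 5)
Your proof is correct. One half coincides with the paper's argument: you intersect the countably many probability-one events $A(\bu;1/k)$ and use your set identity to conclude $\liminf_{n\to\infty}\|\hat S_n-\bu\|=0$ a.s., exactly as in the paper's proof that $\cD'\subseteq\cD$. For the other inclusion you take a slightly different, and in fact more elementary, route. The paper argues by contraposition: if $\Pr(A(\bu;r))<1$ for some $r$, the Hewitt--Savage zero--one law upgrades this to $\Pr(A(\bu;r))=0$, and Lemma~\ref{lem:cones} then yields $\Pr(L\cap B_s(\bu;r)=\emptyset)=1$, hence $\bu\notin\cD$. You instead observe that, via the identity $\{\liminf_n\|\hat S_n-\bu\|=0\}=\bigcap_k A(\bu;1/k)$ and monotonicity of $A(\bu;r)$ in $r$, the defining event of membership in $\cD$ is itself contained in every $A(\bu;r)$, so $\bu\in\cD$ forces $\Pr(A(\bu;r))=1$ directly, with no appeal to the zero--one law or to Lemma~\ref{lem:cones}. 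What the paper's route buys is a reusable observation ($\Pr(A(\bu;r))\in\{0,1\}$ and its link to $L$) that it exploits elsewhere; what yours buys is brevity and fewer ingredients. Your treatment of the $S_n=\0$ issue (restricting to radii $1/k\le 1$, where $\|\hat\0-\bu\|=1$ rules out the origin, then recovering general $r$ by monotonicity) is exactly the care needed given the convention $\hat\0=\0$, so there is no gap.
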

\begin{proof}
Define the set $\cD' = \{\bu\in\Sp{d-1}: \Pr( A(\bu;r))=1 \text{ for all } r>0\}$.
If $\bu\in\cD'$, then $\Pr(A(\bu;1/m))=1$ for all $m \in \N$, and so 
 $\Pr(\cap_{m=1}^{\infty}A(\bu;1/m))=1$. In particular,
$\Pr ( \hat S_n \in B_s (\bu;1/m) \io \text{ for all } m\in\N ) =1$. 
In other words, a.s.,
 $\liminf_{n\to \infty}\|\hat S_n-\bu\|<1/m$ for all $m\in \N$, and hence $\liminf_{n\to\infty}\|\hat S_n -\bu\|=0$, a.s., so $\bu\in\cD$. 
Thus $\cD'\subseteq\cD$.

On the other hand, suppose that $\bu\in \Sp{d-1} \setminus \cD'$. Then there exists $r>0$ such that $\Pr(A(\bu;r))<1$, and, by the Hewitt--Savage theorem, 
in fact $\Pr(A(\bu;r))=0$.   Lemma~\ref{lem:cones} shows that
$A(\bu;r)^\rc\subseteq\{L\cap B_s(\bu;r)=\emptyset\}$ and hence $\Pr(L\cap B_s(\bu;r)=\emptyset)=1$.
In particular, this means that $\Pr(\bu\in L)=0$ and so $\bu\notin \cD$. This shows that $\cD\subseteq\cD'$.
\end{proof}

We next show that the recurrent directions are determined solely
by the behaviour of the walk at increasingly large distances from the origin.
Define 
\begin{equation}
\label{eq:asymptotic-directions}
 L_\infty := \left\{ \bu \in \Sp{d-1} : \liminf_{n \to \infty} \left( \frac{1}{1+\| S_n \|} + \| \hat S_n - \bu \| \right) = 0  \right\} , \end{equation}
and  
\[ \cD_\infty :=  \left\{ \bu \in \Sp{d-1} : \liminf_{n \to \infty} \left( \frac{1}{1+\| S_n \|} + \| \hat S_n - \bu \| \right) = 0, \as  \right\} .\]
In other words, $\bu \in L_\infty$ if and only if 
there exists a (random) subsequence $n_k$ of $\ZP$ such that both $\lim_{k \to \infty} \| S_{n_k} \| = \infty$
and
$\lim_{k \to \infty} \hat S_{n_k} = \bu$. 
If $\bu \in L_\infty$ we say that $\bu$ is an \emph{asymptotic direction} for the random walk.
Clearly an asymptotic direction is a recurrent direction, so
$\Pr( L_\infty \subseteq L ) = 1$ and $\cD_\infty \subseteq \cD$.

\begin{proposition}
\label{prop:recurrence}
If $S_n$ is recurrent, then $\cD = \cD_\infty = \Sp{d-1}$ and $\Pr ( L = L_\infty = \Sp{d-1} ) = 1$.
\end{proposition}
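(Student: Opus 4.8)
The plan is to show that if $S_n$ is recurrent, then every $\bu \in \Sp{d-1}$ is an asymptotic direction almost surely; since $L_\infty \subseteq L$ and $\cD_\infty \subseteq \cD \subseteq \Sp{d-1}$ always, this forces all four sets to equal $\Sp{d-1}$. Fix $\bu \in \Sp{d-1}$ and $r > 0$. The key observation is that the cone $C(\bu; r)$ is a set of infinite Lebesgue measure which is \emph{unbounded}, and recurrence of the genuinely $d$-dimensional walk $S_n$ should imply that $S_n$ enters any such cone infinitely often \emph{at arbitrarily large radius}. Concretely, I would argue that recurrence (in the Chung--Fuchs sense, $\liminf_n \|S_n\| = 0$ a.s., or equivalently $S_n$ returns to every neighbourhood of $\0$ i.o.) combined with the Hewitt--Savage zero--one law gives $\Pr(A(\bu; r)) = 1$, and moreover that the visits to $C(\bu;r)$ can be taken with $\|S_n\| \to \infty$.

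The cleanest route to the ``arbitrarily large radius'' refinement is the following. For each $R > 0$ and each $\bu, r$, consider the truncated cone $C(\bu;r) \cap \{\bx : \|\bx\| > R\}$. I want to show $S_n$ enters this set i.o., a.s. One way: pick a point $\bx_0$ with $\hat\bx_0 = \bu$ and $\|\bx_0\|$ large, say $\|\bx_0\| = 2R$; then a whole ball $B(\bx_0; \delta)$ around it lies inside $C(\bu; r) \cap \{\|\bx\| > R\}$ for $\delta$ small. Now use that a recurrent random walk on $\R^d$ that is genuinely $d$-dimensional has the property that the set of points it visits i.o. (in the sense of entering every ball around them i.o.) is all of $\R^d$ — this is itself a consequence of Hewitt--Savage applied to the events $\{S_n \in B(\bx_0;\delta) \io\}$ together with the fact that recurrence propagates: if $\0$ is visited i.o.\ then so is any $\bx$ in the (closed) group generated by the support, which for a genuinely $d$-dimensional recurrent walk is all of $\R^d$. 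Hence $\Pr(S_n \in B(\bx_0;\delta) \io) = 1$, which gives $S_n \in C(\bu;r) \cap \{\|\bx\| > R\}$ i.o., a.s. Intersecting over $R \in \N$ and over a countable family of $r = 1/m$, $m \in \N$, yields $\bu \in \cD_\infty$ for this fixed $\bu$.

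To finish, I would deduce $\cD_\infty = \Sp{d-1}$: this does not immediately follow from ``$\bu \in \cD_\infty$ for each fixed $\bu$'' by an uncountable union, but since $\cD_\infty$ is closed (by the same argument as Lemma~\ref{lem:closed}, or simply because $\cD_\infty \subseteq \cD$ and one can check $\cD_\infty$ is closed directly, or because $\cD_\infty = \cD$ here) and contains the dense set $\Sp{d-1} \cap \Q^d$ — applying the previous paragraph to each rational direction — we get $\cD_\infty = \Sp{d-1}$. Then $\Sp{d-1} = \cD_\infty \subseteq \cD \subseteq \Sp{d-1}$ forces $\cD = \cD_\infty = \Sp{d-1}$, and by Theorem~\ref{thm:D} (and its analogue/the inclusion $L_\infty \subseteq L$) we get $\Pr(L_\infty = \Sp{d-1}) = 1$; since $L_\infty \subseteq L \subseteq \Sp{d-1}$ always, also $\Pr(L = \Sp{d-1}) = 1$.

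The main obstacle I anticipate is the step asserting that a recurrent, genuinely $d$-dimensional random walk visits (every neighbourhood of) \emph{every} point of $\R^d$ infinitely often. The standard Chung--Fuchs theory gives recurrence as $\liminf_n \|S_n\| = 0$; upgrading this to ``the set of recurrent points is a closed subgroup, hence all of $\R^d$ in the genuinely $d$-dimensional lattice-free case, or a full-dimensional lattice otherwise'' requires a little care — in the lattice case $\R^d$ should be replaced by the lattice, but since the lattice is still unbounded in every direction the cone argument goes through unchanged. I would handle this by citing the structure theorem for the set of recurrent values of a random walk (e.g.\ from Spitzer or Chung--Fuchs), or by giving the short Hewitt--Savage argument that $\{S_n \in B(\bx;\delta)\io\}$ has probability $0$ or $1$ and probability $1$ whenever $\bx$ is in the support of $S_N$ for some $N$ and the walk is recurrent, then noting these $\bx$ are dense in a set unbounded in every direction.
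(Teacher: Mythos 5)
Your core mechanism is the same as the paper's: use the structure of the set of recurrent values (a closed subgroup of $\R^d$ which, by genuine $d$-dimensionality, contains a full-rank lattice and hence meets every truncated cone $C(\bu;r)\cap\{\bx:\|\bx\|>R\}$), together with the fact that every neighbourhood of every recurrent value is visited i.o. The paper packages exactly this as Proposition~\ref{prop:recurrence-coverage}: there is a single radius $h$ and a single a.s.\ event on which $S_n\in B(\bx;h)$ i.o.\ for \emph{every} $\bx\in\R^d$ simultaneously. Note that ``the lattice is unbounded in every direction'' is not quite the property you need; what you need is finite covering radius (relative denseness), i.e.\ a uniform $h$, so that every truncated cone with $R$ large is guaranteed to contain a ball about a recurrent value --- that is precisely what the appendix proposition extracts.

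The genuine gap is in your final step, the claim $\Pr(L_\infty=\Sp{d-1})=1$. Theorem~\ref{thm:D} concerns $L$, so once $\cD=\Sp{d-1}$ it yields $\Pr(L=\Sp{d-1})=1$, but the inclusion $L_\infty\subseteq L$ points the wrong way for deducing anything about $L_\infty$; and the ``analogue'' you invoke for $L_\infty$ is Theorem~\ref{thm:D-infty}, whose recurrent case is proved in the paper \emph{from} this very proposition, so citing it here would be circular (and you have not proved such an analogue independently). Relatedly, your detour through closedness and density to get $\cD_\infty=\Sp{d-1}$ is unnecessary: $\cD_\infty$ is a deterministic set, so ``each fixed $\bu$ lies in $\cD_\infty$'' already gives $\cD_\infty=\Sp{d-1}$; the only place where simultaneity over uncountably many $\bu$ genuinely matters is the random set $L_\infty$ (and $L$). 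The repair lies inside your own argument: for each $\bu$ in the countable dense set $\Sp{d-1}\cap\Q^d$ and each $m,R\in\N$ you have shown that a.s.\ there are infinitely many $n$ with $\|\hat S_n-\bu\|<1/m$ and $\|S_n\|>R$; intersect these countably many a.s.\ events, and on the resulting event, given any $\bv\in\Sp{d-1}$ and $\eps>0$, choose a rational $\bu$ with $\|\bu-\bv\|<\eps/2$ and $1/m<\eps/2$ to conclude $\bv\in L_\infty$. This gives $\Pr(L_\infty=\Sp{d-1})=1$ directly, and then $L_\infty\subseteq L\subseteq\Sp{d-1}$ gives $\Pr(L=\Sp{d-1})=1$ without any appeal to Theorem~\ref{thm:D} --- which is in effect how the paper's single-event coverage statement is used.
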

\begin{proof}
Suppose that $S_n$ is recurrent.
Since $\cD_\infty \subseteq \cD$ and $L_\infty \subseteq L$, 
it suffices to show that $\cD_\infty = \Sp{d-1}$ and $\Pr ( L_\infty = \Sp{d-1} ) = 1$.
  Proposition~\ref{prop:recurrence-coverage} shows that there is some $h \in (0,\infty)$ such that,
	a.s., for every $\bx \in \R^d$,
	$S_n \in B ( \bx ; h)$ i.o. But for every $\bu \in \Sp{d-1}$, every $r>0$, and every $R \in (h,\infty)$,
	$C ( \bu ; r)$ contains some $B ( \bx ; h)$ with $\| \bx \| > 2 R$, so that, a.s.,
	 for every $\bu \in \Sp{d-1}$, every $r>0$, and every $R \in (h,\infty)$, 
	there is a subsequence $n_k$ along which
	$\| \hat S_{n_k} - \bu \| < r$ and $\| S_{n_k} \| > R$. This shows that $\Pr ( L_\infty = \Sp{d-1} ) = 1$,
	and essentially the same argument implies that $\cD_\infty = \Sp{d-1}$.
\end{proof}

\begin{corollary}
\label{cor:not-full-sphere}
If $\cD \neq \Sp{d-1}$, then $S_n$ is transient.
\end{corollary}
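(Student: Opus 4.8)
The plan is to obtain this simply as the contrapositive of Proposition~\ref{prop:recurrence}, together with the standard recurrence/transience dichotomy for random walks. Recall that $(S_n)$ is either \emph{recurrent}, in the sense that $S_n \in B(\0;\eps)$ i.o., a.s., for every $\eps > 0$, or else \emph{transient}, meaning $\| S_n \| \to \infty$, a.s.\ (see e.g.~\cite[Theorem~4.2.1]{dur}). So it suffices to rule out recurrence under the hypothesis $\cD \neq \Sp{d-1}$.

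First I would argue by contradiction: assume $S_n$ is recurrent. Then Proposition~\ref{prop:recurrence} applies directly and gives $\cD = \Sp{d-1}$, contradicting the assumption $\cD \neq \Sp{d-1}$. Hence $S_n$ is not recurrent, and by the dichotomy above it is transient, which is the claim.

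There is essentially no obstacle here, since all of the substance is already carried by Proposition~\ref{prop:recurrence} (whose own proof rests on the coverage statement Proposition~\ref{prop:recurrence-coverage}). The one point worth flagging explicitly is the appeal to the recurrence/transience dichotomy: it is what allows us to upgrade ``not recurrent'' to the stronger conclusion ``transient'', and this is the only place where a genuinely probabilistic input about random walks, rather than the topological bookkeeping of the earlier lemmas, is needed.
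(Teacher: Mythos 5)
Your proof is correct and is exactly the argument the paper intends: the corollary is the contrapositive of Proposition~\ref{prop:recurrence}, with the standard recurrence/transience dichotomy supplying the upgrade from ``not recurrent'' to ``transient''.
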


The next result says that, a.s., the sets of recurrent and asymptotic directions coincide. 

\begin{theorem}
\label{thm:D-infty}
We have $\cD_\infty = \cD$, and $\Pr ( L_\infty = \cD ) = 1$.
\end{theorem}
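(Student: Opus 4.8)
The plan is to reduce the whole statement to the single assertion $\Pr(L_\infty = \cD) = 1$, from which the rest is formal. Indeed, $\cD_\infty \subseteq \cD$ has already been observed; conversely, if $\bu \in \cD$, then since $\Pr(L_\infty = \cD) = 1$ we get $\bu \in L_\infty$ a.s., which is exactly the defining condition for $\bu \in \cD_\infty$, so $\cD \subseteq \cD_\infty$ and hence $\cD_\infty = \cD$. Moreover, since $\Pr(L_\infty \subseteq L) = 1$ and $\Pr(L = \cD) = 1$ by Theorem~\ref{thm:D}, we automatically have $\Pr(L_\infty \subseteq \cD) = 1$; using $\Pr(L = \cD) = 1$ once more, the remaining inclusion $\Pr(\cD \subseteq L_\infty) = 1$ is equivalent to $\Pr(L \subseteq L_\infty) = 1$. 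So the real work is to establish $\Pr(L \subseteq L_\infty) = 1$.

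To prove $\Pr(L \subseteq L_\infty) = 1$ I would split on the recurrence/transience dichotomy for genuinely $d$-dimensional random walks: either $S_n$ is recurrent in the sense used above, or $\|S_n\| \to \infty$ a.s.\ (classical Chung--Fuchs theory; see e.g.\ \cite{cf} and \cite[\S4.2]{dur}). In the recurrent case there is nothing to do, as Proposition~\ref{prop:recurrence} already gives $\Pr(L = L_\infty = \Sp{d-1}) = 1$. In the transient case, argue on the almost sure event $\{\|S_n\| \to \infty\}$: if $\bu \in L$, choose a subsequence $n_k$ with $\hat S_{n_k} \to \bu$; then also $\frac{1}{1+\|S_{n_k}\|} \to 0$, so $\frac{1}{1+\|S_{n_k}\|} + \|\hat S_{n_k} - \bu\| \to 0$ along $n_k$, which is precisely the condition for $\bu \in L_\infty$ in~\eqref{eq:asymptotic-directions}. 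Hence $L \subseteq L_\infty$ on this event, giving $\Pr(L \subseteq L_\infty) = 1$.

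Combining the two cases yields $\Pr(L \subseteq L_\infty) = 1$ unconditionally, hence $\Pr(L_\infty = \cD) = 1$, and then $\cD_\infty = \cD$ by the bookkeeping in the first paragraph. The only genuine content is the transient case, and within it the only delicate point is the correct application of the dichotomy: that a genuinely $d$-dimensional walk which is \emph{not} recurrent necessarily satisfies $\|S_n\| \to \infty$ almost surely, rather than merely failing to return near the origin along some subsequence. The genuine $d$-dimensionality hypothesis is exactly what rules out degenerate lower-dimensional behaviour here, and this is the step I would want to pin down carefully; everything else is manipulation of the definitions of $L$, $L_\infty$, $\cD$, $\cD_\infty$ together with the Hewitt--Savage zero--one law already used in Theorem~\ref{thm:D}.
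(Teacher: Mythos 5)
Your proposal is correct and follows essentially the same route as the paper: the recurrent case is dispatched by Proposition~\ref{prop:recurrence}, and in the transient case the fact that $\|S_n\|\to\infty$ a.s.\ makes every recurrent direction an asymptotic direction, after which Theorem~\ref{thm:D} finishes the argument. Your extra bookkeeping reducing $\cD_\infty=\cD$ to $\Pr(L_\infty=\cD)=1$ is fine, and the transience dichotomy you flag (not recurrent $\Rightarrow \|S_n\|\to\infty$ a.s.) is the standard Chung--Fuchs fact the paper also invokes implicitly.
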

\begin{proof}
The recurrent case is contained in Proposition~\ref{prop:recurrence};
thus suppose that $S_n$ is transient. Then since $\| S_n \| \to \infty$ a.s.,
we have that $\Pr ( L = L_\infty ) =1$ and $\cD = \cD_\infty$.
Combined with Theorem~\ref{thm:D}, this gives the result.
\end{proof}

Next we show how a distributional limit gives rise to recurrent directions.
Here and elsewhere, `$\tod$' denotes convergence in distribution and `$\supp$'
denotes the support of an $\R^d$-valued
random variable.

\begin{proposition}
\phantomsection
\label{prop:D-supp}
\begin{itemize}
\item[(i)]
Suppose that there is a random vector $\zeta \in \Sp{d-1}$ such that
$\hat S_n \tod \zeta$ as $n \to \infty$. Then $\supp \zeta \subseteq \cD$.
\item[(ii)]
Suppose there is a sequence $a_n$ of positive real numbers 
and a random vector $\xi \in \R^d$ with $\Pr ( \xi = \0 ) = 0$
such that 
$S_n /a_n \tod \xi$ as $n \to \infty$.
Then
$\supp \hat \xi \subseteq \cD$.
\end{itemize}
\end{proposition}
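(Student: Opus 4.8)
The plan is to reduce both parts to one observation and then invoke Proposition~\ref{prop:D-cones}. The observation is: \emph{if $\bu\in\Sp{d-1}$ and $r>0$ are such that $\Pr(S_n\in C(\bu;r))$ does not converge to $0$ as $n\to\infty$, then $\Pr(A(\bu;r))=1$.} Indeed, $\Pr(A(\bu;r))\in\{0,1\}$ by the Hewitt--Savage theorem, so it suffices to rule out $\Pr(A(\bu;r))=0$; but if $\Pr(A(\bu;r))=0$, then, writing $E_n:=\bigcup_{m\geq n}\{S_m\in C(\bu;r)\}$, we have $E_n\downarrow A(\bu;r)$, hence $\Pr(S_n\in C(\bu;r))\leq\Pr(E_n)\to\Pr(A(\bu;r))=0$, contradicting the hypothesis. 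Given the observation, to prove each part I would fix $\bu$ in the relevant support, check that $\Pr(S_n\in C(\bu;r))\not\to 0$ for every $r>0$, and conclude $\bu\in\cD$ via Proposition~\ref{prop:D-cones}.

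For part~(ii), fix $\bu\in\supp\hat\xi$ and $r>0$. The set $C(\bu;r)$ is a cone, hence invariant under dilation by $a_n^{-1}>0$, so $\Pr(S_n\in C(\bu;r))=\Pr(S_n/a_n\in C(\bu;r))$, and $C(\bu;r)$ is open in $\R^d$. Thus $S_n/a_n\tod\xi$ and the portmanteau theorem give
\[ \liminf_{n\to\infty}\Pr\bigl(S_n\in C(\bu;r)\bigr)\geq\Pr\bigl(\xi\in C(\bu;r)\bigr) . \]
Since $\Pr(\xi=\0)=0$, the event $\{\xi\in C(\bu;r)\}$ coincides up to a null set with $\{\hat\xi\in B_s(\bu;r)\}$; as $B_s(\bu;r)$ is a neighbourhood of $\bu$ in $\Sp{d-1}$ and $\bu\in\supp\hat\xi$, this has strictly positive probability, so the liminf is strictly positive and the observation applies. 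Hence $\bu\in\cD$, and since $\bu\in\supp\hat\xi$ and $r>0$ were arbitrary, $\supp\hat\xi\subseteq\cD$.

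Part~(i) follows by the same scheme, with a small adjustment to pass between $\Sp{d-1}$ and $\R^d$. Fix $\bu\in\supp\zeta$. For $r\in(0,1)$ we have $\0\notin B(\bu;r)$, and because $\hat S_n$ takes values in $\Sp{d-1}\cup\{\0\}$,
\[ \{S_n\in C(\bu;r)\}=\{\hat S_n\in B_s(\bu;r)\}=\{\hat S_n\in B(\bu;r)\} ; \]
since $B(\bu;r)$ is open in $\R^d$ and $\zeta\in\Sp{d-1}$ a.s., $\hat S_n\tod\zeta$ and portmanteau give $\liminf_n\Pr(S_n\in C(\bu;r))\geq\Pr(\zeta\in B_s(\bu;r))>0$. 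By the observation, $\Pr(A(\bu;r))=1$ for every $r\in(0,1)$, hence for all $r>0$ because $r\mapsto A(\bu;r)$ is nondecreasing; so $\bu\in\cD$ and $\supp\zeta\subseteq\cD$.

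I expect the only genuinely delicate point to be the bookkeeping at the interface between open subsets of $\Sp{d-1}$ and open subsets of $\R^d$ (handled above by deleting $\0$, which is automatic once $r<1$, and by using that both $\hat S_n$ and the distributional limit are supported on the sphere), together with the elementary but essential fact that a set visited only finitely often a.s.\ has one-step probabilities tending to $0$. No hypotheses beyond those already in force---in particular, no moment conditions---are needed.
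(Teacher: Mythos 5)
Your proof is correct and follows essentially the same route as the paper: weak convergence gives an asymptotically positive lower bound on the probability of lying in the cone $C(\bu;r)$, the monotone-limit step shows the i.o.\ event then has positive probability, and the Hewitt--Savage zero--one law upgrades this to probability one, so that $\bu \in \cD$ via the cone characterization. The only differences are cosmetic: you use the portmanteau inequality for open sets (thereby avoiding the paper's restriction to all but countably many $\eps$, i.e.\ continuity points of $\eps \mapsto \Pr(\|\zeta - \bu\| < \eps)$), and you handle part~(ii) directly through the dilation-invariance of $C(\bu;r)$ rather than reducing it to part~(i) by the continuous mapping theorem as the paper does.
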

\begin{proof}
For part~(i), suppose that $\hat S_n \tod \zeta$.
Then, for a given $\bu \in \Sp{d-1}$,
for all but countably many $\eps >0$,
\begin{align*}
\Pr(\| \hat S_n -\bu\|<\eps \text{ i.o.}) 
& = \Pr \left(\bigcap_{n=1}^{\infty}\bigcup_{m=n}^{\infty}\{\|\hat S_m -\bu\|<\eps \} \right)\\
& = \lim_{n\to \infty} \Pr \left(\bigcup_{m=n}^{\infty}\{\|\hat S_m -\bu \|<\eps\} \right)\\
& \geq \lim_{n\to \infty} \Pr(\|\hat S_n-\bu\|<\eps)\\
&= \Pr ( \| \zeta - \bu \| < \eps ),
\end{align*}
which is strictly positive provided $\bu\in\supp\zeta$. 
It follows by the Hewitt--Savage theorem that if $\bu\in\supp\zeta$, then
$\Pr(\|\hat S_n-\bu\|<\eps \io)=1$ for all $\eps >0$, and hence $\bu \in \cD$.

For part (ii), we have that since
  $\Pr (\xi = \0 ) = 0$, and the function $\bx \mapsto \hat \bx$ is continuous
on $\R^d \setminus \{ \0 \}$, the continuous mapping theorem
implies that
$\hat S_n \tod \hat \xi$, and then we may apply part~(i).
\end{proof}

Here is a sufficient condition for $\cD  = \Sp{d-1}$;
if $d=2$ the walk is recurrent and the result  also follows from~Proposition~\ref{prop:recurrence},
while if $d \geq 3$ the walk is transient.

\begin{corollary}
\label{cor:two-moments}
Suppose that $\Exp ( \| X \|^2) < \infty$ and $\mu = \0$. Then $\cD = \Sp{d-1}$.
\end{corollary}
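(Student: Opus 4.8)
The plan is to exhibit a distributional limit of $S_n/a_n$ and then invoke Proposition~\ref{prop:D-supp}(ii). Under the hypotheses $\Exp(\|X\|^2) < \infty$ and $\mu = \0$, the multivariate central limit theorem applies: $S_n / \sqrt{n} \tod \xi$, where $\xi$ is a centred Gaussian vector in $\R^d$ with covariance matrix $\Sigma := \Exp(X X^\tra)$. Since $S_n$ is assumed genuinely $d$-dimensional, $\Sigma$ is positive definite, so $\xi$ has a density that is strictly positive on all of $\R^d$. In particular $\Pr(\xi = \0) = 0$ and $\supp \xi = \R^d$.

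Next I would compute $\supp \hat\xi$. Because $\xi$ has full support $\R^d$ and the map $\bx \mapsto \hat\bx$ is continuous and surjective from $\R^d \setminus \{\0\}$ onto $\Sp{d-1}$, every neighbourhood in $\Sp{d-1}$ of any $\bu \in \Sp{d-1}$ is the image of an open cone in $\R^d$ that carries positive $\xi$-mass; hence $\Pr(\hat\xi \in B_s(\bu;\eps)) > 0$ for every $\bu$ and every $\eps > 0$, giving $\supp \hat\xi = \Sp{d-1}$. (Alternatively, one notes that $\hat\xi$ is uniformly distributed on $\Sp{d-1}$ when $\Sigma$ is a multiple of the identity, and for general positive definite $\Sigma$ its law is still mutually absolutely continuous with the uniform law, so has full support.) Applying Proposition~\ref{prop:D-supp}(ii) with $a_n = \sqrt{n}$ then yields $\Sp{d-1} = \supp \hat\xi \subseteq \cD$, and since $\cD \subseteq \Sp{d-1}$ by definition we conclude $\cD = \Sp{d-1}$.

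There is essentially no hard step here; the only point requiring a modicum of care is the claim that $\xi$ is non-degenerate, i.e.\ that $\Sigma$ is positive definite rather than merely positive semi-definite. This is exactly where the standing assumption that the walk is genuinely $d$-dimensional (so $\supp X$ is not contained in any $(d-1)$-dimensional subspace) is used: if $\Sigma$ were singular, then $X$ would lie a.s.\ in the orthogonal complement of its kernel, contradicting that assumption. The remaining remarks in the statement — that for $d = 2$ the walk is recurrent (by the Chung--Fuchs theorem, since a mean-zero finite-variance walk in the plane is recurrent), so the conclusion also follows from Proposition~\ref{prop:recurrence}, whereas for $d \geq 3$ such a walk is transient — are not needed for the proof and can be recorded as a remark.
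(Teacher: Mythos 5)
Your proposal is correct and follows exactly the paper's argument: apply the multivariate central limit theorem to get a non-degenerate Gaussian limit for $n^{-1/2}S_n$ and then invoke Proposition~\ref{prop:D-supp}(ii). The extra details you supply (positive definiteness of $\Sigma$ via genuine $d$-dimensionality, and full support of $\hat\xi$) are correct elaborations of what the paper leaves implicit.
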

\begin{proof}
By assumption and the central limit theorem, $n^{-1/2} S_n$ converges in distribution to a non-degenerate
normal distribution. Proposition~\ref{prop:D-supp} then shows that $\Sp{d-1} \subseteq \cD$.
\end{proof}

\section{Compactification and growth rates}
\label{sec:comments}

Let $\bR$ denote the compactification of $\R^d$
obtained by adjoining the ``sphere at $\infty$''.
More formally, $\bR$ is the compact metric space
obtained by the completion of $\R^d$ with respect
to the metric
\[ \rho(\bx,\by) = \left\| \frac{\bx}{1 + \| \bx \|} - \frac{\by}{1+\|\by\|} \right\| .\]
Then we can represent $\bR$ as $\bR = \R^d \cup \R_\infty^d$
where $\R_\infty^d$ is in bijection to $\Sp{d-1}$.
We write elements of $\R_\infty^d$ as $\infty \cdot \bu$ for $\bu \in \Sp{d-1}$.
The metric $\rho$ on $\R^d$ is equivalent to the Euclidean metric, and extended
to $\bR$ it is such that $\bx_n \in \R^d$ has $\bx_n \to \infty \cdot \bu$ for $\bu \in \Sp{d-1}$
if $\| \bx_n \| \to \infty$ and $\hat \bx_n \to \bu$.

The set of accumulation points of $S_0, S_1, S_2, \ldots$,
taken in $\bR$, thus consists of any accumulation points in $\R^d$
(a.s.~there are none if $S_n$ is transient)
and accumulation points in $\R_\infty^d$ represented by the set
$\cL_\infty$~of asymptotic directions, as defined at~\eqref{eq:asymptotic-directions}.

Erickson~\cite{erickson76}, generalizing one-dimensional work of Kesten and himself~\cite{kesten70,ek},
considers a finer graduation of asymptotic directions. For $\alpha \in \RP$, set
\[ \cL^{>\alpha}_\infty := \left\{ \bu \in \Sp{d-1} : \liminf_{n \to \infty} \left( \frac{n^\alpha}{1+\| S_n \|} + \| \hat S_n - \bu \| \right) = 0  \right\} .\]
Then $\cL^{>0}_\infty = \cL_\infty$, while $\cL^{>\alpha_2}_\infty \subseteq \cL^{>\alpha_1}_\infty$
for any $0 \leq \alpha_1 \leq \alpha_2 < \infty$. Similarly, set
\[ \cD^{>\alpha}_\infty  := \left\{ \bu \in \Sp{d-1} : \liminf_{n \to \infty} \left( \frac{n^\alpha}{1+\| S_n \|} + \| \hat S_n - \bu \| \right) = 0 , \as \right\} .\]
Roughly speaking, the set $\cL^{>\alpha}_\infty$ consists of those directions in which the walk
grows at rate faster than $n^\alpha$. Also for $\alpha > 0$ set 
\begin{equation}
\label{eq:A-alpha}
\cA^\alpha  = \left\{ \bx \in \R^d : \liminf_{n \to \infty} \left\| n^{-\alpha} S_n - \bx \right\| = 0 \right\}, \end{equation}
and $\cL^\alpha_ \infty = \{ \hat \bx : \bx \in \cA^\alpha \setminus \{ \0 \} \}$.
Then $\cL^\alpha_\infty \subseteq \cL_\infty$
are those asymptotic directions in which the walk grows at rate precisely~$n^\alpha$.

Erickson~\cite{erickson76,erickson00} studies in detail~$\cA^\alpha$ and $\cL^{>\alpha}_\infty$,
with particular focus on the case~$\alpha=1$, which has some peculiar features. The version of Theorem~\ref{thm:D} stated by Erickson~\cite[p.~802]{erickson76} is that $\Pr ( \cL^{>\alpha}_\infty = \cD^{>\alpha}_\infty ) = 1$,
and $\cD^{>\alpha}_\infty$ is a closed subset of~$\Sp{d-1}$. 

For $d \geq 3$, the value $\alpha = 1/2$ is special,
since a remarkable paper of Kesten~\cite{kesten78} shows that $n^{-\alpha} \| S_n \| \to \infty$ for any $\alpha < 1/2$ and any genuinely $d$-dimensional 
random walk~$S_n$ in $\R^d$, $d \geq 3$. Thus for $d \geq 3$
we have $\cD^{>\alpha}_\infty = \cD_\infty$ for any $0 \leq \alpha < 1/2$.

\section{Limiting direction}
\label{sec:direction}

By the Hewitt--Savage theorem, 
$\Pr ( \lim_{n \to \infty} \hat S_n \text{ exists} ) \in \{ 0,1\}$,
and if the limit exists, then
it is a.s.~constant. 
If $\lim_{n\to\infty} \| S_n \| = \infty$ a.s.~and $\lim_{n \to \infty} \hat S_n = \bu$ a.s.~for some $\bu \in \Sp{d-1}$, 
we say that
 $S_n$ is  \emph{transient with limiting direction} $\bu$.

\begin{lemma}
\label{lem:lim-dir}
Let $\bu \in \Sp{d-1}$.
The following are equivalent.
\begin{itemize}
\item[(i)] $\cD = \{ \bu \}$.
\item[(ii)] $\lim_{n \to \infty} \hat S_n = \bu$, a.s.
\item[(iii)] $S_n$ is transient with limiting direction $\bu$.
\end{itemize}
\end{lemma}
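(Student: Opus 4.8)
The plan is to establish the cycle of implications (ii) $\Rightarrow$ (iii) $\Rightarrow$ (i) $\Rightarrow$ (ii), using the results already in hand, principally Theorem~\ref{thm:D} and Corollary~\ref{cor:not-full-sphere}. The implication (ii) $\Rightarrow$ (iii) will be the main obstacle, since it requires ruling out the possibility that $\hat S_n \to \bu$ while $\|S_n\|$ fails to diverge. The remaining implications are essentially bookkeeping on top of Theorem~\ref{thm:D}.

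\textbf{(ii) $\Rightarrow$ (iii).} Assume $\hat S_n \to \bu$ a.s. Since $d \geq 2$ and $\Sp{d-1}$ has more than one point, $\cD$ cannot be all of $\Sp{d-1}$ (indeed, under (ii), $L = \{\bu\}$ a.s., so $\cD = \{\bu\} \neq \Sp{d-1}$ by Theorem~\ref{thm:D}); hence Corollary~\ref{cor:not-full-sphere} gives that $S_n$ is transient, i.e.\ $\|S_n\| \to \infty$ a.s. Combined with $\hat S_n \to \bu$, this is precisely transience with limiting direction $\bu$. (One small point to handle: for $d=1$, transience of the walk is the statement $S_n \to \pm\infty$, and $\hat S_n \to u \in \{-1,+1\}$ directly identifies the sign; the case $d \geq 2$ is the one where Corollary~\ref{cor:not-full-sphere} is genuinely used. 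If the paper intends $d \geq 2$ throughout Section~\ref{sec:direction} the $d=1$ remark can be omitted.)

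\textbf{(iii) $\Rightarrow$ (i).} If $S_n$ is transient with limiting direction $\bu$, then $\hat S_n \to \bu$ a.s., so the only subsequential limit of $\hat S_n$ in $\Sp{d-1}$ is $\bu$; that is, $L = \{\bu\}$ a.s. By Theorem~\ref{thm:D}, $\cD = L$ a.s.\ and $\cD$ is deterministic, whence $\cD = \{\bu\}$.

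\textbf{(i) $\Rightarrow$ (ii).} Suppose $\cD = \{\bu\}$. By Theorem~\ref{thm:D}, a.s.\ $L = \cD = \{\bu\}$, so a.s.\ the sequence $\hat S_n$ has exactly one subsequential limit in $\Sp{d-1}$, namely $\bu$. Since $\Sp{d-1}$ is compact, a sequence in $\Sp{d-1}$ with a unique subsequential limit converges to it; but $\hat S_n$ need not lie in $\Sp{d-1}$ (it equals $\0$ when $S_n = \0$). However, $\cD = \{\bu\} \neq \Sp{d-1}$ forces transience by Corollary~\ref{cor:not-full-sphere}, so $\|S_n\| \to \infty$ a.s., and in particular $S_n \neq \0$ for all large $n$; thus $\hat S_n \in \Sp{d-1}$ eventually, and the compactness argument applies to give $\hat S_n \to \bu$ a.s. This completes the cycle.
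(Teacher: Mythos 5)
Your proof is correct and takes essentially the same route as the paper: a cycle of three implications in which transience is extracted from $\cD \neq \Sp{d-1}$ via Corollary~\ref{cor:not-full-sphere}, and convergence of $\hat S_n$ is deduced from uniqueness of the accumulation point by compactness of the sphere (after discarding the finitely many times with $S_n = \0$). The only cosmetic differences are that you run the cycle in the opposite order and lean on the identity $\Pr(L = \cD) = 1$ from Theorem~\ref{thm:D}, where the paper instead argues via Proposition~\ref{prop:D-cones} and a direct Hewitt--Savage $\limsup$ argument.
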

\begin{proof}
The result will follow from the sequence of implications (iii) $\Rightarrow$ (ii) $\Rightarrow$ (i) $\Rightarrow$ (iii).
That (iii) implies (ii) is trivial.
If (ii) holds, then clearly $\bu \in \cD$, and for any $r >0$ we have $\hat S_n \in B_s ( \bu ; r)$ for all but finitely many $n$. 
For any $\bv \in \Sp{d-1} \setminus \{ \bu \}$, we may choose $r>0$ sufficiently small
so that $B_s (\bu ; r)$ and $B_s (\bv; r)$ are disjoint,  so
that $\Pr ( \hat S_n \in B_s ( \bv ; r) \io ) = 0$,
and hence Proposition~\ref{prop:D-cones} shows that $\bv \notin \cD$. Thus (i)~holds.

Finally, suppose that (i) holds. Then Corollary~\ref{cor:not-full-sphere}
shows that $S_n$ is transient, and in particular $S_n = \0$ only finitely often. 
By the Hewitt--Savage theorem, $\limsup_{n \to \infty} \| \hat S_n - \bu \|$ is a.s.~constant.
If $\bu$ is not a limiting direction for the walk,
then  this constant is strictly positive, so that, for some $\eps>0$, $\| \hat S_n - \bu \| \geq \eps$ i.o., a.s. 
Since the set $\{ \bv \in \Sp{d-1} : \| \bv - \bu \| \geq \eps \}$
is compact, it follows that $\hat S_n$ has an accumulation point $\bv \neq \bu$, and hence $\bv \in \cD$, 
which gives
a contradiction. Hence (i) implies (iii).
\end{proof}

The following result is contained in Theorem~1.6.1(i) of~\cite{mpw}.

\begin{proposition}
\label{prop3}
Suppose that $\Exp \| X \| < \infty$. If $\mu \neq \0$, then $\cD = \{ \hat \mu \}$.
\end{proposition}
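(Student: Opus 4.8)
The plan is to use the strong law of large numbers (SLLN). Since $\Exp \| X \| < \infty$, the multidimensional SLLN gives $S_n / n \to \mu$ almost surely as $n \to \infty$. Because $\mu \neq \0$, it follows that $\| S_n \| / n \to \| \mu \| > 0$ almost surely, so in particular $\| S_n \| \to \infty$ almost surely and $S_n = \0$ for only finitely many $n$. Moreover, $\hat S_n = S_n / \| S_n \| = (S_n / n) / (\| S_n \| / n) \to \mu / \| \mu \| = \hat \mu$ almost surely, by continuity of the relevant operations away from the origin. Thus $S_n$ is transient with limiting direction $\hat \mu$.

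Having established this, I would invoke Lemma~\ref{lem:lim-dir}: condition~(iii) there (transient with limiting direction $\hat \mu$) is equivalent to condition~(i), namely $\cD = \{ \hat \mu \}$. This immediately gives the claimed conclusion. Alternatively, one could argue directly from Proposition~\ref{prop:D-supp}(ii): taking $a_n = n$ and $\xi = \mu$ (a constant random vector, so $\Pr(\xi = \0) = 0$ since $\mu \neq \0$), the SLLN yields $S_n / n \tod \mu$, whence $\supp \hat \xi = \{ \hat \mu \} \subseteq \cD$; combining with Corollary~\ref{cor:not-full-sphere} or Lemma~\ref{lem:lim-dir} then forces $\cD = \{ \hat \mu \}$ exactly.

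There is no real obstacle here: the only mild point to be careful about is the passage from $S_n/n \to \mu$ to $\hat S_n \to \hat \mu$, which requires knowing that $\| S_n \| \to \infty$ (so that $\hat S_n$ is eventually well-defined and not equal to $\0$) and then applying the continuity of $\bx \mapsto \hat \bx$ on $\R^d \setminus \{ \0 \}$ along the almost sure event where $S_n / n \to \mu \neq \0$. Since the paper already notes this result is contained in Theorem~1.6.1(i) of~\cite{mpw}, the expected proof is short and amounts to quoting the SLLN and one of the structural results established above.
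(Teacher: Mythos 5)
Your main argument is correct and is essentially the paper's proof: apply the SLLN to get $\hat S_n \to \hat\mu$ a.s.\ (noting $\|S_n\|\to\infty$ so $\hat S_n$ is eventually well defined), then conclude $\cD=\{\hat\mu\}$ via Lemma~\ref{lem:lim-dir}. One small caveat on your alternative route: Proposition~\ref{prop:D-supp}(ii) only yields the inclusion $\{\hat\mu\}\subseteq\cD$, and Corollary~\ref{cor:not-full-sphere} alone does not upgrade this to equality, so you still need Lemma~\ref{lem:lim-dir} (or the a.s.\ limit of $\hat S_n$) to rule out other recurrent directions.
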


\begin{remark}
If $\mu = \0$ there is no limiting direction: see Proposition~\ref{prop:zero-drift} below.
\end{remark}

\begin{proof}[Proof of Proposition~\ref{prop3}.]
The strong law of large numbers (SLLN) shows that $n^{-1} S_n \to \mu$, a.s.,
and $n^{-1} \| S_n  \| \to \| \mu \|$, a.s. If $\mu \neq \0$, then $\| S_n \| \to \infty$, so 
$S_n \neq \0$ for all but finitely many $n$, and then
\[ \lim_{n \to \infty} \hat S_n = \lim_{n \to \infty} \frac{n^{-1} S_n}{n^{-1} \| S_n \|} = \hat \mu , \as \qedhere \]
\end{proof}

\section{The zero-drift case}
\label{sec:plane}

In this section we turn to the case where the walk has zero drift, i.e., $\mu = \0$. 
If $d=1$, then zero drift implies recurrence, and hence $\cD = \{ -1,+1\}$
(see e.g.~\cite[Theorem~4.2.7]{dur}). 
If $\Exp ( \| X \|^2 ) < \infty$, then Corollary~\ref{cor:two-moments}
shows that $\cD = \Sp{d-1}$. 
Thus the most interesting cases are when $d \geq 2$ and $\Exp ( \| X \|^2 ) = \infty$.
The following result contrasts with Proposition~\ref{prop3},
and improves on Theorem~1.6.1(ii) of~\cite{mpw}.

\begin{proposition}
\label{prop:zero-drift}
Suppose that $d \geq 2$, $\Exp \| X \| < \infty$, and $\mu = \0$.
Then $\cD$ is uncountable.
\end{proposition}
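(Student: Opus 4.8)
The plan is to deduce the result from the single fact that \emph{every unit vector is orthogonal to some element of $\cD$}, i.e.\ that $\bigcup_{\bz \in \cD} ( \bz^\perp \cap \Sp{d-1} ) = \Sp{d-1}$. First I would dispose of the recurrent case: if $S_n$ is recurrent then Proposition~\ref{prop:recurrence} gives $\cD = \Sp{d-1}$, which is uncountable since $d \geq 2$, and the displayed identity is trivial. So from now on I may assume $S_n$ is transient, and hence $\| S_n \| \to \infty$ a.s. Granting the identity, the conclusion is then quick: for each $\bz$, the set $\bz^\perp \cap \Sp{d-1}$ is a great $(d{-}2)$-sphere and therefore a surface-measure-null subset of $\Sp{d-1}$ (this is where $d \geq 2$ is used); so if $\cD$ were countable, the left-hand side would be a countable union of null sets, hence null, contradicting the positivity of the surface measure of $\Sp{d-1}$. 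Thus $\cD$ is uncountable.

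To prove the identity, I would fix $\bv \in \Sp{d-1}$ and study the one-dimensional walk $Y_n := S_n \cdot \bv$. Its increments $X_k \cdot \bv$ satisfy $\Exp | X_k \cdot \bv | \leq \Exp \| X \| < \infty$ and $\Exp ( X_k \cdot \bv ) = \mu \cdot \bv = 0$, and are non-degenerate, since $X \cdot \bv = 0$ a.s.\ would force $\supp X \subseteq \bv^\perp$, contradicting the standing genuine-$d$-dimensionality hypothesis. By the Chung--Fuchs theorem (see e.g.~\cite[Theorem~4.2.7]{dur}), $Y_n$ is therefore recurrent, so $\liminf_{n \to \infty} | Y_n | = 0$ a.s. I would then take a (random) subsequence $(m_j)$ with $| Y_{m_j} | \leq 1$ for all $j$; since $\| S_{m_j} \| \to \infty$, after passing to a further subsequence I may assume $\hat S_{m_j} \to \bz$ for some (random) $\bz \in \Sp{d-1}$. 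Then $\bz \in L$ by the definition of $L$, while
\[ \bz \cdot \bv = \lim_{j \to \infty} \hat S_{m_j} \cdot \bv = \lim_{j \to \infty} \frac{Y_{m_j}}{\| S_{m_j} \|} = 0 . \]
Hence, a.s., there exists $\bz \in L$ with $\bz \perp \bv$; since $\Pr ( L = \cD ) = 1$ by Theorem~\ref{thm:D} and $\cD$ is deterministic, the (non-random) assertion ``there exists $\bz \in \cD$ with $\bz \perp \bv$'' must hold. As $\bv \in \Sp{d-1}$ was arbitrary, the identity follows.

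There is no hard technical step here; the whole difficulty is conceptual, namely hitting upon the combination of (i) the recurrence of every one-dimensional projection $S_n \cdot \bv$ — precisely what the zero-drift and finite-mean hypotheses provide — and (ii) the measure-theoretic pigeonhole behind the covering identity. The one place that needs a little care is the passage from the \emph{a.s.}\ existence of a random recurrent direction orthogonal to $\bv$ to the corresponding deterministic statement about $\cD$, which is handled via the identification $L = \cD$ of Theorem~\ref{thm:D}; relatedly, the split into recurrent and transient cases is genuinely needed, because the estimate $\hat S_{m_j} \cdot \bv \to 0$ relies on $\| S_n \| \to \infty$. (It is worth noting that this argument cannot be pushed to give more than uncountability, consistently with Example~\ref{ex:four-dim}, where $\cD$ is null.)
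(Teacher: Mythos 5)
Your proof is correct and follows essentially the same route as the paper: your covering identity is exactly the paper's Lemma~\ref{lem:orthogonal-span} (for each $\bv \in \Sp{d-1}$ some element of $\cD$ is orthogonal to $\bv$, proved via recurrence of the projected walk $S_n \cdot \bv$ and transience of $S_n$, with the recurrent case handled separately by Proposition~\ref{prop:recurrence}), and your measure-theoretic pigeonhole is precisely the paper's proof of Proposition~\ref{prop:zero-drift}. The only cosmetic difference is that you extract the orthogonal direction by a direct subsequence argument plus $\Pr(L=\cD)=1$, whereas the paper uses the compact sets $O_\eps(\bu)$ and compactness of $\cD$; both are sound.
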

 
In the case where $d=2$, we can say more.
For measurable $A \subseteq \Sp{d-1}$ we write $|A|$ for the Haar measure of $A$.
Write `$\eqd$' for equality in distribution;
$X \eqd -X$ means that random variable $X \in \R^d$ has a centrally symmetric distribution.

\begin{proposition}
\label{prop:plane}
Suppose that $d=2$, $\Exp \| X \| < \infty$, and $\mu = \0$.
\begin{itemize}
\item[(i)] We have $| \cD | \geq \frac{1}{2} | \Sp{1} |$.
\item[(ii)] If $X \eqd -X$, then $\cD = \Sp{1}$.
\end{itemize}
\end{proposition}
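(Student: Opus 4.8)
The plan is to exploit the planar structure together with the mean-zero hypothesis and the finite-mean hypothesis to force a large set of recurrent directions. For part~(i), I would parametrise $\Sp{1}$ by angle and consider the one-dimensional projections $S_n \cdot \bu$ for $\bu \in \Sp{1}$. The key observation is that under $\Exp\|X\| < \infty$ and $\mu = \0$, for each fixed $\bu$ the projected walk $(S_n \cdot \bu)_{n\ge 0}$ is a genuine (possibly degenerate) one-dimensional mean-zero random walk, and hence by the $d=1$ trichotomy either it is identically zero (ruled out since $S_n$ is genuinely $2$-dimensional, so at most finitely many $\bu$ can have $S_n\cdot\bu\equiv 0$, in fact none since that would force $\supp X$ into a line) or it satisfies $\limsup_n (S_n\cdot\bu) = +\infty$ and $\liminf_n(S_n\cdot\bu) = -\infty$ a.s.\ --- the oscillating case~(iii). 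So for \emph{every} $\bu\in\Sp{1}$ we have $\sup_n (S_n\cdot\bu) = +\infty$ a.s.

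From this I want to deduce $|\cD|\ge \frac12|\Sp 1|$. The bridge is the identity from the proof of Lemma~\ref{lem:closed}: $\bu\in\cD$ iff $\limsup_n(\hat S_n\cdot\bu)=1$ a.s. Now suppose, for contradiction, that $|\cD| < \frac12|\Sp 1|$; then the complement $\Sp1\setminus\cD$ has measure $>\frac12|\Sp1|$, so by a pigeonhole/antipodal argument there is a direction $\bu$ with both $\bu\in\Sp1\setminus\cD$ and $-\bu\in\Sp1\setminus\cD$. (This uses that the measure of a set and of its antipodal reflection are equal, so if both the set and its complement-of-$\cD$-intersection were disjoint from their own reflections we could not exceed half; more carefully: if $A := \Sp1\setminus\cD$ has $|A| > \frac12|\Sp1|$ then $A\cap(-A)\ne\emptyset$.) For such a $\bu$, by the Hewitt--Savage theorem $\limsup_n(\hat S_n\cdot\bu) = c_+ < 1$ and $\limsup_n(\hat S_n\cdot(-\bu)) = c_- < 1$ a.s., i.e.\ $\liminf_n(\hat S_n\cdot\bu) = -c_- > -1$. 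Thus a.s.\ the unit vectors $\hat S_n$ eventually stay in the closed band $\{\hat S_n\cdot\bu \in [-c_-, c_+]\}$ (for the infinitely many $n$ with $S_n\ne\0$), which is bounded away from the two poles $\pm\bu$. I then want to derive a contradiction with $\sup_n(S_n\cdot\bu)=+\infty$: if $S_n\cdot\bu$ is large and positive while $\hat S_n\cdot\bu \le c_+ < 1$, that forces $\|S_n\|$ to be correspondingly large and the transverse component $S_n\cdot\bu^\perp$ to be comparable to $\|S_n\|$. The anticipated main obstacle is making this quantitative: I need to rule out that the walk escapes to infinity only in the two transverse directions $\pm\bu^\perp$ while keeping $S_n\cdot\bu$ oscillating but bounded-in-ratio. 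The clean way is to apply the projection dichotomy to $\bu^\perp$ as well, or better, to \emph{every} direction simultaneously via Theorem~\ref{thm:D-infty} and Theorem~\ref{thm:D}: the point is that $L_\infty$ is nonempty and closed and equals $\cD$, so there is at least one asymptotic direction; combining the oscillation in \emph{all} projections should pin down that $\cD$ cannot be confined to an arc of measure $<\frac12|\Sp1|$. I expect the sharp constant $\frac12$ to emerge precisely from the antipodal obstruction: a closed arc of length $<\pi$ (measure $<\frac12|\Sp1|$) has its two endpoints subtending an angle $<\pi$, so its complement contains an antipodal pair, and an asymptotic direction $\bu$ in the complement would force, via the projection onto $\bu$, a contradiction with $\limsup_n(\hat S_n\cdot(-\bu)) < 1$.

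For part~(ii), assume additionally $X\eqd -X$. Then $S_n \eqd -S_n$ for every $n$, and more strongly the whole process $(S_n)_{n\ge0}\eqd(-S_n)_{n\ge0}$, so the random closed set $L$ of recurrent directions satisfies $L\eqd -L$; since by Theorem~\ref{thm:D} $L = \cD$ is deterministic a.s., we get $\cD = -\cD$, i.e.\ $\cD$ is symmetric under the antipodal map. Now $\cD$ is a nonempty closed symmetric subset of $\Sp1$ with $|\cD|\ge\frac12|\Sp1|$ by part~(i). To upgrade this to $\cD = \Sp1$ I would argue as follows: suppose $\cD\ne\Sp1$, so its complement is a nonempty (relatively) open set, hence contains an open arc $I$; by symmetry $-I$ is also in the complement, and $I, -I$ are disjoint open arcs. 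Pick $\bu$ in the ``middle'' of $I$; then there is $r>0$ with $B_s(\bu;r)\cap\cD = \emptyset$ and, by symmetry, $B_s(-\bu;r)\cap\cD=\emptyset$, so by Proposition~\ref{prop:D-cones} (or Lemma~\ref{lem:cones}) $\Pr(A(\bu;r)) = \Pr(A(-\bu;r)) = 0$, meaning a.s.\ $\hat S_n$ eventually avoids both $B_s(\bu;r)$ and $B_s(-\bu;r)$. By symmetry of the walk, removing a small symmetric pair of caps around $\pm\bu$ from the sphere still leaves $\hat S_n$ with a recurrent direction in each of the two remaining arcs (since the walk restricted to either half-plane sees a mean-zero oscillation); formally I would again use the projection onto $\bu$: $S_n\cdot\bu$ is, by symmetry, a mean-zero one-dimensional walk with $\limsup = +\infty$, $\liminf = -\infty$ a.s., forcing $\limsup_n(\hat S_n\cdot\bu) = 1$ unless $\|S_n\|$ grows transversally --- but the transverse projection $S_n\cdot\bu^\perp$ is \emph{also} mean-zero and oscillating, and one of these two must dominate infinitely often, giving $\bu$ or $\bu^\perp$ (or their antipodes) in $\cD$, contradicting $\bu\notin\cD$ unless we are more careful about which of the four half-axis directions is hit. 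The robust finish is: the set of asymptotic directions $\cD$ is closed, symmetric, of measure $\ge\frac12|\Sp1|$, and \emph{if it misses any point it misses an open arc and its reflection, leaving two closed arcs each of measure $<\frac12|\Sp1|$ whose union is $\cD$} --- but each such arc, being a proper closed sub-arc, has a well-defined pair of endpoints, and the direction $\bv$ bisecting the \emph{complementary} arc $I$ has the property that $S_n\cdot\bv$ achieves arbitrarily large positive values (mean-zero oscillation) while $\hat S_n$ stays out of $B_s(\bv;r)$, which by the quantitative argument of part~(i) forces an accumulation point of $\hat S_n$ at one of the two endpoints of $I$ --- consistent so far --- so the true contradiction must come from iterating: the complement being open, we can shrink $I$ to its supremum endpoint $\bw\in\cD$, and then a direction slightly \emph{inside} $I$ near $\bw$ is in the complement yet arbitrarily close to $\cD$, and the oscillation of its projection combined with closedness of $\cD$ squeezes a recurrent direction into $I$. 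The main difficulty in part~(ii) is therefore exactly this: turning ``the projected walk onto $\bv$ oscillates with unbounded amplitude'' into ``$\bv$ itself (not merely some other direction) is a recurrent direction'', and I expect the symmetry $\cD = -\cD$ to be the crucial extra input that makes it work --- it rules out the walk escaping to $+\infty$ along $\bv^\perp$ but not $-\bv^\perp$, so that any transverse escape is balanced and cannot prevent $\hat S_n\cdot\bv$ from returning near $\pm1$.
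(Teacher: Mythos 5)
Your reductions are sound as far as they go --- the antipodal pigeonhole for (i) (if $|\cD|<\tfrac12|\Sp{1}|$ then some pair $\pm\bu$ lies outside $\cD$) and the symmetry $\cD=-\cD$ for (ii) are exactly the right bookkeeping, and indeed the paper's proof runs through the same reduction: it shows that for \emph{every} $\bu\in\Sp{1}$ at least one of $\pm\bu$ belongs to $\cD$ (its Lemma~\ref{lem:antipodes}), after which (i) follows from $|\cC_1|=|\cD_1|$ and (ii) from $\cD_1=\emptyset$. But the bridge you propose from the one-dimensional projections to that statement has a genuine gap, which you yourself flag and never close. You use only the trichotomy conclusion that $S_n\cdot\bu$ \emph{oscillates} ($\limsup=+\infty$, $\liminf=-\infty$), and oscillation of every projection is simply too weak to force directions into $\cD$: in Example~\ref{ex:ber-alpha} with $\alpha\in(0,1)$ every projection $S_n\cdot\bu$ is a symmetric, hence oscillating, one-dimensional walk, yet $\cD=\{\pm\be_2\}$ has measure zero. (That example has $\Exp\|X\|=\infty$, so it does not contradict the proposition, but it shows that any argument using only ``all projections oscillate'' --- including your appeal to Theorems~\ref{thm:D} and~\ref{thm:D-infty} to ``pin down'' $\cD$ --- cannot succeed; the finite-mean hypothesis must enter more strongly.) The transverse-escape scenario you worry about is therefore a real obstruction, not a technicality to be handled ``quantitatively'' later, and your part~(ii) finish runs into the same wall: a closed antipodally symmetric set of measure $\ge\tfrac12|\Sp{1}|$ need not be all of $\Sp{1}$, and your iteration/squeezing sketch never produces the contradiction.

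The missing ingredient is the \emph{recurrence}, not mere oscillation, of the projected walk, which is where $\Exp\|X\|<\infty$ and $\mu=\0$ are really used (Chung--Fuchs). Given $\bu\in\Sp{1}$, project onto the orthogonal direction $\bw$ with $\bw\cdot\bu=0$: since $\Exp(X\cdot\bw)=0$ with finite mean, the walk $S_n\cdot\bw$ is recurrent, so $\liminf_{n\to\infty}|S_n\cdot\bw|<\infty$ a.s. If $S_n$ is transient then $\|S_n\|\to\infty$, whence $\liminf_{n\to\infty}|\hat S_n\cdot\bw|=0$; thus $\hat S_n$ visits the compact set $\{\bv\in\Sp{1}:|\bv\cdot\bw|\le\eps\}$ infinitely often for every $\eps>0$, so it has an accumulation point there, and by Theorem~\ref{thm:D} and compactness of $\cD$ one of $\pm\bu$ lies in $\cD$. (If $S_n$ is recurrent, Proposition~\ref{prop:recurrence} gives $\cD=\Sp{1}$ outright.) This is the step that defeats transverse escape: recurrence pins $|S_n\cdot\bw|$ back into a bounded window infinitely often while $\|S_n\|\to\infty$, which your $\limsup$-only information cannot do. With this lemma in hand, your pigeonhole gives (i) immediately, and combining it with $\cD=-\cD$ gives (ii); without it, the core of the proof is missing.
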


\begin{remarks}
\begin{myenumi}
\setlength{\itemsep}{0pt plus 1pt}
\item[{\rm (a)}] Example~\ref{ex:ber-alpha} below gives a walk with $d=2$, $X \eqd -X$, and $\Exp \| X \| = \infty$, for which $\cD$
has only two elements, so the condition $\Exp \| X \| < \infty$ in Proposition~\ref{prop:plane} cannot be  removed.
 \item[{\rm (b)}]  Example~\ref{ex:four-dim} below gives a family of random walks in $\R^d$, $d \geq 4$,
for which $\mu = \0$ and $X \eqd -X$, but $\cD$ is a set of measure zero,
so in higher dimensions the hypotheses of Proposition~\ref{prop:plane} do not guarantee that $\cD$ occupies a positive fraction of the sphere.
 \end{myenumi}
\end{remarks}

For further results in the zero-drift case, see Corollary~\ref{cor:zero-drift-hull} below.
In the rest of this section we prove Propositions~\ref{prop:zero-drift} and~\ref{prop:plane}.

\begin{lemma}
\label{lem:orthogonal-span}
Suppose that $d \geq 2$, $\Exp \| X \| < \infty$, and $\mu = \0$.
Then for every $\bu \in \Sp{d-1}$, there exists $\bv \in \cD$ with $\bu \cdot \bv = 0$.
\end{lemma}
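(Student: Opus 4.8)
The plan is to exploit the one-dimensional projection $S_n \cdot \bu$ together with the zero-drift trichotomy in $\R$. Fix $\bu \in \Sp{d-1}$ and consider the real-valued random walk $Y_n := S_n \cdot \bu$ with increments $X_k \cdot \bu$. Since $\Exp\|X\| < \infty$ we have $\Exp|X\cdot\bu| < \infty$ and $\Exp(X\cdot\bu) = \mu\cdot\bu = 0$, so by the one-dimensional trichotomy (the zero-mean case forces recurrence, see~\cite[Theorem~4.2.7]{dur}, unless $X\cdot\bu \equiv 0$) we have $\liminf_{n\to\infty} Y_n \leq 0 \leq \limsup_{n\to\infty} Y_n$ almost surely; in fact $\limsup_{n\to\infty}(S_n\cdot\bu) \geq 0$ and $\liminf_{n\to\infty}(S_n\cdot\bu) \leq 0$, a.s. I would first reduce to the key claim: there is some $\bv \in \cD$ with $\bu\cdot\bv \leq 0$ and some $\bw \in \cD$ with $\bu\cdot\bw \geq 0$, and then upgrade this to the existence of $\bv\in\cD$ with $\bu\cdot\bv = 0$ using the fact (from Theorem~\ref{thm:D} and Lemma~\ref{lem:closed}) that $\cD$ is a closed, hence compact, subset of $\Sp{d-1}$.

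For the key claim, suppose for contradiction that every $\bv \in \cD$ satisfies $\bu\cdot\bv > 0$. Since $\cD$ is compact and $\bv \mapsto \bu\cdot\bv$ is continuous, there would be some $\delta > 0$ with $\bu\cdot\bv \geq \delta$ for all $\bv\in\cD$. By Theorem~\ref{thm:D-infty}, $\cD = \cD_\infty$, so along the subsequence realizing any recurrent direction the norm $\|S_n\|$ tends to infinity; combined with $\Pr(L = L_\infty = \cD) = 1$, this forces $\hat S_n \cdot \bu$ to be eventually bounded below by, say, $\delta/2$ along \emph{every} divergent subsequence — more carefully, I would argue that $\cD\subseteq\{\bv:\bu\cdot\bv\geq\delta\}$ together with $L_\infty = \cD$ a.s.\ implies $\liminf_{n\to\infty}(\hat S_n\cdot\bu)\mathbf 1\{\|S_n\|\text{ large}\} \geq \delta/2$, and in the transient case ($\|S_n\|\to\infty$ a.s.) this gives $S_n\cdot\bu \to +\infty$, contradicting $\liminf_{n\to\infty}(S_n\cdot\bu)\leq 0$. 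In the recurrent case, Proposition~\ref{prop:recurrence} already gives $\cD = \Sp{d-1}$ and the lemma is immediate (pick $\bv\perp\bu$ directly). Thus in all cases there is $\bv\in\cD$ with $\bu\cdot\bv\leq 0$, and symmetrically (replacing $\bu$ by $-\bu$, or running the same argument with liminf/limsup swapped) there is $\bw\in\cD$ with $\bu\cdot\bw\geq 0$.

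Finally, to get exact orthogonality: $\cD$ is a closed subset of the connected sphere $\Sp{d-1}$ containing a point $\bv$ with $\bu\cdot\bv\leq 0$ and a point $\bw$ with $\bu\cdot\bw\geq 0$. If $\cD$ itself were connected (it need not be), the intermediate value theorem applied to the continuous map $\bv\mapsto\bu\cdot\bv$ on $\cD$ would finish immediately. Since $\cD$ need not be connected, I would instead use the structural input that $\cD$ is not just closed but, by the zero--one law argument underlying Theorem~\ref{thm:D}, "thick" enough — or, more robustly, I would invoke a connectedness property of $\cD$ in the zero-drift case. The cleanest route is probably: the set of recurrent directions $L$ is a.s.\ equal to the closure of the set of directions along which $\|S_n\|\to\infty$, and continuity of trajectories at infinity (in the compactification $\bR$ of Section~\ref{sec:comments}) forces the \emph{set} of asymptotic directions of the piecewise-linear interpolation to be connected — hence $\cD$ is connected — at which point the intermediate value theorem delivers $\bv\in\cD$ with $\bu\cdot\bv = 0$. \textbf{The main obstacle} is exactly this last point: establishing (or correctly invoking) that $\cD$ is connected, or otherwise arguing that a closed subset of $\Sp{d-1}$ meeting both closed half-spaces $\{\bu\cdot\bx\leq 0\}$ and $\{\bu\cdot\bx\geq 0\}$ and arising as a recurrent-direction set must meet the equator $\{\bu\cdot\bx = 0\}$ — a closed but disconnected $\cD$ could a priori straddle the equator without touching it, so the proof genuinely needs the continuity-at-infinity structure of the walk, not just compactness of $\cD$.
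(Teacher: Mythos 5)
There is a genuine gap, and you have identified it yourself: your argument only produces points of $\cD$ in each of the two closed half-spaces $\{\bv : \bu\cdot\bv \le 0\}$ and $\{\bv : \bu\cdot\bv \ge 0\}$, and the passage from this to a point of $\cD$ on the equator $\{\bv : \bu\cdot\bv = 0\}$ does not follow from compactness alone. Your proposed remedy --- connectedness of $\cD$ --- is not available: nothing in the paper establishes it under the hypotheses of the lemma (indeed the structure of $\cD$ in this regime is left open in Problem~2), and it is false for general walks, since Theorem~\ref{thm:heavytails} shows that $\cD$ can be an \emph{arbitrary} closed subset of $\Sp{d-1}$. The ``continuity at infinity of the interpolated trajectory'' heuristic also fails: increments can be of the same order as $\|S_n\|$, so consecutive projections $\hat S_n, \hat S_{n+1}$ need not be close, and the limit set of $\hat S_n$ need not be connected. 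So as written the final step cannot be completed.

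The idea you are missing is to use the full strength of the recurrence of the one-dimensional projection, not just its sign oscillation. Since $\Exp|X\cdot\bu|<\infty$ and $\Exp(X\cdot\bu)=0$, the walk $S_n\cdot\bu$ is recurrent (Chung--Fuchs), so $\liminf_{n\to\infty}|S_n\cdot\bu|<\infty$ a.s. In the transient case (the recurrent case is immediate from Proposition~\ref{prop:recurrence}) we have $\|S_n\|\to\infty$ a.s., whence $\liminf_{n\to\infty}|\hat S_n\cdot\bu|=0$: for every $\eps>0$ the sequence $\hat S_n$ visits the compact band $O_\eps(\bu)=\{\bv\in\Sp{d-1}:|\bv\cdot\bu|\le\eps\}$ infinitely often, and hence has an accumulation point there, i.e.\ (using $\Pr(L=\cD)=1$ from Theorem~\ref{thm:D}) the band $O_\eps(\bu)$ contains a point of $\cD$. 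Taking $\eps=1/j$ gives $\bv_j\in\cD$ with $|\bv_j\cdot\bu|\to 0$, and compactness of $\cD$ (Lemma~\ref{lem:closed}) yields a subsequential limit $\bv\in\cD$ with $\bv\cdot\bu=0$. This localizes $\hat S_n$ \emph{near the equator} rather than merely on both sides of it, which is exactly what your half-space argument lacks, and no connectedness of $\cD$ is needed.
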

\begin{proof}
If $S_n$ is recurrent, then the result follows from Proposition~\ref{prop:recurrence}.
So suppose that $S_n$ is transient. Fix $\bu \in \Sp{d-1}$. For $\eps >0$,
let $O_\eps (\bu ) = \{ \bv \in \Sp{d-1} : | \bv \cdot \bu | \leq \eps \}$.
Since $\Exp ( X \cdot \bu ) = \mu \cdot \bu = 0$,
the random walk $S_n \cdot \bu$ is recurrent, and $\liminf_{n \to \infty} | S_n \cdot \bu | < \infty$. 
Since $S_n$ is transient we have $\| S_n \| \to \infty$, so that $\liminf_{n \to \infty} | \hat S_n \cdot \bu | = 0$.
In other words, for every $\eps >0$ we have that for infinitely many $n \in \N$, $\hat S_n$
is in the compact set $O_\eps (\bu)$. Hence $O_\eps (\bu)$ must contain an element of $\cD$.
Thus there is a sequence $\bv_1,\bv_2, \ldots \in \cD$ with $| \bv_j \cdot \bu | \to 0$,
and (since $\cD$ is compact) this sequence has a subsequence which converges to $\bv \in \cD$
with $\bv \cdot \bu = 0$.
\end{proof}

\begin{proof}[Proof of Proposition~\ref{prop:zero-drift}.]
Suppose, for the purpose of deriving a contradiction, that $\cD$ is countable.
Set $O  (\bu ) = \{ \bv \in \Sp{d-1} :  \bv \cdot \bu = 0 \}$.
Then $O = \cup_{\bu \in \cD} O (\bu)$ is a countable union
of subsets of $\Sp{d-1}$ of measure zero (since each $O(\bu)$
is a copy of $\Sp{d-2}$).
Thus $O$ is measure zero,
and so there exists $\bv \in \Sp{d-1} \setminus O$.
This $\bv$ has $\bv \cdot \bu \neq 0$ for all $\bu \in \cD$,
which contradicts Lemma~\ref{lem:orthogonal-span}.
Hence $\cD$ cannot be countable.
\end{proof}

To prove Proposition~\ref{prop:plane}, we need some additional notation.
Let
\begin{align*} \cD_1 & := \{ \bu \in \Sp{d-1} : \bu \in \cD, ~ - \bu \notin \cD \}, \\
\cD_2 & := \{ \bu \in \Sp{d-1} : \bu \in \cD, ~ - \bu \in \cD \}, \\
\cC_1 & := \{ \bu \in \Sp{d-1} : \bu \notin \cD, ~ - \bu \in \cD \} = - \cD_1 , \\
\cC_2 & := \{ \bu \in \Sp{d-1} : \bu \notin \cD, ~ - \bu \notin \cD \} .\end{align*}
Then $\cD = \cD_1 \cup \cD_2$ and $\Sp{d-1} \setminus \cD = \cC_1 \cup \cC_2$.

\begin{lemma}
\label{lem:antipodes}
Suppose that $d=2$, $\Exp \| X \| < \infty$, and $\mu = \0$. Then $\cC_2 = \emptyset$.
\end{lemma}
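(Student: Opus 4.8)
The statement to prove is that there is no direction $\bu \in \Sp{1}$ with both $\bu \notin \cD$ and $-\bu \notin \cD$. The plan is to argue by contradiction: suppose such a $\bu$ exists. Since $\bu \notin \cD$ and $-\bu \notin \cD$, Proposition~\ref{prop:D-cones} gives radii $r_1, r_2 > 0$ with $\Pr(A(\bu; r_1)) = 0$ and $\Pr(A(-\bu; r_2)) = 0$; taking $r := \min(r_1, r_2)$ we have, a.s., $\hat S_n \notin B_s(\bu; r) \cup B_s(-\bu; r)$ for all but finitely many $n$. In $d=2$, the complement of these two antipodal spherical caps is a pair of disjoint closed arcs $I_+, I_-$ (the two ``sides'' of the circle separating $\bu$ from $-\bu$), each strictly contained in an open half-circle. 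So, a.s., for all large $n$, $\hat S_n$ lies in $I_+ \cup I_-$.

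The key step is to extract a contradiction from the recurrence of a one-dimensional projection. Pick the unit vector $\bw := \bu^\per$ orthogonal to $\bu$. The two arcs $I_+$ and $I_-$ are separated precisely by the requirement $\hat S_n \cdot \bw > \delta$ versus $\hat S_n \cdot \bw < -\delta$ for a suitable $\delta > 0$ depending on $r$ (since staying away from $\pm\bu$ by angular distance bounded below forces $|\hat S_n \cdot \bw|$ bounded below). Now I would use the zero-drift hypothesis: $\Exp(X \cdot \bw) = \mu \cdot \bw = 0$ and $\Exp|X \cdot \bw| < \infty$, so by the one-dimensional trichotomy the walk $S_n \cdot \bw$ is recurrent, hence changes sign infinitely often and in fact $\liminf_n |S_n \cdot \bw| < \infty$ while $\limsup_n (S_n \cdot \bw) = +\infty$, $\liminf_n(S_n\cdot\bw) = -\infty$. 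By Corollary~\ref{cor:not-full-sphere}, since $\cD \neq \Sp{1}$ (it omits $\bu$), $S_n$ is transient, so $\|S_n\| \to \infty$. Therefore $\hat S_n \cdot \bw = (S_n \cdot \bw)/\|S_n\|$ takes both arbitrarily-close-to-positive and arbitrarily-close-to-negative values infinitely often — but more to the point, because $S_n \cdot \bw$ is recurrent there are infinitely many $n$ with $|S_n \cdot \bw|$ bounded (say $\leq 1$), and for those $n$, $|\hat S_n \cdot \bw| = |S_n \cdot \bw|/\|S_n\| \to 0$ along that subsequence. Then $\hat S_n$ is in the compact cap $\{|\bv \cdot \bw| \le \delta\}$ infinitely often, i.e.\ near $\pm\bu$ — contradicting that $\hat S_n$ eventually avoids $B_s(\bu;r) \cup B_s(-\bu;r)$.

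The main obstacle I anticipate is purely geometric bookkeeping: verifying that ``$\hat S_n$ avoids the two $r$-caps around $\pm\bu$'' translates cleanly into ``$|\hat S_n \cdot \bw| \geq \delta$ eventually'' for an appropriate $\delta(r) > 0$, and conversely that the recurrence of $S_n \cdot \bw$ forces $\hat S_n$ into the region $\{|\bv \cdot \bw| < \delta\}$ infinitely often. This is elementary in $d=2$ because the circle minus two antipodal caps is exactly two arcs, and $\bv \mapsto \bv \cdot \bw$ is monotone on each arc; one just needs $\| \hat \bx - \bu\| \ge r$ and $\|\hat\bx - (-\bu)\| \ge r$ to imply $|\hat\bx \cdot \bw| \ge \delta$, which follows from $\|\hat\bx - \bu\|^2 = 2 - 2\hat\bx\cdot\bu$ and $(\hat\bx\cdot\bu)^2 + (\hat\bx\cdot\bw)^2 = 1$. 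One should also note explicitly that this argument is special to $d=2$: for $d \ge 3$ the complement of two antipodal caps is connected, so there is no forced sign split, which is exactly why Lemma~\ref{lem:orthogonal-span} is the right tool in general dimension but the stronger conclusion $\cC_2 = \emptyset$ holds only in the plane.
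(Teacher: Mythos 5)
Your proposal is correct, and its engine is the same as the paper's: the paper deduces the lemma in one line from Lemma~\ref{lem:orthogonal-span}, whose proof is exactly your argument (zero drift and $\Exp\|X\|<\infty$ make the orthogonal one-dimensional projection $S_n\cdot\bw$ recurrent, while transience of $S_n$ forces $|\hat S_n\cdot\bw|\to 0$ along a subsequence, so $\hat S_n$ enters the caps around $\pm\bu$ infinitely often). The only difference is packaging: you inline that argument as a contradiction via Proposition~\ref{prop:D-cones} (cap avoidance) instead of first extracting an exact element of $\cD$ orthogonal to $\bw$ by compactness, which is a harmless and equally valid variant.
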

\begin{proof}
Lemma~\ref{lem:orthogonal-span} shows that for every $\bu \in \Sp{1}$, there exists $\bv \in \Sp{1}$ 
such that $\bu \cdot \bv = 0$ and $\bv \in \cD$.
As $\bu$ runs over $\Sp{1}$, the set of $\pm \bv$ such that $\bu \cdot \bv = 0$
runs over the whole of $\Sp{1}$, and so in this case we conclude that
for every $\bu \in \Sp{d-1}$, at least one of $\pm \bu$ is in $\cD$.
Hence $\cC_2 = \emptyset$.
\end{proof}

\begin{proof}[Proof of Proposition~\ref{prop:plane}.]
Note that $| \cD | = | \cD_1 | + | \cD_2 |$.
If  Lemma~\ref{lem:antipodes} applies, then we have $| \Sp{1} \setminus \cD |  =  | \cC_1 | = | \cD_1 |$.
Hence $| \Sp{1} | = 2 | \cD_1 | + | \cD_2 |$, and part~(i) follows.
If $X \eqd -X$, then $\cD = - \cD$, so $\cD_1 = \cC_1 = \emptyset$. Thus $\Sp{d-1} = \cD_2 \cup \cC_2$.
If  Lemma~\ref{lem:antipodes} applies, then $\cD = \cD_2 = \Sp{1}$, giving part~(ii).
\end{proof}

\section{An arbitrary set of recurrent directions}
\label{sec:erickson}

We know from Theorem~\ref{thm:D} that the set $\cD$ is closed. The aim of this section is to show that
there are, in general, no other restrictions on $\cD$: it can be an arbitrary closed subset of the sphere.
This result is essentially due to Erickson~\cite[pp.~508--510]{erickson00}; we reproduce the argument here.

\begin{theorem}
\label{thm:heavytails}
Let $A$ be a non-empty closed subset of $\Sp{d-1}$. Suppose that the increment distribution of the random walk is given by
 $X =Q \xi$ where $Q \in \Sp{d-1}$ and $\xi \in\RP$ are independent, $\Pr ( \xi > 0 ) >0$, and $\supp Q = A$.
Let $\xi_1, \xi_2, \ldots$ be independent copies of $\xi$, and suppose that
\begin{equation}
\label{eq:max-dominates}
\lim_{n \to \infty} \frac{\max_{1 \leq i \leq n} \xi_i}{\sum_{i=1}^n \xi_i} = 1, \as 
\end{equation}
Then the recurrent directions of the random walk $S_n=\sum_{i=1}^{n}X_i$ are $\cD=A$.
\end{theorem}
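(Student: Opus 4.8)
The plan is to show $\cD = A$ by two inclusions, exploiting the assumption \eqref{eq:max-dominates} that a single large jump dominates the trajectory. First I would establish $A \subseteq \cD$. Fix $\bu \in A = \supp Q$ and $r>0$; I want $\bu \in \cD$, so by Proposition~\ref{prop:D-cones} it suffices to show $\Pr(A(\bu;r)) = 1$, i.e.\ that $S_n \in C(\bu;r)$ infinitely often almost surely. The idea is that whenever the $n$-th increment $X_n = Q_n \xi_n$ has its direction $Q_n$ close to $\bu$ and $\xi_n$ is large relative to $\|S_{n-1}\|$, the new position $S_n$ points roughly in direction $\bu$. Since $\Pr(Q \in B_s(\bu;\delta)) > 0$ for every $\delta>0$ (as $\bu \in \supp Q$) and $\Pr(\xi>0)>0$, there are infinitely many indices $n$ (by Borel--Cantelli, second part, using independence) at which $Q_n \in B_s(\bu; \delta)$ and $\xi_n > t$ for suitable fixed small $\delta$ and $t>0$. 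The subtlety is controlling $\|S_{n-1}\|$: but along this subsequence, after the first such jump, the condition \eqref{eq:max-dominates} forces $\hat S_n \to$ something; more carefully, I would argue that for the subsequence of record times (times $n$ at which $\xi_n = \max_{i\le n}\xi_i$), \eqref{eq:max-dominates} gives $\| S_n - X_n \| / \|X_n\| \to 0$, hence $\| \hat S_n - Q_n \| \to 0$. Intersecting the record times with the event that infinitely many records have direction within $\delta$ of $\bu$ (which has probability one, since records occur infinitely often and their directions are i.i.d.\ copies of $Q$, independent of which times are records), I get $\hat S_n \in B_s(\bu; r)$ i.o.\ for $n$ along that sub-subsequence. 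This yields $\bu \in \cD$, and since $\cD$ is closed (Theorem~\ref{thm:D}) and contains the dense-in-$A$ set we just produced — or directly, since the argument works for every $\bu \in A$ — we get $A \subseteq \cD$.

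For the reverse inclusion $\cD \subseteq A$, suppose $\bu \in \Sp{d-1} \setminus A$. Since $A$ is closed, there is $r_0 > 0$ with $B_s(\bu; r_0) \cap A = \emptyset$, so $\Pr(Q \in B_s(\bu; r_0)) = 0$; every increment direction $Q_n$ stays at distance $\ge r_0$ from $\bu$ a.s. Now \eqref{eq:max-dominates} says the walk is, asymptotically, essentially a single large increment: writing $i_n := \argmax_{1\le i \le n} \xi_i$, we have $\xi_{i_n} / \sum_{i=1}^n \xi_i \to 1$, so $\| S_n - X_{i_n} \| \le \sum_{i \ne i_n} \xi_i = o(\|X_{i_n}\|)$, whence $\| \hat S_n - Q_{i_n} \| \to 0$ a.s.\ (on the event $\|S_n\|\to\infty$, which holds here since $\xi_{i_n}\to\infty$ as \eqref{eq:max-dominates} with $\Pr(\xi>0)>0$ forces the partial sums to diverge). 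But $Q_{i_n}$ is one of the increment directions, hence at distance $\ge r_0$ from $\bu$; therefore $\liminf_n \| \hat S_n - \bu \| \ge r_0 - \lim_n\|\hat S_n - Q_{i_n}\| = r_0 > 0$ a.s., so $\bu \notin \cD$. This shows $\cD \subseteq A$, and combined with the first part gives $\cD = A$.

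I expect the main obstacle to be the careful bookkeeping in the first inclusion: unlike the second inclusion, where the dominating jump is automatically present and one only needs its direction to be \emph{far} from $\bu$, for $A\subseteq\cD$ I need to \emph{engineer} infinitely many times at which the running maximum jump points \emph{near} $\bu$ and simultaneously dominates. The clean way is to condition on the (a.s.\ infinite) sequence of record times $n_1 < n_2 < \cdots$; the increments $X_{n_1}, X_{n_2}, \ldots$ are \emph{not} i.i.d.\ (their magnitudes are ordered), but their \emph{directions} $Q_{n_1}, Q_{n_2}, \ldots$ are i.i.d.\ copies of $Q$ and independent of the record-time structure, so a Borel--Cantelli argument gives infinitely many records with direction in $B_s(\bu;\delta)$. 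At each such record time $m$, \eqref{eq:max-dominates} gives $\|\hat S_m - Q_m\| \to 0$ along records, so $\hat S_m \in B_s(\bu; \delta + o(1)) \subseteq B_s(\bu;r)$ eventually. One should double-check the independence claim — that the set of record times is determined by the $\xi_i$ alone and is independent of the $Q_i$ — which is immediate since $X_i = Q_i\xi_i$ with $Q \perp \xi$ and the construction uses independent copies. A minor additional point worth a line: \eqref{eq:max-dominates} together with $\Pr(\xi>0)>0$ implies $\sum_{i=1}^n \xi_i \to \infty$ a.s.\ (otherwise the ratio could not tend to $1$ while new positive jumps keep arriving), so $\|S_n\|$ is unbounded and the random walk is transient, legitimizing the passage from $S_n$ to $\hat S_n$ throughout.
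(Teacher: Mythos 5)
Your proposal is correct and follows essentially the same route as the paper's proof: both hinge on the estimate $\|\hat S_n - Q_{k(n)}\| \to 0$ for the direction of the running maximal jump (via \eqref{eq:max-dominates}), together with the observation that the directions attached to the record times form an i.i.d.\ sequence of copies of $Q$ independent of the $\xi$'s, yielding $A \subseteq \cD$ by a second Borel--Cantelli argument and $\cD \subseteq A$ by the reverse triangle inequality with $\Pr(Q \in B_s(\bu;r_0)) = 0$. The auxiliary points you flag (records occurring infinitely often, $T_n \to \infty$, transience) are handled the same way in the paper.
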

\begin{remarks}
\begin{myenumi}
\setlength{\itemsep}{0pt plus 1pt}
\item[{\rm (a)}] 
 Pruitt, in Theorem~2 of~\cite{pruitt}, shows that~\eqref{eq:max-dominates} 
holds if and only if $\sum_{k \geq 1} u_k^2 < \infty$, where $u_k = \Pr ( 2^k < \xi \leq 2^{k+1} ) / \Pr (2^k < \xi)$.
Examples that work have very heavy tails, and include
$\Pr ( \xi > r ) = 1/\log r$ for $r \geq \re$ (see \cite[pp.~509--510]{erickson00})
and $\Pr ( \xi > r ) = \exp ( - (\log r)^\beta )$ for $r \geq 1$ with $\beta \in (0,1/2)$
(see \cite[p.~895]{pruitt}).
\item[{\rm (b)}] 
The intuition behind Theorem~\ref{thm:heavytails} is as follows. The condition~\eqref{eq:max-dominates} means that the biggest jump so far is a.s.~on a bigger scale than all the other
jumps combined, and so the projection on the sphere is determined by the $Q$ corresponding to the current biggest jump. As times goes on,
one sees an i.i.d.~subsequence of the $Q$s associated with the biggest jumps, and so the walk explores the sphere over the set $A$. 
\item[{\rm (c)}] 
Theorem~\ref{thm:heavytails} can be compared to the construction of random walks
with desired limit properties of~\cite{kesten70,ek,erickson76,erickson00}.
\end{myenumi}
\end{remarks}

\begin{proof}[Proof of Theorem~\ref{thm:heavytails}.]
Write $X_i = Q_i \xi_i$ where the $Q_i$ are i.i.d.~copies of $Q$ and the $\xi_i$ are i.i.d.~copies of $\xi$.
Let $T_n = \sum_{i=1}^n \xi_i$, $M_n = \max_{1 \leq i \leq n} \xi_i$, and $B_n = T_n - M_n$; then~\eqref{eq:max-dominates}
is equivalent to $B_n/M_n \to 0$, a.s.  
Also set $k(1):=1$ and, for $n \in \N$,
\[ k(n+1) := \begin{cases} k(n) & \text{if } \xi_{n+1} \leq M_n,\\
n+1 & \text{if } \xi_{n+1} > M_n . \end{cases} \]
Then $M_n = \xi_{k(n)}$. Define $R_n := S_n - M_n Q_{k(n)}$.
Since $\| Q_{k(n)} \| = 1$, repeated application of the triangle inequality yields
\begin{align*}
\| \hat S_n - Q_{k(n)} \| & = \left\| \frac{M_n Q_{k(n)} + R_n - \| S_n\| Q_{k(n)}}{\| S_n \|} \right\| \\
& \leq \frac{| M_n - \| S_n \| | }{\| S_n \|} +  \frac{\| R_n \|}{\| S_n \|} \\
& \leq \frac{2 \| R_n \|}{M_n - \| R_n \|} .\end{align*}
But $\| R_n \| =  \| \sum_{i \in \{1,\ldots , n\} \setminus \{ k(n)\}} X_i \| \leq B_n$ where $B_n = T_n - M_n$, so
\[ \| \hat S_n - Q_{k(n)} \|  \leq \frac{2 (B_n/M_n)}{1 - (B_n/M_n)} \to 0 , \as, \]
by~\eqref{eq:max-dominates}.

Since $M_n$ is a non-decreasing sequence in $\RP$ with $M_n \to \infty$ a.s.~(as easily follows from~\eqref{eq:max-dominates} and the fact that $\Pr ( \xi > 0 ) >0$) the sequence $k(1), k(2), \ldots$
is a non-decreasing subsequence of $\ZP$ with $k(n) \to \infty$ a.s., and since the $Q_i$ are independent of the $\xi_i$, the sequence
$k(1), k(2), \ldots$ is independent of the sequence $Q_1, Q_2, \ldots$. 
Let $\ell_1 = 1$ and for $n \in \N$ define $\ell_{n+1} = \min \{ m > \ell_n : k(m) > k(\ell_n) \}$,
so that $1 = k(\ell_1) < k(\ell_2) < k(\ell_3) < \cdots$.
Then the sequence
$Q_{k(\ell_1)}, Q_{k(\ell_2)}, \ldots$
has the same law as a sequence of i.i.d.~copies of $Q$. Hence if $\bu \in A$ we have
\[ \liminf_{n \to \infty} \| \hat S_n - \bu\| \leq \lim_{n \to \infty}  \| \hat S_n - Q_{k(\ell_n)} \| + \liminf_{n \to \infty} \| Q_{k(\ell_n)} - \bu\| =0, \as \]
Thus $\bu \in \cD$.	This shows that $A \subseteq \cD$.

On the other hand, if $\bu \notin A$ we have that since $\Sp{d-1} \setminus A$ is open in $\Sp{d-1}$ there is some $r >0$ such that $\Pr ( Q \in B_s (\bu ; r ) ) = 0$,
and 
\[ \liminf_{n \to \infty} \| \hat S_n - \bu\| \geq  \liminf_{n \to \infty} \| Q_{k(\ell_n)} - \bu\| - \lim_{n \to \infty}  \| \hat S_n - Q_{k(\ell_n)} \| \geq r , \as ,\]
so that $\bu \notin \cD$. Thus $\cD \subseteq A$ and the proof is complete.
\end{proof}

\section{Convexity and an upper bound}
\label{sec:convexity}

We start this section with a straightforward result (Theorem~\ref{thm:support_bound})
that is sometimes useful for giving an upper bound on $\cD$ in terms of the support of $\hat S_n$.
We then present (in Proposition~\ref{prop:s-hull} below)
a simpler description of the upper bound in terms of the distribution of $X$ alone, rather than its convolutions. To do so, we need an
appropriate notion of convexity, which will also be useful in Sections~\ref{sec:projections} and~\ref{sec:hull} below when we look at one-dimensional projections and the convex hull of the walk.

Let $\cX_n = ( \supp \hat S_n ) \setminus \{ \0 \}$, and let
$\cX^\star = \cl ( \cup_{n \geq 1} \cX_n )$. Here is the upper bound.

\begin{theorem}
\label{thm:support_bound}
We have that  $\cD \subseteq \cX^\star$.
\end{theorem}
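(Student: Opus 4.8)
The plan is to show that every recurrent direction $\bu \in \cD$ lies in the closed set $\cX^\star = \cl\bigl(\bigcup_{n \geq 1} \cX_n\bigr)$, where $\cX_n = (\supp \hat S_n) \setminus \{\0\}$. The idea is that if $\bu$ is a recurrent direction, then $\hat S_n$ must come arbitrarily close to $\bu$ with positive probability (indeed, by the Hewitt--Savage zero--one law and Proposition~\ref{prop:D-cones}, with probability one and infinitely often), and this forces $\bu$ to be approximable by points of the supports $\cX_n$.

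First I would fix $\bu \in \cD$ and an arbitrary $\eps > 0$. By Proposition~\ref{prop:D-cones} we have $\Pr(A(\bu;\eps)) = 1$, so in particular $\Pr(\hat S_n \in B_s(\bu;\eps) \text{ for some } n \geq 1) = 1 > 0$. Hence there exists at least one $n = n(\eps) \geq 1$ with $\Pr(\hat S_n \in B_s(\bu;\eps)) > 0$. This positivity says precisely that the open set $\{\bx \in \R^d \setminus \{\0\} : \|\hat\bx - \bu\| < \eps\}$ has positive $S_n$-mass, so it must intersect $\supp S_n$; equivalently, there is a point $\by \in \cX_n$ with $\|\hat\by - \bu\| < \eps$. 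Thus $\bu$ lies within Euclidean distance $\eps$ of a point of $\bigcup_{n \geq 1} \cX_n$.

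Since $\eps > 0$ was arbitrary, $\bu$ is a limit point of $\bigcup_{n \geq 1} \cX_n$, hence $\bu \in \cl\bigl(\bigcup_{n \geq 1} \cX_n\bigr) = \cX^\star$. As $\bu \in \cD$ was arbitrary, this gives $\cD \subseteq \cX^\star$.

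I do not expect any serious obstacle here; the only point requiring a little care is the passage from "$\bu$ is an a.s.\ recurrent direction" to "$\Pr(\hat S_n \in B_s(\bu;\eps)) > 0$ for some $n \geq 1$," which is handled cleanly by Proposition~\ref{prop:D-cones} (that $\Pr(A(\bu;\eps)) = 1$) together with the elementary observation that an event of probability one is in particular nonempty, so some term in the countable union $\bigcup_{n \geq 1}\{\hat S_n \in B_s(\bu;\eps)\}$ must have positive probability. One should also note the harmless bookkeeping that $\hat S_n \neq \0$ on the event in question, so the relevant point of $\supp S_n$ genuinely lies in $\cX_n$ rather than being the excluded origin.
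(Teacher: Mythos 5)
Your argument is correct and is essentially the paper's proof read in the contrapositive direction: both rest on Proposition~\ref{prop:D-cones} together with the duality that an open set charged by the law of $\hat S_n$ must meet its support (the paper instead starts from $\bu \notin \cX^\star$, deduces $\Pr(\hat S_n \in B_s(\bu;r)) = 0$ for every $n$, and applies the Borel--Cantelli lemma where you use countable subadditivity). One small tidy-up: to pass from a point of $\supp S_n$ to a point of $\cX_n = (\supp \hat S_n)\setminus\{\0\}$ you should either invoke Lemma~\ref{lem:sphere-support} with $X = S_n$, or more simply argue directly that $\Pr(\hat S_n \in B(\bu;\eps)) > 0$ forces $\supp \hat S_n \cap B(\bu;\eps) \neq \emptyset$, with $\eps < 1$ guaranteeing that this support point is not $\0$.
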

\begin{proof}
Suppose that $\bu \in \Sp{d-1} \setminus \cX^\star$.
Since $\cX^\star$ is closed, there exists $r>0$ such that $B_s ( \bu ; r) \cap \cX_n = \emptyset$ for all $n \in \N$,
and so $\Pr ( \hat S_n \in B_s ( \bu ; r ) ) = 0$ for all $n \in \N$. Then the Borel--Cantelli lemma shows that 
$\Pr ( A (\bu ; r ) ) = \Pr ( \hat S_n \in B_s ( \bu ; r) \io ) = 0$. Hence, by Proposition~\ref{prop:D-cones}, we have $\bu \notin \cD$.
Hence $\cD \subseteq \cX^\star$.
\end{proof}

For $\bu, \bv \in \Sp{d-1}$ and $\alpha \in [0,1]$, let
\[ I_\alpha ( \bu, \bv ) := \frac{\alpha \bu + (1-\alpha) \bv}{\| \alpha \bu + (1-\alpha) \bv \| } ,\]
unless $\bu = -\bv$ and $\alpha = 1/2$, in which case we set $I_{1/2} (\bu, -\bu) := \0$.
If $\bu \neq - \bv$, set $I (\bu, \bv) := \{ I_\alpha (\bu,\bv) : \alpha \in [0,1] \}$, and
set $I (\bu, -\bu ) := \{ \bu, -\bu\}$ (i.e., ignore $\alpha = 1/2$).

\begin{definition}
\label{def:s-convex}
Say that $A \subseteq \Sp{d-1}$ is \emph{s-convex} if for every $\bu , \bv \in A$, one has $I (\bu, \bv) \subseteq A$.
\end{definition}

Note that we only need  to check the condition in Definition~\ref{def:s-convex} for $\bv \neq -\bu$.
In words, $A \subseteq \Sp{d-1}$ is s-convex if for any $\bu, \bv \in A$, the radial projection onto $\Sp{d-1}$ of the straight line segment from
$\bu$ to $\bv$ in $\R^d$ lies in $A$. See also Lemma~\ref{lem:s-convex-cone} below.

Denote by $\hull A$ the convex hull of $A \subseteq \R^d$.
For $A \subseteq \Sp{d-1}$, define
\[ \shull A :=\{ \hat \bx : \bx \in \hull A ,\, \bx \neq \0 \} .\]  
We will show (see Lemma~\ref{lem:smallest-s-convex}) that $\shull A$ is s-convex. 
Let $\cX := ( \supp \hat X ) \setminus \{ \0 \}$. 

\begin{proposition}
\label{prop:s-hull}
We have that $\cX^\star = \cl \shull \cX$, and $\cX^\star$ is s-convex.
\end{proposition}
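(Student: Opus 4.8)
The plan is to prove Proposition~\ref{prop:s-hull} in two parts: first the set identity $\cX^\star = \cl\shull\cX$, and then s-convexity. For the identity, recall $\cX^\star = \cl(\cup_{n\geq 1}\cX_n)$ with $\cX_n = (\supp\hat S_n)\setminus\{\0\}$ and $\cX = \cX_1$. The inclusion $\cX^\star \subseteq \cl\shull\cX$ is the crux: I would show that for each $n$, $\cX_n \subseteq \shull\cX$, from which $\cup_n \cX_n \subseteq \shull\cX$ and hence $\cX^\star \subseteq \cl\shull\cX$ (using that $\shull\cX$ need not be closed, so the closure on the right is genuinely needed). To see $\cX_n \subseteq \shull\cX$, take $\bu \in \supp\hat S_n$ with $\bu\neq\0$; then there is a point $\by$ in $\supp S_n$ with $\hat\by = \bu$ and $\by\neq\0$. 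Since $S_n = X_1+\cdots+X_n$ is a sum of $n$ i.i.d.\ copies of $X$, a standard fact about supports of convolutions gives $\supp S_n \subseteq \{\bz_1+\cdots+\bz_n : \bz_i \in \supp X\}$ (actually the closure of this sumset, but that is harmless after taking closures), so $\by = \bz_1+\cdots+\bz_n$ with $\bz_i\in\supp X$. Writing $\by = n\cdot\frac{1}{n}\sum_i \bz_i$, the point $\frac1n\sum_i\bz_i$ is a convex combination of the $\bz_i$. I need to relate $\bz_i\in\supp X$ to $\cX = (\supp\hat X)\setminus\{\0\}$: if $\bz_i\neq\0$ then $\hat\bz_i\in\cX$ and $\bz_i = \|\bz_i\|\hat\bz_i$ is a nonnegative multiple of a point of $\cX$; the zero terms contribute nothing. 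Hence $\by$ is a nonnegative linear combination of points of $\cX$, i.e.\ $\by$ lies in the convex cone generated by $\cX$, so $\hat\by = \bu \in \shull\cX$ provided $\by\neq\0$, which holds. (Here I should note $\shull A$ for $A\subseteq\Sp{d-1}$ consists of radial projections of nonzero points of $\hull A$, and a nonzero point of the convex cone on $A$ radially projects into $\shull A$ since scaling doesn't change the direction.)

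For the reverse inclusion $\cl\shull\cX \subseteq \cX^\star$, since $\cX^\star$ is closed it suffices to show $\shull\cX \subseteq \cX^\star$. Take $\bu = \hat\bx$ with $\bx\in\hull\cX$, $\bx\neq\0$. Then $\bx = \sum_{i=1}^m \alpha_i \bv_i$ with $\alpha_i\geq 0$, $\sum\alpha_i = 1$, $\bv_i\in\cX\subseteq\Sp{d-1}$ (by Carathéodory we can take $m\leq d+1$). Each $\bv_i\in\supp\hat X$, so $\hat X$ puts positive mass near each $\bv_i$; more usefully, $\bv_i$ being in $\supp\hat X$ does not immediately put $\bv_i$ itself in $\supp X$, but it does say $\supp X$ meets every cone $C(\bv_i;r)$. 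The cleanest route: rational-combination approximation. Choose rationals $q_i\geq 0$ approximating $\alpha_i$ with $\sum q_i = 1$; then pick a large $N$ so that $N q_i \in \ZP$ for all $i$, say $N q_i = k_i$ with $\sum k_i = N$. For each $i$ and any $r>0$, $\Pr(\hat X \in B_s(\bv_i;r)) > 0$, so $\Pr(X \in C(\bv_i;r)) > 0$; conditioning on each $X_j$ (for $j$ in a block of size $k_i$) landing in a thin cone around $\bv_i$ with comparable norms, the sum $S_N$ lands near a positive multiple of $\sum k_i \bv_i = N\bx$, hence $\hat S_N$ lands near $\hat{N\bx} = \bu$; this occurs with positive probability, so $\bu$ (or a point arbitrarily close to it) is in $\supp\hat S_N \subseteq \cX^\star$. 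Since $\cX^\star$ is closed and $r$ can be taken arbitrarily small, $\bu\in\cX^\star$. I expect this positive-probability estimate — making the directions \emph{and} the magnitudes of the block-sums cooperate so that the resulting sum points in the desired direction — to be the main technical obstacle; it requires a little care because $\supp X$ might only meet the cones at points of widely varying norm, but one can always rescale by choosing $r$ small and using that $\bx\neq\0$ forces $\hat{N\bx}$ to depend continuously on small perturbations.

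For the s-convexity assertion, I would invoke Lemma~\ref{lem:smallest-s-convex} (stated later in the excerpt as asserting $\shull A$ is s-convex for any $A\subseteq\Sp{d-1}$), so $\shull\cX$ is s-convex; it then remains to check that the closure of an s-convex subset of $\Sp{d-1}$ is s-convex. This is a continuity argument: if $\bu,\bv\in\cl\shull\cX$ with $\bv\neq-\bu$, take sequences $\bu_k,\bv_k\in\shull\cX$ with $\bu_k\to\bu$, $\bv_k\to\bv$; for each fixed $\alpha\in[0,1]$, $I_\alpha(\bu_k,\bv_k)\in\shull\cX$ and, since the denominator $\|\alpha\bu + (1-\alpha)\bv\|$ is bounded below away from zero (as $\bv\neq-\bu$ and $\bu,\bv$ are unit vectors — the worst case $\alpha=1/2$ gives $\|\frac12(\bu+\bv)\| > 0$), the map $(\bu',\bv')\mapsto I_\alpha(\bu',\bv')$ is continuous near $(\bu,\bv)$, so $I_\alpha(\bu_k,\bv_k)\to I_\alpha(\bu,\bv)$, giving $I_\alpha(\bu,\bv)\in\cl\shull\cX$. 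Hence $I(\bu,\bv)\subseteq\cl\shull\cX$ and $\cl\shull\cX$ is s-convex. I should remark that the antipodal edge case ($\bv=-\bu$) is handled by the convention $I(\bu,-\bu) = \{\bu,-\bu\}$, which is automatically contained in any set containing $\bu$ and $-\bu$.

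One subtlety to flag throughout: the precise statement of the convolution-support fact. The clean inclusion is $\supp(X_1+\cdots+X_n) \subseteq \cl\{\bz_1+\cdots+\bz_n : \bz_i\in\supp X\}$; the extra closure is absorbed because I am ultimately taking $\cl$ of everything, but I would state it carefully to avoid a gap. With these pieces — the sumset description of $\supp S_n$, the cone-hitting positive-probability construction for the reverse inclusion, Lemma~\ref{lem:smallest-s-convex}, and the elementary fact that closures of s-convex sets are s-convex — the proposition follows.
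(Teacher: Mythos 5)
Your first inclusion ($\cX^\star \subseteq \cl\shull\cX$) is sound and follows a genuinely different route from the paper, which instead does an induction on $n$, writing $\hat S_{n+1}$ as a spherical interpolation $I_{\alpha_n}(\hat S_n,\hat X_{n+1})$ and using that $\cl\shull\cX$ is s-convex. Your sumset argument works once you weaken ``$\cX_n\subseteq\shull\cX$'' to ``$\cX_n\subseteq\cl\shull\cX$'': a point of $\supp\hat S_n$ need not be exactly $\hat\by$ for some $\by\in\supp S_n$, and such a $\by$ need not be exactly a sum of support points of $X$, but both defects are absorbed by the closure, as you essentially note.

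The reverse inclusion is where there is a genuine gap. Your block construction fixes the proportions of increments in advance: roughly $k_i\approx N\alpha_i$ increments are steered into a thin cone about $\bv_i$. But the magnitudes of the support points of $X$ available in those cones are dictated by the distribution and can differ wildly from one cone to another, and conditioning does not let you choose ``comparable norms''. Concretely, if $\supp X=\{10^6\be_1,\,\be_2\}$ and $\bu$ is the direction of $\tfrac12\be_1+\tfrac12\be_2$, your recipe with $k_1=k_2=N/2$ forces $S_N=(N/2)(10^6\be_1+\be_2)$, whose direction stays near $\be_1$ however small $r$ is; shrinking $r$ or invoking continuity of $\bx\mapsto\hat\bx$ cannot repair this, because the error is not small. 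What is needed is to let the \emph{numbers} of increments aimed at the different directions grow in ratios tuned to the norms of the chosen support points, so that the ratio of the accumulated weights converges to the prescribed $\alpha$. This is exactly how the paper proceeds: it proves s-convexity of $\cX^\star$ directly by observing that $s\bx+t\by\in\supp S_{sn+tm}$ for $\bx\in\supp S_n$, $\by\in\supp S_m$ and $s,t\in\ZP$, then letting $s,t\to\infty$ so that $s\|\bx\|/(s\|\bx\|+t\|\by\|)$ tends to any desired $\alpha\in[0,1]$; after that, Lemma~\ref{lem:smallest-s-convex} (minimality of $\shull$) together with closedness of $\cX^\star$ yields $\cl\shull\cX\subseteq\cX^\star$ with no positive-probability estimate at all. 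Your final paragraph (s-convexity of $\cl\shull\cX$ via Lemma~\ref{lem:smallest-s-convex} plus the closure argument, which is the paper's Lemma~\ref{lem:closure}) is fine once the set identity is in place.
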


We work towards a proof of Proposition~\ref{prop:s-hull}.
Let $\cX' := \{ \hat \bx : \bx \in   \supp X \}$. 

\begin{lemma}
\label{lem:sphere-support}
For $X \in \R^d$ any random variable,
we have that $\cX = ( \cl \cX' ) \setminus \{ \0 \}$.
\end{lemma}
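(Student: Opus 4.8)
The plan is to show the set equality $\cX = (\cl\cX')\setminus\{\0\}$ by proving both inclusions, where $\cX = (\supp\hat X)\setminus\{\0\}$ and $\cX' = \{\hat\bx : \bx\in\supp X\}$. The key technical point is that the map $\bx\mapsto\hat\bx$ is continuous on $\R^d\setminus\{\0\}$ but behaves badly at the origin, so one must carefully track whether $\0$ lies in $\supp X$ and how mass near $\0$ affects $\supp\hat X$.

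First I would recall that $\supp\hat X$ is by definition the smallest closed set carrying full mass of the random vector $\hat X$, and note $\hat X$ takes values in $\Sp{d-1}\cup\{\0\}$, with $\hat X = \0$ exactly when $X=\0$. For the inclusion $\cX\subseteq(\cl\cX')\setminus\{\0\}$: take $\bu\in\cX$, so $\bu\in\Sp{d-1}$ and every open neighbourhood $N$ of $\bu$ has $\Pr(\hat X\in N)>0$. Shrinking $N$ so it avoids $\0$, the preimage $\{\bx\ne\0 : \hat\bx\in N\}$ is an open cone with positive probability, hence meets $\supp X$ at some $\bx\ne\0$; then $\hat\bx\in N$, so $N\cap\cX'\ne\emptyset$. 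As this holds for all such $N$, $\bu\in\cl\cX'$, and $\bu\ne\0$ since $\bu\in\Sp{d-1}$. For the reverse inclusion $(\cl\cX')\setminus\{\0\}\subseteq\cX$: since $\cX$ is closed it suffices to show $\cX'\setminus\{\0\}\subseteq\cX$ — but actually one wants $\cl\cX'\setminus\{\0\}\subseteq\cX$, and because $\cX=(\supp\hat X)\setminus\{\0\}$ is closed in $\Sp{d-1}$ hence closed in $\R^d$ (as a subset of the compact sphere), it is enough to show $\cX'\subseteq\cX\cup\{\0\}$. So take $\bu=\hat\bx\in\cX'$ with $\bx\in\supp X$; if $\bx=\0$ then $\bu=\0$ and we are done, so assume $\bx\ne\0$, giving $\bu\in\Sp{d-1}$. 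Any open neighbourhood of $\bu$ in $\Sp{d-1}$ pulls back under $\hat{\cdot}$ to an open cone containing $\bx$, which has positive probability because $\bx\in\supp X$; hence $\bu\in\supp\hat X$, so $\bu\in\cX$.

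The one subtlety — and the step I expect to need the most care — is the treatment of the origin: $\supp\hat X$ may or may not contain $\0$ (it does iff $\Pr(X=\0)>0$, or more precisely iff $\0\in\supp\hat X$, which can happen even without an atom if there is mass accumulating at $\0$... but actually $\hat X=\0$ only when $X=\0$ exactly, so $\0\in\supp\hat X$ iff $\Pr(X=\0)>0$). Since we remove $\0$ from both $\supp\hat X$ and from $\cl\cX'$ in the statement, this potential discrepancy is precisely what the set difference is designed to erase, so both inclusions above go through once one is careful to always work with neighbourhoods bounded away from $\0$ when dealing with points of $\Sp{d-1}$. I would write this out explicitly in one short paragraph to reassure the reader that the $\0$-issue causes no problem. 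The remaining arguments are just the standard fact that preimages of open sets under the continuous radial projection are open, together with the defining property of supports, so no serious obstacle arises beyond bookkeeping.
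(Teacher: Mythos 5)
Your proof is correct and follows essentially the same route as the paper: both inclusions are obtained from the definition of support together with continuity of the radial projection, noting that $\cX\cup\{\0\}$ is closed so it suffices to prove $\cX'\subseteq\cX\cup\{\0\}$, and that positive mass in an open cone forces the cone to meet $\supp X$. The only difference is cosmetic — where you invoke openness of the preimage cone, the paper carries out the explicit estimate $\|\hat\bx-\bu\|\leq 4s/r$ for $\bx\in B(\bu r;s)$ — so no further comparison is needed.
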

\begin{proof}
Recall that $\supp X$ is the smallest closed $A \subseteq \R^d$ such that $\Pr ( X \in A ) =1$,
or, equivalently, $\supp X = \{ \bx \in \R^d : \Pr ( X \in B ( \bx ; r) ) >0 \text{ for all } r >0\}$.
Since $\supp \hat X$ is a closed subset of $\Sp{d-1} \cup \{ \0\}$, it follows that $\cX$ is a closed subset of $\Sp{d-1}$.

Suppose that $\bu \in \cX'$ with $\bu \neq \0$. 
Then $\bu r \in \supp X$ for some $r >0$. This means that 
$\Pr ( X \in B ( \bu r ; s ) ) >0$ for all $s \in (0,r/2)$, say;
but, for any $\bx \in B (\bu r ; s)$,
\begin{align*}
 \| \hat \bx - \bu \|
& = \| \bx \|^{-1} \left( \| \bx - \| \bx \|   \bu \| \right) \\
& \leq \| \bx \|^{-1} \left( \| \bx - r \bu \| + | r - \| \bx \| | \right)
\leq 4 s/r ,\end{align*}
so $\Pr ( \hat X \in B ( \bu ; 4 s / r ) ) \geq \Pr ( X \in B ( \bu r ; s )) > 0$ for all $s \in (0,r/2)$. Hence $\bu \in \supp \hat X$.
Thus $\cX'  \subseteq \cX \cup \{ \0 \}$, and since $\cX \cup \{ \0 \}$ is closed we get $\cl \cX' \subseteq \cX \cup \{ \0 \}$.

On the other hand suppose that $\bu \in \cX$. Let $r_n >0$ be such that $r_n \to 0$. Then $\Pr ( X \in C ( \bu ; r_n ) ) = \Pr ( \hat X \in B (\bu; r_n ) )>0$ for all $n$,
which means that $C ( \bu ; r_n ) \cap \supp X \neq \emptyset$, i.e., for every $n$ there exists $\bx_n \in \supp X$ with
$\| \hat \bx_n - \bu \| \leq r_n$. Hence $\hat \bx_n \in \cX'$ with $\hat \bx_n \to \bu$, so $\bu \in \cl \cX'$, and we get $\cX \subseteq \cl \cX'$. 
\end{proof}

The next result characterizes a set as s-convex if and only if all normalized
conical combinations are contained within the set.

\begin{lemma}
\label{lem:s-convex-cone}
The set $A \subseteq \Sp{d-1}$ is s-convex if and only if for all $n \in \N$,
all $\bu_1, \ldots, \bu_n \in A$, and all $\beta_1, \ldots, \beta_n \in (0,\infty)$,
\begin{equation}
\label{eq:cone-closure}
\frac{ \sum_{i=1}^n \beta_i \bu_i }{ \left\|  \sum_{i=1}^n \beta_i \bu_i \right\| } \in A, \text{ whenever } \sum_{i=1}^n \beta_i \bu_i \neq \0.\end{equation}
\end{lemma}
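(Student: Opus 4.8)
The plan is to prove the equivalence in Lemma~\ref{lem:s-convex-cone} by showing that the displayed condition~\eqref{eq:cone-closure} is exactly the statement that $A$ equals $(\shull A) \cap \Sp{d-1}$, or more directly by an induction on $n$.

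\textbf{Easy direction.} First I would observe that the ``if'' direction is immediate: taking $n = 2$, $\bu_1 = \bu$, $\bu_2 = \bv$, and $\beta_1 = \alpha$, $\beta_2 = 1-\alpha$ for $\alpha \in (0,1)$ recovers $I_\alpha(\bu,\bv) \in A$ for all $\alpha \in (0,1)$; the endpoints $\alpha \in \{0,1\}$ give $\bu, \bv \in A$ trivially, and the excluded case $\bu = -\bv$, $\alpha = 1/2$ is by definition not required. Hence $I(\bu,\bv) \subseteq A$ for all $\bu,\bv \in A$, so $A$ is s-convex.

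\textbf{Hard direction.} For ``only if'', assume $A$ is s-convex; I would prove~\eqref{eq:cone-closure} by induction on $n$. The base case $n=1$ is trivial since $\beta_1 \bu_1 / \| \beta_1 \bu_1 \| = \bu_1 \in A$. For the inductive step, suppose the claim holds for $n-1$ and we are given $\bu_1,\ldots,\bu_n \in A$ and $\beta_1,\ldots,\beta_n \in (0,\infty)$ with $\bv := \sum_{i=1}^n \beta_i \bu_i \neq \0$. Write $\bw := \sum_{i=1}^{n-1} \beta_i \bu_i$. The main case is $\bw \neq \0$: then by the inductive hypothesis $\hat\bw \in A$, and $\bv = \| \bw \| \hat\bw + \beta_n \bu_n$ is a strictly positive combination of the two unit vectors $\hat\bw, \bu_n \in A$; if $\hat\bw \neq -\bu_n$ then $\hat\bv = I_\alpha(\hat\bw, \bu_n)$ for $\alpha = \| \bw \| / ( \| \bw \| + \beta_n) \in (0,1)$, hence $\hat\bv \in A$ by s-convexity. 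It remains to handle the two degenerate situations. If $\bw = \0$, then $\bv = \beta_n \bu_n$, so $\hat\bv = \bu_n \in A$. If $\hat\bw = -\bu_n$ (so $\bw = -\|\bw\| \bu_n$), then $\bv = (\beta_n - \|\bw\|) \bu_n$; since $\bv \neq \0$ we have $\beta_n \neq \|\bw\|$ and $\hat\bv = \pm\bu_n \in A$ (using that $A$ s-convex contains $-\bu$ whenever, hmm — actually we only need $\bu_n \in A$ for the $+$ case; for the $-$ case note $-\bu_n = \hat\bw$, which lies in $A$ by the inductive hypothesis).

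\textbf{Main obstacle.} The subtle point, and the one I would be most careful about, is the bookkeeping around the antipodal convention $I_{1/2}(\bu,-\bu) = \0$: in the inductive step one must verify that whenever a partial sum $\sum_{i \le m} \beta_i \bu_i$ vanishes we route around it correctly, and that the final combination producing a \emph{nonzero} $\bv$ is never forced through the excluded midpoint. A clean way to avoid fragility is to first reorder and group the $\bu_i$ so that one can always peel off a term keeping the running partial sum nonzero whenever the total is nonzero — for instance, if some partial sum along the given order is $\0$, one shows directly that the two halves are antipodal and reduces to a smaller combination. Alternatively, I would phrase the argument as: the set $K := \{ \bx \in \R^d \setminus \{\0\} : \hat\bx \in A \} \cup \{\0\}$ is a cone, and $A$ s-convex means precisely that $K$ is closed under adding any two of its nonzero elements that are not antipodal; then $K \setminus \{\0\}$ together with $\0$ being closed under all positive combinations follows by the same induction, with the antipodal cases absorbed because if $\bx + \by = \0$ the sum is still in $K$ by fiat. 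Either route works; the grouping/reordering lemma is the piece that needs the most care to state precisely, so I would isolate it rather than inline it.
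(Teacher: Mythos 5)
Your proof is correct and takes essentially the same route as the paper: the easy direction is identical, and your induction on $n$ (normalize the partial sum $\bw$ of the first $n-1$ terms and combine $\hat\bw$ with $\bu_n$ via s-convexity, treating the degenerate cases separately) is only a minor rearrangement of the paper's inductive step, which instead merges the last two terms $\beta_m\bu_m+\beta_{m+1}\bu_{m+1}$ into a single unit vector before invoking the hypothesis. The reordering/grouping machinery you flag as a possible obstacle is not actually needed: your three cases ($\bw=\0$; $\bw\neq\0$ with $\hat\bw\neq-\bu_n$; $\hat\bw=-\bu_n$) are exhaustive and each is handled correctly, so the excluded midpoint never causes trouble.
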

\begin{proof}
The `if' half follows immediately (take $n=2$ and $\beta_1 + \beta_2 = 1$).
Suppose that $A$ is s-convex. We proceed by an induction on~$n$.
Then~\eqref{eq:cone-closure} holds for $n=2$,
since
\[ \frac{\beta_1 \bu_1 + \beta_2 \bu_2}{\| \beta_1 \bu_1 + \beta_2 \bu_2 \|}
= \frac{\frac{\beta_1}{\beta_1+\beta_2} \bu_1 +   \frac{\beta_2}{\beta_1+\beta_2}  \bu_2}
{ \left\| \frac{\beta_1}{\beta_1+\beta_2} \bu_1 +   \frac{\beta_2}{\beta_1+\beta_2}  \bu_2 \right\| } .\]
 Suppose that~\eqref{eq:cone-closure} holds for all $n \in \{1,\ldots,m\}$
with $m \geq 2$,
and consider $\bu_1, \ldots, \bu_{m+1} \in A$ and $\beta_1,\ldots,\beta_{m+1} \in (0,\infty)$
with $\sum_{i=1}^{m+1} \beta_i \bu_i \neq \0$. We may also suppose that $\beta_m \bu_m + \beta_{m+1} \bu_{m+1} \neq \0$,
or else the inductive hypothesis would apply directly. 
Set $\bu_i' = \bu_i$ for $1 \leq i \leq m -1$ and 
\[ \bu'_{m} = \frac{\frac{\beta_{m}}{\beta_{m} +\beta_{m +1}} \bu_{m} + \frac{\beta_{m+1}}{\beta_{m} +\beta_{m +1}} \bu_{m+1}}
{\left\| \frac{\beta_{m}}{\beta_{m} +\beta_{m +1}} \bu_{m} + \frac{\beta_{m+1}}{\beta_{m} +\beta_{m +1}} \bu_{m+1}\right\|} .\]
Then since $A$ is s-convex, $\bu'_m \in A$, and
\[ \frac{ \sum_{i=1}^{m+1} \beta_i \bu_i }{ \left\|  \sum_{i=1}^{m+1} \beta_i \bu_i \right\| } = 
\frac{ \sum_{i=1}^m \beta'_i \bu'_i }{\left\|  \sum_{i=1}^m \beta'_i \bu'_i  \right\|} ,\]
where $\beta'_i = \beta_i$ for $1 \leq i \leq m-1$ and $\beta'_m = \| \beta_m \bu_m + \beta_{m+1} \bu_{m+1} \|$.
By inductive hypothesis, the expression in the last display is thus in~$A$. This completes the
inductive step.
\end{proof}

\begin{corollary}
\label{cor:s-convex-hull}
Suppose that $A \subseteq \Sp{d-1}$ is s-convex. Then $A = \Sp{d-1} \cap \hull A$.
\end{corollary}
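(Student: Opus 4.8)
The plan is to prove the two inclusions $A \subseteq \Sp{d-1} \cap \hull A$ and $\Sp{d-1} \cap \hull A \subseteq A$ separately. The first is immediate: every point of $A$ lies on $\Sp{d-1}$ (since $A \subseteq \Sp{d-1}$), and every point of $A$ lies in $\hull A$ (a set is always contained in its convex hull), so $A \subseteq \Sp{d-1} \cap \hull A$ with no use of s-convexity.

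For the reverse inclusion, take $\bx \in \Sp{d-1} \cap \hull A$. Since $\bx \in \hull A$, Carath\'eodory's theorem (or simply the definition of the convex hull as the set of all finite convex combinations) lets me write $\bx = \sum_{i=1}^n \beta_i \bu_i$ for some $n \in \N$, points $\bu_1, \ldots, \bu_n \in A$, and weights $\beta_i \geq 0$ with $\sum_{i=1}^n \beta_i = 1$. Discarding any terms with $\beta_i = 0$, I may assume all $\beta_i \in (0,\infty)$. Now $\| \bx \| = 1 \neq 0$, so $\sum_{i=1}^n \beta_i \bu_i \neq \0$, and I can apply Lemma~\ref{lem:s-convex-cone}: since $A$ is s-convex, \eqref{eq:cone-closure} gives
\[ \bx = \frac{\bx}{\| \bx \|} = \frac{\sum_{i=1}^n \beta_i \bu_i}{\left\| \sum_{i=1}^n \beta_i \bu_i \right\|} \in A . \]
This proves $\Sp{d-1} \cap \hull A \subseteq A$, and combining the two inclusions completes the proof.

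There is no real obstacle here; the content has been front-loaded into Lemma~\ref{lem:s-convex-cone}, which already handles the passage from pairwise s-convexity to arbitrary normalized conical (hence convex) combinations. The only point requiring a moment's care is the trivial reduction that lets one assume all weights are strictly positive, and the observation that $\| \bx \| = 1$ is exactly what is needed to apply the lemma (rather than landing in the excluded case $\sum_i \beta_i \bu_i = \0$).
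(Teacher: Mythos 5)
Your proof is correct and follows essentially the same route as the paper: write a point of $\Sp{d-1} \cap \hull A$ as a finite convex combination of points of $A$ and apply Lemma~\ref{lem:s-convex-cone}, using $\|\bx\|=1$ to rule out the degenerate case. Your explicit reduction to strictly positive weights is a small point the paper leaves implicit, but otherwise the arguments coincide.
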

\begin{proof}
It is clear that $A \subseteq \Sp{d-1} \cap \hull A$.
So suppose that $\bu \in \Sp{d-1} \cap \hull A$.
Then (see e.g.~Lemma~3.1 of~\cite[p.~42]{gruber}) there exist $n \in \N$,
$\bv_1, \ldots, \bv_n \in A$, and $\lambda_1, \ldots, \lambda_n \in [0,1]$
with $\sum_{i=1}^n \lambda_i = 1$, for which $\bu = \sum_{i=1}^n \lambda_i \bv_i$.
But, since $A$ is s-convex and $\| \bu \| = 1$, Lemma~\ref{lem:s-convex-cone} shows that
$ \sum_{i=1}^n \lambda_i \bv_i \in A$. So $\Sp{d-1} \cap \hull A \subseteq A$.
\end{proof}

The next result shows that $\shull A$ has a similar characterization to the usual $\hull A$.

\begin{lemma}
\label{lem:smallest-s-convex}
 For $A \subseteq \Sp{d-1}$, 
 $\shull A$ is the smallest s-convex $B \subseteq \Sp{d-1}$ with $A \subseteq B$.
\end{lemma}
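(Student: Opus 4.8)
The plan is to prove the two assertions of Lemma~\ref{lem:smallest-s-convex} separately: first that $\shull A$ is itself s-convex, and then that it is contained in any s-convex set $B$ with $A \subseteq B$.

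For the first part, I would take $\bu, \bv \in \shull A$ with $\bv \neq -\bu$ and show that $I(\bu,\bv) \subseteq \shull A$. By definition there are $\bx, \by \in \hull A \setminus \{\0\}$ with $\hat \bx = \bu$ and $\hat \by = \bv$. For $\alpha \in [0,1]$, the point $I_\alpha(\bu,\bv)$ is the radial projection of $\alpha \bu + (1-\alpha)\bv$, which is a positive scalar multiple of $\alpha \| \by \|^{-1}\|\bx\| \cdot \bx + (1-\alpha) \by$ — wait, more carefully: $\alpha \bu + (1-\alpha)\bv = \frac{\alpha}{\|\bx\|}\bx + \frac{1-\alpha}{\|\by\|}\by$, a non-negative combination of $\bx$ and $\by$ (not necessarily convex, but a conical combination with non-negative coefficients). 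Normalising the coefficients, this is a positive multiple of a genuine convex combination $\lambda \bx + (1-\lambda)\by \in \hull A$, where $\lambda = \frac{\alpha/\|\bx\|}{\alpha/\|\bx\| + (1-\alpha)/\|\by\|}$ (the denominator is positive as long as $\alpha, 1-\alpha$ are not both zero in a degenerate way, and one checks the boundary cases $\alpha \in \{0,1\}$ directly). Hence $I_\alpha(\bu,\bv)$ is the radial projection of a nonzero point of $\hull A$, so lies in $\shull A$. The one subtlety is ensuring $\lambda \bx + (1-\lambda)\by \neq \0$: if it were zero then $\alpha\bu + (1-\alpha)\bv = \0$, forcing $\bu = -\bv$ (using $\|\bu\|=\|\bv\|=1$) and $\alpha = 1/2$, the excluded case. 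So the first part goes through cleanly.

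For the minimality, I would let $B \subseteq \Sp{d-1}$ be s-convex with $A \subseteq B$, and show $\shull A \subseteq B$. Take $\bu \in \shull A$, so $\bu = \hat \bx$ for some $\bx \in \hull A \setminus \{\0\}$. By Carathéodory / the standard description of the convex hull (as in the citation to Gruber already used in Corollary~\ref{cor:s-convex-hull}), $\bx = \sum_{i=1}^n \lambda_i \bv_i$ with $\bv_i \in A \subseteq B$, $\lambda_i \geq 0$, $\sum_i \lambda_i = 1$; discarding zero coefficients we may assume all $\lambda_i > 0$. Then $\bu = \hat\bx$ is the normalisation of the conical combination $\sum_i \lambda_i \bv_i$ of elements of $B$, which is nonzero, so by Lemma~\ref{lem:s-convex-cone} (the `only if' direction, applicable since $B$ is s-convex) we get $\bu \in B$. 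Hence $\shull A \subseteq B$. Combining with the first part, $\shull A$ is the smallest s-convex superset of $A$ (noting $A \subseteq \shull A$ trivially since each $\bu \in A$ equals $\hat\bu$).

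I do not anticipate a genuine obstacle here; the argument is a routine unwinding of definitions once Lemma~\ref{lem:s-convex-cone} is in hand. The only place requiring a moment's care is the bookkeeping in the first part — verifying that the non-negative conical combination of $\bx$ and $\by$ reduces to a convex combination living in $\hull A$ and handling the degenerate endpoint cases $\alpha \in \{0,1\}$ and the excluded antipodal case — but this is bounded-length casework rather than anything deep.
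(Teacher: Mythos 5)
Your proposal is correct and follows essentially the same route as the paper: the s-convexity of $\shull A$ is verified by the same computation (your $\lambda$ is the paper's $\beta$), and the minimality step uses the same ingredients, namely the convex-hull representation from Gruber together with Lemma~\ref{lem:s-convex-cone}. The only cosmetic difference is that you apply Lemma~\ref{lem:s-convex-cone} directly to $B$, whereas the paper first shows $\shull B = B$ for s-convex $B$ and then uses monotonicity of $\shull$; your explicit discarding of zero coefficients before invoking Lemma~\ref{lem:s-convex-cone} is a welcome bit of extra care.
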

\begin{proof}
Let $\bu, \bv \in \shull A$ with $\bv \neq -\bu$, and $\alpha \in (0,1)$.
Then $\bu = \hat \bx$ and $\bv = \hat \by$ for some $\bx, \by \in \hull A$ with $\bx , \by \neq \0$.
Choose $\beta \in (0,1)$ given by
\[ \beta = \frac{\alpha \| \by \|}{\alpha \| \by \| + (1-\alpha) \| \bx \|} .\]
Consider $\bw = \beta \bx + (1-\beta) \by$. Then, since $\hull A$ is convex,
$\bw \in \hull A$, and $\bw \neq \0$ since $\hat \bx \neq - \hat \by$, so  $\hat \bw \in \shull A$. But
\[ \frac{\bw}{\| \bw \|} = \frac{\alpha \hat \bx + (1-\alpha) \hat \by}{ \left\| \alpha \hat \bx + (1-\alpha) \hat \by \right\|}  \]
is thus in $\shull A$ for all $\alpha \in (0,1)$, verifying that $\shull A$ is s-convex.

Next we claim that if $A \subseteq \Sp{d-1}$ is s-convex, then $\shull A = A$. Clearly $A \subseteq \shull A$.
So suppose that $A$ is s-convex, and consider $\bu \in \shull A$.
Then $\bu = \hat \bx$ for some $\bx \in \hull A$, $\bx \neq \0$,
and thus (see e.g.~Lemma~3.1 of~\cite[p.~42]{gruber}) there exist $n \in \N$,
$\bv_1, \ldots, \bv_n \in A$, and $\lambda_1, \ldots, \lambda_n \in [0,1]$
with $\sum_{i=1}^n \lambda_i = 1$, for which $\bx = \sum_{i=1}^n \lambda_i \bv_i$.
Then Lemma~\ref{lem:s-convex-cone}
shows that $\hat \bx \in A$. In other words, $\shull A \subseteq A$, as required.

Suppose $B$ is s-convex with $A \subseteq B$; then the preceding paragraph shows that
$\shull A \subseteq \shull B = B$, which completes the proof of the lemma.
\end{proof}

\begin{lemma}
\label{lem:closure}
Let $A \subseteq \Sp{d-1}$ be s-convex. Then $\cl A$ is also s-convex.
\end{lemma}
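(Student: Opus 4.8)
The plan is to argue directly from Definition~\ref{def:s-convex}, combining s-convexity of $A$ with a routine approximation-and-continuity argument. Let $\bu, \bv \in \cl A$. If $\bu = -\bv$ then $I(\bu, -\bu) = \{\bu, -\bu\} \subseteq \cl A$ by the convention in the definition of $I$, so it suffices to treat the case $\bu \neq -\bv$. I would first record the elementary observation that $\bu \neq -\bv$ (with $\|\bu\| = \|\bv\| = 1$) forces $\alpha\bu + (1-\alpha)\bv \neq \0$ for every $\alpha \in [0,1]$: from $\alpha\bu = -(1-\alpha)\bv$, taking norms gives $\alpha = 1-\alpha$, hence $\alpha = 1/2$ and $\bu = -\bv$, a contradiction. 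Consequently, for each fixed $\alpha \in [0,1]$, the map $g_\alpha(\bx, \by) := (\alpha\bx + (1-\alpha)\by)/\|\alpha\bx + (1-\alpha)\by\|$ is well-defined and continuous on a neighbourhood of $(\bu, \bv)$ in $\R^d \times \R^d$, with $g_\alpha(\bu, \bv) = I_\alpha(\bu, \bv)$.

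Next I would fix $\alpha \in [0,1]$ and choose sequences $\bu_n \to \bu$, $\bv_n \to \bv$ with $\bu_n, \bv_n \in A$. Since $\bu \neq -\bv$, we cannot have $\bu_n = -\bv_n$ for infinitely many $n$ (otherwise, passing to a subsequence along which this holds would force $\bv = \lim \bv_n = -\lim \bu_n = -\bu$), so $\bu_n \neq -\bv_n$ for all large $n$. For such $n$, applying s-convexity of $A$ to the pair $\bu_n, \bv_n \in A$ gives $I(\bu_n, \bv_n) \subseteq A$, and in particular $I_\alpha(\bu_n, \bv_n) = g_\alpha(\bu_n, \bv_n) \in A$. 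Letting $n \to \infty$ and using continuity of $g_\alpha$ at $(\bu, \bv)$ gives $g_\alpha(\bu_n, \bv_n) \to g_\alpha(\bu, \bv) = I_\alpha(\bu, \bv)$; since the approximating terms lie in $A$, the limit lies in $\cl A$. As $\alpha \in [0,1]$ was arbitrary, $I(\bu, \bv) \subseteq \cl A$, which shows $\cl A$ is s-convex.

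I do not expect a genuine obstacle here: the argument is a straightforward passage to the limit, and the only points needing care are the degenerate antipodal case (absorbed by the convention for $I$) and the verification that $\bu_n \neq -\bv_n$ eventually, which is exactly what makes $I_\alpha(\bu_n, \bv_n)$ coincide with the normalized vector $g_\alpha(\bu_n, \bv_n)$ so that s-convexity of $A$ applies cleanly. One could alternatively try to deduce the result from Corollary~\ref{cor:s-convex-hull} and Lemma~\ref{lem:smallest-s-convex} by relating $\cl A$ to the closed convex hull of $A$; but the intersection of $\Sp{d-1}$ with an arbitrary closed convex set need not be s-convex — for instance a short chord meets a circle in just two points — so this route would still require essentially the same radial-projection step, and the direct argument above is cleaner.
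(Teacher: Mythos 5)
Your proposal is correct and follows essentially the same route as the paper: approximate $\bu,\bv\in\cl A$ by sequences in $A$, note they are eventually non-antipodal, apply s-convexity of $A$ to get $I_\alpha(\bu_n,\bv_n)\in A$, and pass to the limit using continuity of $\bx\mapsto\hat\bx$ away from $\0$. Your extra remarks (the antipodal convention and why the denominator stays nonzero) only make explicit points the paper leaves implicit.
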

\begin{proof}
It suffices to suppose $\bu, \bv \in \cl A$ with $\bu \neq - \bv$. Then there exist $\bu_1, \bu_2, \ldots \in A$
and $\bv_1, \bv_2, \ldots \in A$ with $\bu_n \to \bu$ and $\bv_n \to \bv$, and there exists $n_0 \in \N$
such that $\bu_n \neq -\bv_n$ for all $n \geq n_0$.
Since $A$ is s-convex, $I_\alpha (\bu_n, \bv_n) \in A$  for all $n \geq n_0$ and all $\alpha \in [0,1]$.
By continuity of the function $\bx \mapsto \hat \bx$ on $\R^d \setminus \{ \0 \}$,
it follows that 
$I_\alpha ( \bu, \bv) = \lim_{n \to \infty} I_\alpha (\bu_n, \bv_n)  \in \cl A$ for all $\alpha \in [0,1]$.
Hence $\cl A$ is s-convex.
\end{proof}

\begin{proof}[Proof of Proposition~\ref{prop:s-hull}.]
First we use induction to show that $\cX_n \subseteq \cl \shull \cX$ for all $n \in \N$.
Clearly this is true for $n=1$. So suppose, for the inductive hypothesis,
that $\cX_m \subseteq \cl \shull \cX$ for all $m \in \{1,\ldots,n\}$.
Now, provided that $S_{n+1} \neq \0$, we have
\begin{align*}
 \hat S_{n+1} &  
 = \frac{ \alpha_n \hat S_n + (1-\alpha_n) \hat X_{n+1} }{ \| \alpha_n \hat S_n + (1-\alpha_n) \hat X_{n+1} \| } ,
\text{ where } \alpha_n = \frac{ \| S_n\|}{\| S_n \| + \| X_{n+1} \|}.
\end{align*}
In particular, since $\Pr ( \hat S_n \in \cX_n \cup \{ \0 \} ) =1$ and $\Pr ( \hat X_{n+1} \in \cX \cup \{ \0 \} ) = 1$, we have 
\[ \Pr \left( \hat S_{n+1}   \in \left( \cup \{ I  ( \bu , \bv) :   \bu, \bv \in \cl \shull \cX \} \right) \cup \{ \0 \} \right) =1 , \]
by the inductive hypothesis. But $\cl \shull \cX$ is s-convex, by Lemmas~\ref{lem:smallest-s-convex} and~\ref{lem:closure}, so $\Pr ( \hat S_{n+1} \in ( \cl \shull \cX ) \cup \{ \0 \} ) =1$,
which means that $\cX_{n+1} \subseteq \cl \shull \cX$, completing the induction.
Thus we conclude that $\cX^\star \subseteq \cl \shull \cX$.

Next we show that $\cX^\star$ is s-convex. It suffices to suppose that $\bu, \bv \in \cX^\star$ with $\bu \neq - \bv$. Then there exist 
sequences $\bu_{n_k} \in \cX_{n_k}$ and $\bv_{m_k} \in \cX_{m_k}$
with $\bu_{n_k} \to \bu$ and $\bv_{m_k} \to \bv$.
Lemma~\ref{lem:sphere-support} shows that, correspondingly, there exist sequences $\bx_{n_{k,1}}, \bx_{n_{k,2}}, \ldots \in \supp S_{n_k}$ and $\by_{m_{k,1}} , \by_{m_{k,2}} ,\ldots \in \supp S_{m_k}$
 with $\lim_{i \to \infty} \hat \bx_{n_{k,i}} = \bu_{n_k}$ and
$\lim_{j \to \infty} \hat \by_{m_{k,j}} = \bv_{m_k}$, and, for all $k$ sufficiently large and all $i,j$ sufficiently large, $\hat \bx_{n_{k,i}} \neq - \hat \by_{m_{k,j}}$.
Now  for $s, t \in \ZP$,
$s \bx_{n_{k,i}} + t \by_{m_{k,j}} \in \supp S_{s n_k+ t m_k}$.
Applying Lemma~\ref{lem:sphere-support} with $X = S_{s n_k+t m_k}$ we see that $\bw \in \cX_{s n_k+ tm_k} \subseteq \cX^\star$,
where
\[ \bw = \frac{s \bx_{n_{k,i}} + t \by_{m_{k,j}}}{\| s \bx_{n_{k,i}} + t \by_{m_{k,j}} \|}
=  I_{\alpha_{s,t,i,j}} ( \hat \bx_{n_{k,i}} , \hat \by_{m_{k,j}} ) ,
 \]
with
\[ \alpha_{s,t,i,j} = \frac{s \| \bx_{n_{k,i}} \|} {s \| \bx_{n_{k,i}} \| + t \| \by_{m_{k,j}} \|} .\]
For fixed $k, i,j$ and $\alpha \in [0,1]$, we may choose $s, t \to \infty$ such that $\alpha_{s,t,i,j} \to \alpha$,
and since for $\bu \neq -\bv$, $\alpha \mapsto I_\alpha (\bu , \bv)$ is continuous over $\alpha \in [0,1]$, and $\cX^\star$ is closed, we get
\[ I_{\alpha} ( \hat \bx_{n_{k,i}} , \hat \by_{m_{k,j}} ) = \lim_{s,t \to \infty} I_{\alpha_{s,t,i,j}} ( \hat \bx_{n_{k,i}} , \hat \by_{m_{k,j}} ) \in \cX^\star, \text{ for all }
\alpha \in [0,1]  .\]
Then by continuity of $(\bu, \bv ) \mapsto I_\alpha (\bu, \bv)$ away from $\bu = -\bv$ we get
\[ I_\alpha (\bu , \bv) = \lim_{k \to \infty} I_\alpha ( \bu_{n_k} , \bv_{m_k} ) =
\lim_{k \to \infty}  \lim_{i, j \to \infty} I_{\alpha} ( \hat \bx_{n_{k,i}} , \hat \by_{m_{k,j}} ) \in \cX^\star ,\]
for all $\alpha \in [0,1]$.
Hence $\cX^\star$ is s-convex, and $\cX \subseteq \cX^\star$, so, by Lemma~\ref{lem:smallest-s-convex},
we have $\shull \cX \subseteq \cX^\star$, and since $\cX^\star$ is closed,
we get $\cl \shull \cX \subseteq \cX^\star$.

Thus we conclude that $\cX^\star = \cl \shull \cX$, and the latter is s-convex by Lemmas~\ref{lem:smallest-s-convex}
and~\ref{lem:closure}.
\end{proof}

We finish this section with a result on the boundary of an s-convex set, which will be useful in Section~\ref{sec:projections} below.
For $A \subseteq \Sp{d-1}$, denote by $\sint A$ the interior of $A$ relative to $\Sp{d-1}$, i.e., $\bu \in \sint A$ if and only if
$B_s ( \bu ; \delta ) \subseteq A$ for some $\delta >0$. Also, for $A \subseteq \Sp{d-1}$, we write $\partial_s A$ for the boundary of $A$ relative to $\Sp{d-1}$,
i.e., $\partial_s A := (\cl A) \setminus (\sint A)$.

\begin{lemma}
\label{lem:boundary}
If $A \subseteq \Sp{d-1}$ is s-convex, then (i) $\sint A = \sint \cl A$; and (ii) $\partial_s A = \partial_s \cl A$.
\end{lemma}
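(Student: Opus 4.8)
The plan is to prove part~(i), since part~(ii) follows immediately: $\partial_s A = (\cl A) \setminus (\sint A)$ and $\partial_s \cl A = (\cl \cl A) \setminus (\sint \cl A) = (\cl A) \setminus (\sint \cl A)$, so the two coincide once we know $\sint A = \sint \cl A$. For part~(i), the inclusion $\sint A \subseteq \sint \cl A$ is trivial from $A \subseteq \cl A$, so the real content is $\sint \cl A \subseteq \sint A$. By Lemma~\ref{lem:closure}, $\cl A$ is s-convex, so we may as well work with a closed s-convex set $C = \cl A$ and show that $\sint C \subseteq \sint A$ whenever $A$ is s-convex with $\cl A = C$; equivalently, every point of $\sint \cl A$ already lies in $A$.

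First I would dispose of the possibility that $\sint \cl A = \emptyset$, in which case there is nothing to prove. So suppose $\bu \in \sint \cl A$, and fix $\delta > 0$ with $B_s(\bu;\delta) \subseteq \cl A$. The idea is to write $\bu$ as a normalized positive combination of points lying in $A$ itself, not merely in $\cl A$, and then invoke Lemma~\ref{lem:s-convex-cone} (the characterization of s-convexity via normalized conical combinations) to conclude $\bu \in A$. Concretely: choose $d$ points $\bw_1, \ldots, \bw_d \in B_s(\bu; \delta/2)$ that, together, ``surround'' $\bu$ in the sense that $\bu$ lies in the relative interior of the spherical simplex they span — for instance take $\bw_i$ to be small perturbations of $\bu$ along an orthonormal frame of the tangent space at $\bu$, plus one more point on the opposite side, so that $\bu = \widehat{\sum_i \beta_i \bw_i}$ for strictly positive coefficients $\beta_i$. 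Since each $\bw_i \in B_s(\bu;\delta/2) \subseteq \cl A$ and $A$ is dense in $\cl A$, I can replace each $\bw_i$ by a nearby point $\bw_i' \in A$; provided the perturbation is small enough, $\bu$ still lies in the (relative) interior of $\shull\{\bw_1', \ldots, \bw_d'\}$, and in particular $\bu = \widehat{\sum_i \beta_i' \bw_i'}$ for some strictly positive $\beta_i'$ (this uses that the map $(\bw_1,\ldots,\bw_d) \mapsto$ ``barycentric-type coefficients of $\bu$'' is continuous and the target coefficients are strictly positive, hence stay positive under small perturbation; one also keeps all the $\bw_i'$ away from being antipodal so the normalized combinations are well defined). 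Then Lemma~\ref{lem:s-convex-cone} applied to $\bw_1', \ldots, \bw_d' \in A$ and $\beta_1', \ldots, \beta_d' > 0$ gives $\bu \in A$. Since $\bu$ was an arbitrary point of $\sint \cl A$, we get $\sint \cl A \subseteq A$, and because $\sint \cl A$ is open in $\Sp{d-1}$ this yields $\sint \cl A \subseteq \sint A$, completing~(i).

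The main obstacle is making the ``surrounding points'' construction precise and checking its stability under perturbation — i.e., verifying that a point in the spherical interior of a small spherical simplex can be written as a normalized positive-coefficient combination of the vertices, and that this representation survives a small move of the vertices into the dense subset $A$. This is essentially a local-coordinates argument: near $\bu$, the radial projection $\bx \mapsto \hat\bx$ is a diffeomorphism from a neighbourhood of $\bu$ in an affine hyperplane onto a neighbourhood of $\bu$ in $\Sp{d-1}$, and under this chart s-convex combinations correspond (up to reparametrization of the coefficients, exactly as in the proof of Lemma~\ref{lem:smallest-s-convex}) to ordinary convex combinations; so the statement reduces to the elementary fact that an interior point of a full-dimensional Euclidean simplex has all barycentric coordinates strictly positive, and these depend continuously on the vertices. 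Once this lemma-within-a-lemma is in hand, everything else is bookkeeping with the definitions of $\sint$ and $\partial_s$.
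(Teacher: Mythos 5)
Your argument is correct, and at its core it is the same as the paper's: both proofs reduce (ii) to (i), note that only $\sint \cl A \subseteq \sint A$ needs work, capture $\bu \in \sint \cl A$ inside the s-hull of finitely many nearby points of $\cl A$, move those points into $A$ by density, and invoke s-convexity (your Lemma~\ref{lem:s-convex-cone}; the paper uses the s-hull characterisation) together with a stability-under-perturbation argument. The implementations of the stability step differ: the paper works with $P_s(\bv_1,\ldots,\bv_m) = \shull\{\bv_1,\ldots,\bv_m\}$ and the continuity of this map in the Hausdorff metric, so that a whole ball $B_s(\bu;\delta)$ stays inside $P_s$ after the perturbation and $\bu \in \sint A$ follows directly, whereas you track the strictly positive barycentric-type coefficients of $\bu$ in a radial chart, obtaining only $\bu \in A$ pointwise and then upgrading to $\sint \cl A \subseteq \sint A$ via openness of $\sint \cl A$ — both routes are sound. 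One small bonus of your version: the explicit small-simplex construction in the tangent chart justifies the existence of finitely many points of $\cl A$ whose s-hull contains $\bu$ in its interior, a step the paper's proof asserts without detail.
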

\begin{proof}
Suppose that $\bu \in \sint \cl A$. Then there exist $m \in \N$
and $\bu_1, \ldots, \bu_m \in \cl A$ such that
$\bu \in \sint P_s ( \bu_1, \ldots, \bu_m )$,
where
$P_s ( \bu_1, \ldots, \bu_m ) := \shull \{ \bu_1, \ldots, \bu_m \}$.
Let 
\[ R_s (\bv_1, \ldots, \bv_m ; \bu ) = \inf \{ \| \bv - \bu \| : \bv \in \Sp{d-1} \setminus P_s (\bv_1, \ldots, \bv_m ) \} ,\]
which is zero unless $\bu$ lies in the interior of $P_s (\bv_1, \ldots, \bv_m )$,
when it is equal to the shortest distance from $\bu$ to the boundary of $P_s (\bv_1, \ldots, \bv_m )$.
In particular, note that $R_s (\bu_1, \ldots, \bu_m ; \bu) = \delta_0 > 0$.
For $\bv_1, \ldots, \bv_m \in \Sp{d-1}$,
the map $(\bv_1, \ldots, \bv_m ) \mapsto P_s (\bv_1, \ldots, \bv_m )$, as a 
function from $(\Sp{d-1})^{m}$ to compact subsets of $\R^d$ with the Hausdorff metric, is
continuous.
So
the map from $(\bv_1, \ldots, \bv_m )$ to $R_s (\bv_1, \ldots, \bv_m ; \bu)$ is also continuous. Hence for any $\delta \in (0,\delta_0)$, 
we can find $\eps >0$ sufficiently small such that $B_s ( \bu ; \delta)$ is contained in
$P_s ( \bv_1, \ldots, \bv_m )$ for all $\bv_i \in \Sp{d-1}$ with $\| \bv_i - \bu_i \| < \eps$. 
Since $\bu_i \in   \cl A$, we can find $\bv_i \in A$ with $\| \bv_i - \bu_i \| < \eps$,
which means that $B_s ( \bu ; \delta) \subseteq P_s ( \bv_1, \ldots, \bv_m ) \subseteq A$,
since $A$ is s-convex. Hence $\bu \in \sint A$. This establishes~(i). Then~(ii) follows
since $\partial_s \cl A = \cl A \setminus \sint \cl A = \cl A \setminus \sint  A = \partial_s A$.
\end{proof}

\section{Projection asymptotics}
\label{sec:projections}

In Section~\ref{sec:hull} we study the way in which the random walk fills space via the convex hull of the trajectory.
Pertinent for this is the behaviour of one-dimensional projections of the walk, so we turn to this first. 
For fixed $\bu \in \Sp{d-1}$, the projection $S_n \cdot \bu$ defines a random walk on $\R$, with increment distribution $X \cdot \bu$,
which either tends to $+\infty$, to $-\infty$, or oscillates (see Lemma~\ref{lem:projections-all} below).
However, this, by itself, does not exclude that there might exist (random) $\bu \in \Sp{d-1}$
for which $S_n \cdot \bu$ does something out of the ordinary, such as having a finite $\limsup$.
While not central for what follows, we show that such \emph{exceptional projections} do not exist, at least for $d \leq 2$.

Define the random sets
\begin{align*} 
\cP_+ & := \bigl\{ \bu \in \Sp{d-1} : \lim_{n \to \infty} (S_n \cdot \bu) = +\infty \bigr\} , ~~  \cP_-  := \bigl\{ \bu \in \Sp{d-1} : \lim_{n \to \infty} (S_n \cdot \bu) = -\infty \bigr\} , \\
\cP_\pm & := \bigl\{ \bu \in \Sp{d-1} : -\infty = \liminf_{n \to \infty} (S_n \cdot \bu)
<  \limsup_{n \to \infty} (S_n \cdot \bu) = + \infty \bigr\},
\end{align*}
and their non-random counterparts
\begin{align*} 
\cD_+ & := \bigl\{ \bu \in \Sp{d-1} : \lim_{n \to \infty} (S_n \cdot \bu) = +\infty, \as \bigr\} ,\\
\cD_- & := \bigl\{ \bu \in \Sp{d-1} : \lim_{n \to \infty} (S_n \cdot \bu) = -\infty, \as \bigr\} , \\
\cD_\pm & := \bigl\{ \bu \in \Sp{d-1} : -\infty = \liminf_{n \to \infty} (S_n \cdot \bu)
<  \limsup_{n \to \infty} (S_n \cdot \bu) = + \infty, \as \bigr\},
\end{align*}
Then $\cP_+ = - \cP_-$, $\cP_\pm = - \cP_\pm$, and similarly for the non-random versions.

\begin{lemma}
\label{lem:projections-all}
The sets $\cD_+, \cD_- , \cD_\pm$ partition $\Sp{d-1}$.
\end{lemma}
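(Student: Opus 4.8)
The plan is to verify three things: that the three sets are pairwise disjoint, that their union is all of $\Sp{d-1}$, and that each genuinely deserves its non-random description (the ``a.s.'' in the definitions is well-posed). The last point is the easiest: for fixed $\bu \in \Sp{d-1}$, the sequence $S_n \cdot \bu$ is a one-dimensional random walk with increment $X \cdot \bu$, and by the Hewitt--Savage zero--one law each of the three events $\{\lim = +\infty\}$, $\{\lim = -\infty\}$, $\{\liminf = -\infty, \limsup = +\infty\}$ has probability $0$ or $1$. So I would first record that for each fixed $\bu$, exactly one of these probabilities equals $1$ --- this is precisely the one-dimensional trichotomy cited in Section~\ref{sec:directions} (see e.g.~\cite[Theorem~4.1.2]{dur}), applied to the walk $S_n \cdot \bu$, \emph{unless} $X \cdot \bu = 0$ a.s.

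The subtlety is the degenerate case: if $\Pr(X \cdot \bu = 0) = 1$ then $S_n \cdot \bu = 0$ for all $n$, which fits none of the three cases. Here I would invoke the standing assumption that $S_n$ is genuinely $d$-dimensional, i.e.\ $\supp X$ is not contained in any $(d-1)$-dimensional subspace. If $X \cdot \bu = 0$ a.s., then $\supp X \subseteq \bu^\perp$, a $(d-1)$-dimensional subspace, contradicting genuine $d$-dimensionality. Hence for \emph{every} $\bu \in \Sp{d-1}$ we have $\Pr(X \cdot \bu = 0) < 1$, the trichotomy applies, and exactly one of the three a.s.-events holds. This simultaneously gives pairwise disjointness of $\cD_+, \cD_-, \cD_\pm$ (a fixed $\bu$ cannot satisfy two of the defining a.s.-statements, since they are mutually exclusive sample-path properties and each holds a.s.\ or not at all) and that every $\bu$ lies in exactly one of them, i.e.\ they cover $\Sp{d-1}$.

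I would then spell this out cleanly: fix $\bu \in \Sp{d-1}$; by genuine $d$-dimensionality $X \cdot \bu$ is not a.s.\ zero, so $S_n \cdot \bu$ is a non-degenerate random walk on $\R$; by \cite[Theorem~4.1.2]{dur}, a.s.\ exactly one of (i) $S_n \cdot \bu \to +\infty$, (ii) $S_n \cdot \bu \to -\infty$, (iii) $\liminf_n (S_n \cdot \bu) = -\infty$ and $\limsup_n (S_n \cdot \bu) = +\infty$ occurs, and which one occurs is non-random by Hewitt--Savage. Case (i) means $\bu \in \cD_+$, case (ii) means $\bu \in \cD_-$, case (iii) means $\bu \in \cD_\pm$; and the three events being mutually exclusive forces $\bu$ into exactly one of the sets. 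Therefore $\{\cD_+, \cD_-, \cD_\pm\}$ is a partition of $\Sp{d-1}$.

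\textbf{Main obstacle.} There is no real obstacle here; the only thing that requires care is not to overlook the degenerate sub-case $X \cdot \bu = 0$ a.s., which is exactly where the genuine-$d$-dimensionality hypothesis enters and is the one substantive ingredient beyond quoting the classical one-dimensional trichotomy and the Hewitt--Savage law.
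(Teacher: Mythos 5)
Your proposal is correct and follows essentially the same route as the paper: apply the one-dimensional trichotomy of \cite[Theorem~4.1.2]{dur} to the projected walk $S_n \cdot \bu$, and rule out the degenerate case $\Pr(X \cdot \bu = 0)=1$ via the genuine $d$-dimensionality assumption. The paper's proof is simply a compressed version of your argument, with the Hewitt--Savage point left implicit in the statement of the trichotomy.
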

\begin{proof}
Let $\bu \in \Sp{d-1}$.
Then
(see e.g.~\cite[Theorem~4.1.2]{dur})
  exactly one of the following holds:
(i) $\bu \in \cD_+$, (ii) $\bu \in \cD_-$, (iii) $\bu \in \cD_\pm$, or (iv)
$\Pr ( X \cdot \bu = 0 ) = 1$.
Case (iv) is ruled out by our assumption that the walk is genuinely $d$-dimensional. 
\end{proof}
 
It is not immediately obvious that $\cP_+, \cP_-, \cP_\pm$
also partition $\Sp{d-1}$. 
 We define 
\begin{align*}
\cE_+ & := \bigl\{ \bu \in \Sp{d-1} : \limsup_{n \to \infty} (S_n \cdot \bu ) \in \R \bigr\}, ~~
 \cE_-  := \bigl\{ \bu \in \Sp{d-1} : \liminf_{n \to \infty} (S_n \cdot \bu ) \in \R \bigr\}
.\end{align*}
We call
$\bu \in \cE := \cE_+ \cup \cE_-$ an \emph{exceptional projection}
of the walk. Since $\cE_- = - \cE_+$, we have $\cE = -\cE$. Lemma~\ref{lem:projections-all}
means that $\Pr ( \bu \in \cE ) = 0$
for all fixed $\bu \in \Sp{d-1}$. 
Recall the definition of s-convexity from Definition~\ref{def:s-convex}.

\begin{lemma}
\label{lem:Dplus}
The sets $\cP_+$, $\cP_-$, $\cP_+ \cup \cE_-$, $\cP_- \cup \cE_+$,
$\cD_+$, and $\cD_-$ are s-convex.
\end{lemma}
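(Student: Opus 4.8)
The plan is to verify the s-convexity condition from Definition~\ref{def:s-convex} directly for each of the six sets, exploiting the fact that the defining limit behaviours are preserved under the relevant radial combinations. The key observation is that for $\bu, \bv \in \Sp{d-1}$ with $\bv \neq -\bu$ and $\alpha \in (0,1)$, the vector $\bw = \alpha \bu + (1-\alpha)\bv$ is a nonzero positive combination of $\bu$ and $\bv$, so $S_n \cdot \bw = \alpha (S_n \cdot \bu) + (1-\alpha)(S_n \cdot \bv)$, and $I_\alpha(\bu,\bv) = \bw / \|\bw\|$ differs from $\bw$ only by a fixed positive scalar. Hence $S_n \cdot I_\alpha(\bu,\bv)$ is a fixed positive multiple of $\alpha(S_n\cdot\bu) + (1-\alpha)(S_n\cdot\bv)$, and its asymptotic behaviour is determined by those of $S_n \cdot \bu$ and $S_n \cdot \bv$.

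For $\cP_+$: if $\bu, \bv \in \cP_+$ then $S_n \cdot \bu \to +\infty$ and $S_n \cdot \bv \to +\infty$, so the positive combination $\alpha(S_n\cdot\bu)+(1-\alpha)(S_n\cdot\bv) \to +\infty$, whence $S_n \cdot I_\alpha(\bu,\bv) \to +\infty$, i.e.\ $I_\alpha(\bu,\bv) \in \cP_+$; the endpoints $\alpha\in\{0,1\}$ are trivial. The argument for $\cP_-$ is identical with signs reversed (or by $\cP_- = -\cP_+$). For $\cD_+$ and $\cD_-$, the same computation applies after intersecting the relevant almost-sure events over $\bu$ and $\bv$ (there are only two of them), so the combined event still has probability one. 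For $\cP_+ \cup \cE_-$: if $\bu, \bv$ each lie in this set, then each of $S_n\cdot\bu$, $S_n\cdot\bv$ either tends to $+\infty$ or has $\liminf_{n\to\infty}(S_n\cdot\cdot) \in \R$; in all four combinations the positive combination $\alpha(S_n\cdot\bu)+(1-\alpha)(S_n\cdot\bv)$ has $\liminf > -\infty$ — indeed, $\liminf(\alpha a_n + (1-\alpha)b_n) \geq \alpha\liminf a_n + (1-\alpha)\liminf b_n$, and each term on the right is either a finite real or $+\infty$, so the sum is $> -\infty$ (we must also note the $\liminf$ cannot equal $+\infty$ unless both sequences tend to $+\infty$, but that case lands in $\cP_+$). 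So $\liminf_{n\to\infty}(S_n \cdot I_\alpha(\bu,\bv)) > -\infty$, which by the trichotomy in Lemma~\ref{lem:projections-all} forces $S_n \cdot I_\alpha(\bu,\bv)$ to tend to $+\infty$ or to have a finite $\liminf$, i.e.\ $I_\alpha(\bu,\bv) \in \cP_+ \cup \cE_-$. The set $\cP_- \cup \cE_+$ is handled symmetrically (it equals $-(\cP_+\cup\cE_-)$, and s-convexity is preserved under $A \mapsto -A$ since $I_\alpha(-\bu,-\bv) = -I_\alpha(\bu,\bv)$).

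The only mildly delicate point — and the one I would be most careful about — is the bookkeeping for $\cP_+ \cup \cE_-$ and $\cP_- \cup \cE_+$, specifically making sure the four case combinations really all yield a $\liminf$ that is either finite or $+\infty$ and never $-\infty$, and invoking Lemma~\ref{lem:projections-all} to conclude that ``$\liminf > -\infty$'' is equivalent to membership in $\cP_+ \cup \cE_-$. Everything else is the routine linear-algebra observation that $S_n \cdot (\,\cdot\,)$ is linear in its argument and that radial normalization is a harmless positive rescaling. I would present $\cP_+$ in full, note $\cP_-$, $\cD_+$, $\cD_-$ follow by the same argument (with an a.s.-intersection for the deterministic versions), and give the $\cP_+ \cup \cE_-$ case with the $\liminf$ inequality spelled out, then remark that $\cP_- \cup \cE_+ = -(\cP_+\cup\cE_-)$.
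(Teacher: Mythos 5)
Your proposal is correct and follows essentially the same route as the paper: linearity of $\bw \mapsto S_n \cdot \bw$, the observation that radial normalization is a harmless positive rescaling, superadditivity of $\liminf$ for $\cP_+ \cup \cE_-$ (the paper phrases this via the characterization that $\bu \in \cP_+ \cup \cE_-$ if and only if $\liminf_{n\to\infty}(S_n\cdot\bu) > -\infty$), and symmetry for the remaining sets. One minor point: your appeal to Lemma~\ref{lem:projections-all} at the end of the $\cP_+ \cup \cE_-$ case is both unnecessary and not quite legitimate, since that lemma is an almost-sure statement about a fixed deterministic direction, whereas $I_\alpha(\bu,\bv)$ may here be random (the points $\bu,\bv$ of the random set $\cP_+\cup\cE_-$ can depend on the sample point); nothing is lost, however, because the purely deterministic fact that $\liminf > -\infty$ forces either a finite $\liminf$ (membership in $\cE_-$) or $\liminf = +\infty$ (membership in $\cP_+$) already completes the argument, exactly as in the paper.
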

\begin{proof}
Suppose that $\bu, \bv \in \cP_+$ with $\bv \neq - \bu$. Then
\[ S_n \cdot ( \alpha \bu + (1-\alpha) \bv ) = \alpha S_n \cdot \bu + (1-\alpha ) S_n \cdot \bv ,\]
and both $S_n \cdot \bu$ and $S_n \cdot \bv$ tend to infinity, so $I_\alpha (\bu, \bv) \in \cP_+$
for all $\alpha \in [0,1]$. Hence $\cP_+$ is s-convex, and so is $\cP_- = - \cP_+$ as well.
The argument for $\cD_+$, $\cD_-$ is essentially the same.
Note that $\bu \in \cP_+ \cup \cE_-$
if and only if $\liminf_{n \to \infty} (S_n \cdot \bu) > - \infty$.
Hence if $\bu, \bv \in \cP_+ \cup \cE_-$,
\[ \liminf_{n \to \infty} ( S_n \cdot ( \alpha \bu + (1-\alpha) \bv ) )
\geq \alpha \liminf_{n \to \infty} (S_n \cdot \bu) + (1-\alpha ) 
\liminf_{n \to \infty} (S_n \cdot \bv) > -\infty ,\]
so $\cP_+ \cup \cE_-$ is s-convex; similarly for  $\cP_- \cup \cE_+$.
\end{proof}

The following result shows that random set $\cP_+$ can differ from the non-random set $\cD_+$ in a rather limited way. In particular,
since $\cP_+$ and $\cD_+$ are s-convex (by Lemma~\ref{lem:Dplus}), Proposition~\ref{prop:P-plus-minus}(i) 
with Lemma~\ref{lem:boundary} shows that
$\Pr ( \partial_s \cP_+ = \partial_s \cD_+ ) =1$. Similarly for $\cP_-$ and $\cD_-$.

\begin{proposition}
\phantomsection
\label{prop:P-plus-minus}
\begin{itemize}
\item[(i)] We have  
\begin{align*}
\Pr ( \cl \cP_+ = \cl \cP_+ \cup \cl \cE_- = \cl \cD_+ ) =1, ~\text{and}~ \Pr ( \cl \cP_- = \cl \cP_- \cup \cl \cE_+ = \cl \cD_- ) =1.
\end{align*}
\item[(ii)] Moreover, $\Pr ( \cl \cE_+ \subseteq \partial_s \cD_-) = \Pr ( \cl \cE_- \subseteq \partial_s \cD_+) = 1$.
\end{itemize}
\end{proposition}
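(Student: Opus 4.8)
The plan is to first establish a deterministic skeleton for the random sets and then use a countable-dense-direction argument together with the Hewitt–Savage zero–one law to pin the random sets to their non-random counterparts, exactly as in the proof of Theorem~\ref{thm:D}. The key point is that all six sets $\cP_+, \cP_-, \cE_+, \cE_-, \cD_+, \cD_-$ are describable through countably many ``test events'' of the form $\{ \liminf_{n} (S_n \cdot \bv) > -m \}$ or $\{ \limsup_{n} (S_n \cdot \bv) < m \}$ for $\bv \in \Sp{d-1} \cap \Q^d$ and $m \in \N$; each such event has probability in $\{0,1\}$ by Hewitt–Savage, and the countable family determines a single almost-sure configuration.

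First I would record the inclusions that hold surely: $\cD_+ \subseteq \cP_+$ and $\cP_+ \subseteq \cP_+ \cup \cE_- = \{ \bu : \liminf_n (S_n \cdot \bu) > -\infty \}$, with the analogous chain for the minus sets; and $\cE_+ \cap \cP_+ = \emptyset$, $\cE_- \cap \cP_- = \emptyset$. For a \emph{fixed} $\bu$, Lemma~\ref{lem:projections-all} gives $\Pr(\bu \in \cD_+ ) + \Pr(\bu \in \cD_-) + \Pr(\bu \in \cD_\pm) = 1$ with $\Pr(\bu \in \cE) = 0$; so for fixed $\bu$ all of $\cP_+, \cP_+\cup\cE_-, \cD_+$ agree up to a null event. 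The work is to upgrade this to a statement about all $\bu$ simultaneously, i.e.\ to control the random sets $\cP_+$ and $\cE_-$ as subsets of the sphere.

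Next I would run the rational-ball argument. Let $\cC_+ := \{ \bv \in \Sp{d-1}\cap\Q^d : \Pr( \lim_n (S_n\cdot\bv) = +\infty ) = 1 \}$ and, for each $m$, $\cE_-^{(m)}(\bv)$ the event $\{ \liminf_n (S_n \cdot \bv) > -m \}$. By Hewitt–Savage and countability, there is an almost-sure event on which: (a) for every $\bv \in \cC_+$, $S_n \cdot \bv \to +\infty$, and for every rational $\bv \notin \cC_+$, $\limsup_n(S_n\cdot\bv) = +\infty$ and $\liminf_n (S_n\cdot\bv) = -\infty$ (using Lemma~\ref{lem:projections-all}); and (b) each $\liminf$/$\limsup$ along rational directions equals its deterministic constant. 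On this event, for a rational $\bv$, having $\liminf_n(S_n\cdot\bv) > -\infty$ forces $\bv \in \cC_+$, hence $S_n\cdot\bv\to+\infty$; so $\cP_+ \cap \Q^d = (\cP_+ \cup \cE_-)\cap\Q^d = \cD_+ \cap \Q^d$ a.s. Since $\bv \mapsto \liminf_n(S_n\cdot\bv)$ and $\bv\mapsto\limsup_n(S_n\cdot\bv)$ are, by Lemma~\ref{lem:limsup}, Lipschitz in $\bv$ (apply it to $\hat S_n$ after noting $S_n\cdot\bv$ is a scalar multiple; more precisely use that $|\, (S_n\cdot\bv) - (S_n\cdot\bw)\,| \le \|S_n\|\,\|\bv-\bw\|$, so the divergence type is a closed/open condition that is constant on connected components after passing to closures), the closures of $\cP_+$, $\cP_+\cup\cE_-$ and $\cD_+$ are determined by their rational points, giving $\Pr(\cl\cP_+ = \cl(\cP_+\cup\cE_-) = \cl\cD_+) = 1$. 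This proves part~(i); part~(i) for the minus sets follows by applying the plus case to $-S_n$, or by the symmetries $\cP_- = -\cP_+$ etc.

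For part~(ii): on the almost-sure event above, $\cl\cE_+ \subseteq \cl(\cP_-\cup\cE_+) = \cl\cD_-$ by part~(i), so it remains to show $\cl\cE_+$ avoids $\sint\cD_-$. But $\cD_-$ is s-convex (Lemma~\ref{lem:Dplus}), so by Lemma~\ref{lem:boundary}(i), $\sint\cD_- = \sint\cl\cD_-$. If $\bu \in \sint\cD_-$ then $B_s(\bu;\delta)\subseteq\cD_-$ for some rational $\delta$, and picking a rational $\bv \in B_s(\bu;\delta)$ we get $S_n\cdot\bv \to -\infty$ on our event; combined with $|(S_n\cdot\bu) - (S_n\cdot\bv)| \le \|S_n\|\,\|\bu - \bv\|$ — which is not strong enough on its own since $\|S_n\| \to \infty$ — I would instead argue directly: $\bu \in \sint\cD_- = \sint\cl\cD_-$ together with the s-convexity means $\bu$ lies in the \emph{relative interior} of a spherical polytope $P_s(\bv_1,\dots,\bv_k)$ with rational vertices $\bv_i$ on which $S_n\cdot\bv_i\to-\infty$; writing $\bu$ as a normalized positive combination of the $\bv_i$ gives $S_n\cdot\bu = \sum c_i (S_n\cdot\bv_i)$ with $c_i>0$, hence $S_n\cdot\bu\to-\infty$ and $\bu\notin\cE_+$. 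Taking closures (and using that $\sint\cD_-$ is open and $\cl\cE_+$ misses it) yields $\cl\cE_+\subseteq\cl\cD_-\setminus\sint\cD_- = \partial_s\cD_-$; the statement for $\cl\cE_-$ is the mirror image.

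The main obstacle is the last displayed step in part~(ii): one must be careful that a fixed $\bu$ in the interior of $\cD_-$ can be exhibited as a normalized \emph{strictly positive} conical combination of \emph{rational} directions each lying in $\cD_-$, so that the divergence $S_n\cdot\bu\to-\infty$ is deduced from the rational skeleton rather than from the Lipschitz estimate (which degrades because $\|S_n\|\to\infty$). The s-convexity of $\cD_-$ and the density of $\Sp{d-1}\cap\Q^d$, via Lemma~\ref{lem:s-convex-cone} and the polytope-interior description in the proof of Lemma~\ref{lem:boundary}, supply exactly this; assembling these pieces cleanly is the crux of the argument.
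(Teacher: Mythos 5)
There is a genuine gap at the heart of part (i). The difficult inclusion is $\Pr ( \cl (\cP_+ \cup \cE_-) \subseteq \cl \cD_+ ) = 1$, i.e.\ ruling out (random) directions $\bu$ with $\liminf_{n\to\infty}(S_n\cdot\bu) > -\infty$ lying outside $\cl\cD_+$; your sentence asserting that ``the closures of $\cP_+$, $\cP_+\cup\cE_-$ and $\cD_+$ are determined by their rational points'' is precisely this statement, and it is not proved. As you yourself note, the bound $|S_n\cdot\bv - S_n\cdot\bw| \leq \|S_n\|\,\|\bv-\bw\|$ gives nothing because $\|S_n\|\to\infty$: the divergence type is not an open or closed condition in the direction (that is exactly why exceptional projections are an issue at all). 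Moreover, the conical-combination device that rescues your part (ii) does not transfer to this step: if $\bu$ is a normalized strictly positive combination of rational $\bv_i$, then from $\liminf_n (S_n\cdot\bv_i) = -\infty$ for each $i$ you cannot conclude $\liminf_n (S_n\cdot\bu) = -\infty$, since sequences with $\liminf$ equal to $-\infty$ can oscillate out of phase and have a strictly positive combination bounded below. So a random direction in $\cP_+\cup\cE_-$ away from $\cl\cD_+$ is not excluded by the rational skeleton. (The reverse inclusion $\cl\cD_+ \subseteq \cl\cP_+$ also cannot rely on $\Q^d$: $\cD_+$ may contain no rational direction at all, e.g.\ it is a singleton $\{\bu_0\}$ with $\bu_0$ irrational for a rotated version of Example~\ref{ex:drift-alpha} with $\alpha\in(0,1)$; this part, however, is easily repaired by taking a countable dense subset of the deterministic set $\cD_+$ rather than of the sphere.)

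What is missing is a zero--one argument at the level of the random set itself, and this is how the paper proceeds: $\cP := \cP_+\cup\cE_-$ is s-convex (Lemma~\ref{lem:Dplus}), hence so is $\cl\cP$ (Lemma~\ref{lem:closure}), and Corollary~\ref{cor:s-convex-hull} gives $\cl\cP = \Sp{d-1}\cap\hull\cl\cP$; the compact convex set $\hull\cl\cP$ is determined by its support function, whose values at the countably many points of $\Q^d$ are each a.s.\ constant by Hewitt--Savage, so $\cl\cP$ is a.s.\ equal to a deterministic closed s-convex set $S$. One then compares $S$ with $\cl\cD_+$ using a countable dense subset of $S$ itself, where the fixed-direction trichotomy and $\Pr(\bv\in\cE)=0$ apply, and s-convexity of $\cP$ upgrades ``no points of $\cP$ in a dense subset of a neighbourhood'' to ``no points of $\cl\cP$ near $\bu$''. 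Some argument of this kind, making the random closure deterministic before comparing it with $\cD_+$, is what you need to close the gap. Your part (ii) reduction (positive rational combinations forcing $S_n\cdot\bu\to-\infty$ for $\bu\in\sint\cD_-$) is sound and is a pleasant alternative to the paper's boundary argument via Lemma~\ref{lem:boundary}, but it presupposes part (i), so the gap above is the one that must be closed.
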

\begin{proof}
For part~(i), it suffices to prove the first statement.
For ease of notation, write $\cP = \cP_+ \cup \cE_-$.
Since, by Lemma~\ref{lem:Dplus}, $\cP$ is s-convex, so is $\cl \cP$, by Lemma~\ref{lem:closure}.
Thus, by Corollary~\ref{cor:s-convex-hull}, $\cl \cP = \Sp{d-1} \cap \hull \cl \cP$.
Since $\cl \cP$ is bounded, $A = \hull \cl \cP = \cl \hull \cP$ \cite[p.~45]{gruber}.
The set $A$ is convex and compact,
 and so it is uniquely determined by its support function $h_A : \R^{d} \to \R$ given by $h_A(\bx) = \sup \{ \bx \cdot \by : \by \in A \}$,
which is continuous~\cite[p.~56]{gruber}. Since $\Q^d$ is
dense in $\R^{d}$, $h_A$ is determined by $\{ h_A (\bx) : \bx \in \Q^d \}$.
By the Hewitt--Savage theorem, each member of this countable collection of   random variables is a.s.~constant,
so $h_A$ is a.s.~constant. Thus the set $A$ is non-random, and then 
$\Pr ( \cl \cP =  S ) = 1$ for the non-random closed, s-convex set $S = \Sp{d-1} \cap A$.
Note that 
\[ \Pr ( \bu \in \cl \cP ) = \begin{cases} 1 & \text{if } \bu \in S,\\
0 &\text{if } \bu \notin S. \end{cases} \]

Since every $\bu \in \cD_+$ has $\Pr ( \bu \in \cP_+ \subseteq \cP ) = 1$, we have $\cD_+ \subseteq S$,
and since $S$ is closed, $\cl \cD_+ \subseteq S$.
On the other hand, if $S \setminus \cl \cD_+ \neq \emptyset$,
there is some $\bu \in S \setminus \cl \cD_+$ and some $\eps >0$
such that $S \cap B_s (\bu ; \eps)$ 
does not intersect $\cl \cD_+$.
The compact set $S$  contains a countable dense subset, $Q$, say,
and every $\bv \in Q \cap B_s (\bu ; \eps)$ has $\bv \notin \cD_+$,
 so $\Pr ( \bv \in \cP_+) =0$. Also, $\Pr ( \bv \in \cE_- ) = 0$. 
Thus no member of $Q \cap B_s (\bu ; \eps)$ is in $\cP$. Since $\cP$ is s-convex with closure~$S$, this implies that there is a neighbourhood of $\bu$ in $S$
that does not intersect $\cP$. Hence $\bu \in S \setminus \cl \cP$. But $\Pr ( S \setminus \cl \cP = \emptyset ) = 1$.
Thus $\Pr ( \cl \cP_+ \cup \cl \cE_- =  \cl \cD_+) = 1$. Repeating the preceding argument, but taking $\cP = \cP_+$ throughout,
gives $\Pr ( \cl \cP_+   =  \cl \cD_+) = 1$~too.

For part (ii), we have from~(i) that $\Pr (   \cl \cE_- \subseteq \cl \cP_+ ) =1$. Moreover,  
we must have $\Pr (   \cl \cE_- \cap \,\sint \cP_+ = \emptyset ) =1$, or else we would have $\cE_- \cap \cP_+ \neq \emptyset$. Thus 
$\Pr ( \cl \cE_- \subseteq \partial_s \cP_+ ) = 1$. But since $\cP_+$ and $\cD_+$ are s-convex and a.s.~have the same closure,  
Lemma~\ref{lem:boundary} shows that $\Pr (  \partial_s \cP_+ =   \partial_s \cD_+) = 1$.
This gives~(ii).
\end{proof}

\begin{corollary}
\label{cor:P-plus-minus}
If $\cD_\pm = \Sp{d-1}$, then $\Pr ( \cP_\pm = \Sp{d-1} ) = 1$.
\end{corollary}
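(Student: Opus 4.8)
The plan is to show that if $\cD_\pm = \Sp{d-1}$, then the complement $\Sp{d-1} \setminus \cP_\pm$ is a.s.~empty. By Lemma~\ref{lem:projections-all}, the non-random sets $\cD_+$, $\cD_-$, $\cD_\pm$ partition $\Sp{d-1}$, so the hypothesis $\cD_\pm = \Sp{d-1}$ forces $\cD_+ = \cD_- = \emptyset$. The key observation is that $\Sp{d-1} \setminus \cP_\pm = \cE_+ \cup \cE_-$: indeed, for any $\bu \in \Sp{d-1}$, the one-dimensional random walk $S_n \cdot \bu$ either tends to $+\infty$, tends to $-\infty$, or oscillates with $\liminf = -\infty$ and $\limsup = +\infty$, \emph{provided} $\Pr(X \cdot \bu = 0) < 1$; but when exceptional behaviour occurs one at least has a finite $\limsup$ or a finite $\liminf$, i.e.~$\bu \in \cE_+ \cup \cE_-$. (More carefully: $\bu \notin \cP_\pm$ means we are not in the oscillating-to-both-infinities case, so either $\limsup (S_n \cdot \bu) < \infty$, giving $\bu \in \cE_+$, or $\limsup (S_n \cdot \bu) = +\infty$ but then $\liminf (S_n \cdot \bu) > -\infty$, giving $\bu \in \cE_-$.)

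The core of the argument is then to invoke Proposition~\ref{prop:P-plus-minus}(ii), which gives $\Pr(\cl \cE_+ \subseteq \partial_s \cD_-) = \Pr(\cl \cE_- \subseteq \partial_s \cD_+) = 1$. Since $\cD_+ = \cD_- = \emptyset$ under our hypothesis, both $\partial_s \cD_+$ and $\partial_s \cD_-$ are empty, so $\Pr(\cl \cE_+ = \cl \cE_- = \emptyset) = 1$, and in particular $\Pr(\cE_+ = \cE_- = \emptyset) = 1$. Combining with the set identity from the previous paragraph, $\Pr(\Sp{d-1} \setminus \cP_\pm = \emptyset) = 1$, which is the claim.

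I would write this up in three or four sentences: first record that $\cD_\pm = \Sp{d-1}$ together with Lemma~\ref{lem:projections-all} gives $\cD_+ = \cD_- = \emptyset$; next verify the identity $\Sp{d-1} \setminus \cP_\pm = \cE_+ \cup \cE_-$ directly from the definitions and the $\R$-valued walk trichotomy underlying Lemma~\ref{lem:projections-all}; then apply Proposition~\ref{prop:P-plus-minus}(ii) to conclude $\Pr(\cE_+ \cup \cE_- = \emptyset) = 1$; and finish. I do not anticipate a serious obstacle — the main point requiring care is the elementary but slightly fussy verification that failing to be in $\cP_\pm$ really does put $\bu$ into $\cE_+ \cup \cE_-$, which comes down to noting that the only alternative to oscillating between $\pm\infty$ is that at least one of $\liminf(S_n\cdot\bu)$, $\limsup(S_n\cdot\bu)$ is finite, and we have already excluded the degenerate case $\Pr(X \cdot \bu = 0) = 1$ by the genuinely-$d$-dimensional assumption.

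\begin{proof}
By Lemma~\ref{lem:projections-all}, the sets $\cD_+, \cD_-, \cD_\pm$ partition $\Sp{d-1}$, so if $\cD_\pm = \Sp{d-1}$ then $\cD_+ = \cD_- = \emptyset$, and hence $\partial_s \cD_+ = \partial_s \cD_- = \emptyset$. By Proposition~\ref{prop:P-plus-minus}(ii) it follows that $\Pr ( \cl \cE_+ = \cl \cE_- = \emptyset ) = 1$, and in particular $\Pr ( \cE_+ = \cE_- = \emptyset ) = 1$.

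It remains to observe that $\Sp{d-1} \setminus \cP_\pm \subseteq \cE_+ \cup \cE_-$. Indeed, let $\bu \in \Sp{d-1} \setminus \cP_\pm$. By the trichotomy for the one-dimensional random walk $S_n \cdot \bu$ (see e.g.~\cite[Theorem~4.1.2]{dur}), and since the genuinely $d$-dimensional assumption rules out $\Pr ( X \cdot \bu = 0 ) = 1$, exactly one of the following holds a.s.:
$S_n \cdot \bu \to + \infty$, or $S_n \cdot \bu \to -\infty$, or $-\infty = \liminf_{n \to \infty} (S_n \cdot \bu) < \limsup_{n \to \infty} (S_n \cdot \bu) = +\infty$. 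The last case is excluded since $\bu \notin \cP_\pm$. In the first case $\liminf_{n \to \infty} (S_n \cdot \bu) = +\infty \in \R$ fails, but $\limsup_{n \to \infty} (S_n \cdot \bu) = +\infty \notin \R$; however, in that case also $\liminf_{n \to \infty} (S_n \cdot \bu) > -\infty$, so $\bu \in \cE_-$. Wait — more directly: if $\bu \notin \cP_\pm$, then it is not the case that both $\liminf_{n \to \infty} (S_n \cdot \bu) = -\infty$ and $\limsup_{n \to \infty} (S_n \cdot \bu) = +\infty$; hence either $\limsup_{n \to \infty} (S_n \cdot \bu) < +\infty$, so that $\bu \in \cE_+$, or $\liminf_{n \to \infty}(S_n \cdot \bu) > -\infty$, so that $\bu \in \cE_-$. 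In either case $\bu \in \cE_+ \cup \cE_-$.

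Therefore $\Sp{d-1} \setminus \cP_\pm \subseteq \cE_+ \cup \cE_-$, and since $\Pr ( \cE_+ = \cE_- = \emptyset ) = 1$, we conclude $\Pr ( \cP_\pm = \Sp{d-1} ) = 1$.
\end{proof}
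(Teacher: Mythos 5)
There is a genuine gap: your claimed inclusion $\Sp{d-1} \setminus \cP_\pm \subseteq \cE_+ \cup \cE_-$ is false as a deterministic (pointwise) statement, and your verification of it contains the error. If $\bu \notin \cP_\pm$, the alternatives are not only that one of the two extreme limits is \emph{finite}: one may also have $\bu \in \cP_+$ (where $\liminf_n (S_n \cdot \bu) = \limsup_n (S_n \cdot \bu) = +\infty$, so $\bu \notin \cE_+ \cup \cE_-$) or $\bu \in \cP_-$. In your ``more directly'' step, ``$\limsup_n (S_n\cdot\bu) < +\infty$'' does not give $\bu \in \cE_+$, since the $\limsup$ could equal $-\infty$; likewise ``$\liminf_n (S_n\cdot\bu) > -\infty$'' allows $\liminf = +\infty$. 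The correct decomposition is $\Sp{d-1} \setminus \cP_\pm = \cP_+ \cup \cP_- \cup \cE_+ \cup \cE_-$, so you must also rule out the \emph{random} sets $\cP_+$ and $\cP_-$ being nonempty. You cannot do this by citing the one-dimensional trichotomy for $\bu \in \Sp{d-1}\setminus\cP_\pm$, because that trichotomy is an a.s.\ statement for each \emph{fixed} direction and cannot be applied simultaneously to all (or to random) directions --- that distinction between $\cP_\pm$ and $\cD_\pm$ is precisely the point of this section.

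The repair is immediate and is what the paper does: Proposition~\ref{prop:P-plus-minus}(i) gives $\Pr(\cl \cP_+ = \cl \cD_+) = \Pr(\cl \cP_- = \cl \cD_-) = 1$, and since $\cD_\pm = \Sp{d-1}$ forces $\cD_+ = \cD_- = \emptyset$ (Lemma~\ref{lem:projections-all}), one gets $\Pr(\cP_+ = \cP_- = \emptyset) = 1$ in addition to your (correct) deduction $\Pr(\cE_+ = \cE_- = \emptyset)=1$ from part~(ii); in fact part~(i) alone already yields $\Pr(\cl\cP_+ \cup \cl\cE_- = \emptyset) = \Pr(\cl\cP_- \cup \cl\cE_+ = \emptyset) = 1$, which together with the decomposition above finishes the proof. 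So your argument is salvageable with one extra citation, but as written the key set inclusion, and hence the conclusion, does not follow.
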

\begin{proof}
If $\cD_\pm = \Sp{d-1}$, then $\cD_+ = \cD_- = \emptyset$, by Lemma~\ref{lem:projections-all}, 
and Proposition~\ref{prop:P-plus-minus}
shows that $\Pr ( \cl \cP_+ \cup \cl \cP_- \cup \cl \cE = \emptyset) = 1$.
\end{proof}

We turn briefly to the question of whether $\cE$ is in fact empty.

\begin{lemma}
\label{lem:perfect}
With probability 1, $\cl \cE$ is a perfect set.
\end{lemma}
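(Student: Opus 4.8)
The plan is to show that $\cl\cE$, which is closed by construction, has no isolated points; together these say precisely that $\cl\cE$ is a perfect set (the empty set, which arises for instance when $d\le 2$, being perfect under this convention). The argument rests on two facts. First, $\cl\cE$ is, up to a null event, a \emph{deterministic} subset of $\Sp{d-1}$. Second, for each fixed $\bu\in\Sp{d-1}$ we have $\Pr(\bu\in\cE)=0$, which we have already recorded as a consequence of Lemma~\ref{lem:projections-all}.

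For the first fact, observe that $\cE$, and hence its closure $\cl\cE$, is left unchanged by any finite permutation of the increments $X_1,X_2,\ldots$: if $\sigma$ permutes $\{1,\dots,N\}$ and fixes the rest then $S_n$ is unchanged for every $n\ge N$, so $\limsup_{n\to\infty}(S_n\cdot\bu)$ and $\liminf_{n\to\infty}(S_n\cdot\bu)$ are unchanged for every $\bu$, whence $\cE$ is unchanged. Consequently, for every open ball $B$ the event $\{\cl\cE\cap B=\emptyset\}=\{\cE\cap B=\emptyset\}$ is invariant under finite permutations of the increments, and so by the Hewitt--Savage zero--one law it has probability $0$ or $1$. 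Running this over the countable family $\cR$ of rational balls and intersecting the resulting almost-sure events, we obtain a deterministic closed set $E^\star\subseteq\Sp{d-1}$ (namely the complement in $\Sp{d-1}$ of the union of those $B\in\cR$ for which $\Pr(\cl\cE\cap B=\emptyset)=1$) with $\Pr(\cl\cE=E^\star)=1$; alternatively one may invoke the general zero--one law for random closed sets, Proposition~1.1.30 of~\cite{molchanov}.

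The crux is then the following observation: for any fixed $\bu_0\in\Sp{d-1}$, almost surely $\bu_0$ is not an isolated point of $\cl\cE$. Indeed, almost surely $\bu_0\notin\cE$ by the second fact above; on this event, if $\bu_0\notin\cl\cE$ there is nothing to prove, while if $\bu_0\in\cl\cE$ then every open neighbourhood $U$ of $\bu_0$ meets $\cE$, and any point of $U\cap\cE$ differs from $\bu_0$ (since $\bu_0\notin\cE$) and lies in $\cl\cE$, so $U$ meets $\cl\cE\setminus\{\bu_0\}$; thus $\bu_0$ is an accumulation point, not an isolated point, of $\cl\cE$. To finish, suppose for contradiction that the deterministic set $E^\star$ has an isolated point $\bu_0$. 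On the probability-one event $\{\cl\cE=E^\star\}$ this $\bu_0$ is an isolated point of $\cl\cE$, contradicting the previous observation. Hence $E^\star$ has no isolated points, and since $\cl\cE=E^\star$ almost surely, $\cl\cE$ is almost surely a perfect set.

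The only real obstacle is the determinism step: one should check that $\cl\cE$ is genuinely a random closed set, i.e.\ that the events $\{\cl\cE\cap B=\emptyset\}$ are measurable, which holds because $\{\cE\cap B\ne\emptyset\}$ is the projection onto the sample space of a Borel set and hence is universally measurable. Granting that, the Hewitt--Savage argument (or the cited result) applies, and everything else reduces to the elementary remark that an isolated point of a deterministic set is a fixed direction, to which the vanishing of $\Pr(\bu\in\cE)$ applies.
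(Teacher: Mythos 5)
Your proof is correct, and it rests on the same two pillars as the paper's: the Hewitt--Savage zero--one law and the fact that $\Pr(\bu \in \cE) = 0$ for each fixed $\bu$ (Lemma~\ref{lem:projections-all}). The organization differs, though. The paper works locally: for each rational ball $B$ it shows the counting variable $N(B) = \#(B \cap \cl\cE)$ is a.s.\ constant, rules out a finite nonzero value by observing that the finitely many points of $B \cap \cE$ would themselves be a.s.\ constant directions lying in $\cE$ with probability one, and deduces the dichotomy $N(B) \in \{0,\infty\}$ over all $B \in \cR$, which forbids isolated points. You instead prove the stronger global statement that $\cl\cE$ coincides a.s.\ with a deterministic closed set $E^\star$ (in the spirit of Theorem~\ref{thm:D} and the remark in Section~\ref{sec:directions} about the zero--one law for random closed sets from~\cite{molchanov}), and then kill a putative isolated point of $E^\star$ by the neat pointwise observation that a fixed direction is a.s.\ not in $\cE$ and hence a.s.\ not isolated in $\cl\cE$. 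What your route buys is the extra conclusion that $\cl\cE$ is itself a.s.\ deterministic, and it avoids the paper's slightly informal step of ``labelling the elements of $B\cap\cE$ in an arbitrary order'' as random variables to which Hewitt--Savage is applied (which tacitly needs a measurable selection); what it costs is the need to check measurability and exchangeability of the events $\{\cl\cE \cap B = \emptyset\}$, which you correctly flag and resolve via universal measurability and the extension of the zero--one law to almost-invariant events --- a point at which the paper's own argument is no more rigorous, since it too treats $N(B)$ as a bona fide random variable without comment.
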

\begin{proof}
For a measurable $B \subseteq \Sp{d-1}$ and let $N(B) = \# (B \cap \cl \cE)$,
the number of points of $\cl \cE$ in $B$. 
We claim that, for any $B$ that is open in $\Sp{d-1}$, 
\begin{equation}
\label{eq:number-of-projections}
\Pr ( N(B) = 0 ) = 1 \text{ or } \Pr ( N(B) = \infty ) =1.\end{equation}
Indeed, the $\ZP \cup \{ \infty \}$-valued random variable $N(B)$
is a.s.~constant, by the Hewitt--Savage theorem: $\Pr ( N(B) = K ) = 1$
for some (non-random) $K$. If $1 \leq K < \infty$, we may label the
elements of $B \cap \cl \cE = B \cap \cE$ in an arbitrary order as $\bu_1, \ldots, \bu_K$,
and each is a.s.~constant, by the Hewitt--Savage theorem again,
so there exist constant $\bu_1, \ldots, \bu_K \in B$
with $\Pr ( \bu_j \in  \cE ) = 1$ for each $j$. But $\Pr ( \bu \in \cE ) = 0$ for all $\bu$.
 Hence $K \in \{0,\infty\}$. This establishes~\eqref{eq:number-of-projections}.

Recall that $\cR$ denotes the (countable) set of all $B_s (\bu ; r)$ with $\bu \in \Q^d \cap \Sp{d-1}$
and $r \in \Q \cap (0,\infty)$. From~\eqref{eq:number-of-projections} we have that $\Pr ( N(B) \in \{ 0 , \infty \} \text{ for all } B \in \cR ) = 1$, which means
 that $\cl \cE$ contains no isolated points.
\end{proof}

\begin{corollary}
\label{cor:exceptional}
Suppose that $d \in \{1,2\}$. Then $\Pr ( \cl \cE = \emptyset ) =1$.
\end{corollary}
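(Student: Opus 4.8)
The plan is to deduce Corollary~\ref{cor:exceptional} by combining the structural information already established: that $\cE$ is antipodally symmetric ($\cE = -\cE$), that $\cl \cE$ is a perfect set by Lemma~\ref{lem:perfect}, and, crucially, that $\cl \cE_+$ and $\cl \cE_-$ sit inside the relative boundaries $\partial_s \cD_-$ and $\partial_s \cD_+$ respectively, by Proposition~\ref{prop:P-plus-minus}(ii). For $d=1$ the statement is essentially immediate: $\Sp{0} = \{-1,+1\}$ is a two-point set, so $\cl \cE$, being a subset containing no isolated points, must be empty (any nonempty subset of a finite set has an isolated point). So the substance is the case $d=2$.

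For $d=2$, first I would record that $\cD_+, \cD_-, \cD_\pm$ partition $\Sp{1}$ (Lemma~\ref{lem:projections-all}), and that $\cD_+$ and $\cD_-$ are s-convex (Lemma~\ref{lem:Dplus}); moreover $\cD_- = -\cD_+$. An s-convex subset of $\Sp{1}$ is an arc (possibly empty, a point, all of $\Sp{1}$, or a genuine arc), since s-convexity on the circle forces $A$ to be connected in the relevant sense — this follows from Corollary~\ref{cor:s-convex-hull}, as $A = \Sp{1} \cap \hull A$ and $\hull A$ is a convex subset of $\R^2$. Hence $\partial_s \cD_+$ consists of at most two points (the endpoints of the arc $\cD_+$), and likewise $\partial_s \cD_-$. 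By Proposition~\ref{prop:P-plus-minus}(ii), $\cl \cE_+ \subseteq \partial_s \cD_-$ and $\cl \cE_- \subseteq \partial_s \cD_+$, so $\cl \cE = \cl \cE_+ \cup \cl \cE_-$ is contained in a set of at most four points. But a finite nonempty set always has an isolated point, contradicting Lemma~\ref{lem:perfect} unless $\cl \cE = \emptyset$. This yields $\Pr(\cl \cE = \emptyset) = 1$.

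The one technical point to nail down carefully is the claim that an s-convex subset of $\Sp{1}$ has a relative boundary of at most two points. I would argue: let $A \subseteq \Sp{1}$ be s-convex; by Corollary~\ref{cor:s-convex-hull}, $A = \Sp{1} \cap \hull A$, where $C := \hull A$ is a convex compact (after taking closure, but here I can work with $\cl A$ which is also s-convex by Lemma~\ref{lem:closure}, and by Lemma~\ref{lem:boundary}(ii) has the same relative boundary as $A$) subset of $\R^2$. If $\0 \notin C$, then $\Sp{1} \cap C$ is the radial image of a convex body not containing the origin, hence a closed arc subtending an angle $< \pi$, whose relative boundary in $\Sp{1}$ is its two endpoints. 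If $\0 \in C$ but $\0 \notin \Int C$, then $C$ lies in a closed half-plane through $\0$ and $\Sp{1} \cap C$ is a closed arc of length $\le \pi$, again with at most two boundary points. If $\0 \in \Int C$, then $C \supseteq B(\0;\delta)$ for some $\delta$, forcing $\Sp{1} \cap C = \Sp{1}$, so $A$ is dense in $\Sp{1}$ and $\partial_s \cl A = \emptyset$. In all cases $|\partial_s A| \le 2$.

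The main obstacle I expect is precisely this geometric lemma about s-convex subsets of the circle — it is where the restriction $d \le 2$ is genuinely used, since in $\R^d$ with $d \ge 3$ an s-convex set can have a large (even positive-measure) relative boundary, so the finiteness argument collapses. Everything else is bookkeeping: assembling the inclusions $\cl \cE_\pm \subseteq \partial_s \cD_\mp$ from Proposition~\ref{prop:P-plus-minus}, using $\cE = \cE_+ \cup \cE_-$, and invoking Lemma~\ref{lem:perfect} to rule out a nonempty finite limit set. I would present the proof as: (1) dispose of $d=1$; (2) state and prove the circle lemma ($|\partial_s A| \le 2$ for s-convex $A \subseteq \Sp{1}$); (3) combine with Proposition~\ref{prop:P-plus-minus}(ii) to get $\#\cl\cE \le 4$; (4) conclude via Lemma~\ref{lem:perfect}.
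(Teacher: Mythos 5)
Your proposal is correct and follows essentially the same route as the paper: for $d=2$ it combines Proposition~\ref{prop:P-plus-minus}(ii) with the s-convexity of $\cD_+$ and $\cD_-$ (Lemma~\ref{lem:Dplus}) to bound $\cl\cE$ by at most four points, and then invokes Lemma~\ref{lem:perfect}. The only difference is that you spell out, via Corollary~\ref{cor:s-convex-hull} and a case analysis on the position of the origin relative to the hull, the geometric fact that an s-convex subset of $\Sp{1}$ has at most two relative boundary points, which the paper simply asserts.
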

\begin{proof}
For $d=1$ this is evident, so suppose that $d=2$. By Proposition~\ref{prop:P-plus-minus}, $\Pr ( \cl \cE_- \subseteq \partial_s \cD_+ ) = 1$, while
Lemma~\ref{lem:Dplus} shows that $\cD_+$ is s-convex, so $\partial_s \cD_+$
  contains at most two points. 	Similarly for $\cl \cE_+$. Thus $\cl \cE$ has at most four points. Lemma~\ref{lem:perfect} then shows that $\Pr ( \cl \cE = \emptyset ) = 1$.
\end{proof}

\section{The convex hull}
\label{sec:hull}

For $n \in \ZP$ let $\cH_n := \hull \{ S_0, S_1, \ldots, S_n \}$ (a convex polytope).
Set $\cH_\infty := \cup_{n \geq 0} \cH_n$.
If $x, y \in \cH_\infty$ then $x, y \in \cH_n$ for some $n$, and since $\cH_n$
is convex, $\theta x + (1-\theta) y \in \cH_n \subseteq \cH_\infty$ for all $\theta \in [0,1]$.
Thus $\cH_\infty$ is convex, and hence so is $\cl \cH_\infty$~\cite[p.~44]{gruber}.
Define
\begin{equation}
\label{eq:S-infinity}
 \cS_\infty := \{ S_0, S_1, \ldots \} .\end{equation}
If $S_n$ is transient, then $\cS_\infty$ has no finite limit points.
Since $\cH_n \subseteq \hull \cS_\infty$, we have~$\cH_\infty \subseteq
\hull \cS_\infty$, while $\cH_\infty$ is a convex set containing $\cS_\infty$, so
$\hull \cS_\infty \subseteq \cH_\infty$. That is,
\[  \cH_\infty = \hull \cS_\infty = \hull \{ S_0, S_1, S_2, \ldots \} .\]
Also define
\[ r_n := \inf \{ \| \bx \| : \bx \in \R^d \setminus \cH_n \} .\]
Note that $r_n$ is non-decreasing, so $r_\infty := \lim_{n \to \infty} r_n$
exists in $[0,\infty]$.
In~\cite{mcrw} it is shown that if $\Pr ( r_\infty = \infty) =1$, then
there is a zero--one law for random variables that are tail-measurable
for the sequence $\cH_0, \cH_1, \cH_2, \ldots$: see~\cite[\S 3]{mcrw}.

\begin{lemma}
\label{lem:r-infinity}
We have $\Pr ( r_\infty = \infty ) = \Pr ( \cH_\infty = \R^d ) \in \{0,1\}$.
\end{lemma}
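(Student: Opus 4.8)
The plan is to establish three things: (a) the event $\{r_\infty = \infty\}$ coincides (up to null sets) with the event $\{\cH_\infty = \R^d\}$; (b) the event $\{\cH_\infty = \R^d\}$ is measurable with respect to the exchangeable $\sigma$-field of the increments $X_1, X_2, \ldots$; and (c) hence, by the Hewitt--Savage zero--one law, $\Pr(\cH_\infty = \R^d) \in \{0,1\}$. First I would verify the set equality. If $r_\infty = \infty$, then for every $R > 0$ there is $n$ with $r_n > R$, which means $B(\0; R) \subseteq \cH_n \subseteq \cH_\infty$; letting $R \to \infty$ gives $\cH_\infty = \R^d$. Conversely, suppose $\cH_\infty = \R^d$ but $r_\infty = c < \infty$. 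Then $r_n \le c$ for all $n$, so for each $n$ there exists $\bx_n \in \R^d \setminus \cH_n$ with $\|\bx_n\| < c + 1$; passing to a convergent subsequence $\bx_{n_k} \to \bx$ with $\|\bx\| \le c+1$. I claim $\bx \notin \cH_\infty$: indeed $\cH_\infty = \R^d$ would force $\bx \in \cH_m$ for some $m$, and since $\cH_m$ is a polytope and $\bx$ is a limit of points outside the nested sets $\cH_{n_k} \supseteq \cH_m$ for large $k$, one needs a small argument — more cleanly, note $\bx_n \notin \cH_n$ means $\bx_n$ can be separated from $\cH_n$ by a hyperplane, but it is easier to observe directly that if $\cH_\infty = \R^d$ then in fact $\bigcup_n \cl(\Int \cH_n) = \R^d$ as well (any point of $\R^d$ lies in some $\cH_n$, and points strictly inside that polytope are interior; boundary points of $\cH_n$ lie interior to $\cH_m$ for suitable $m > n$ unless $S_n$ stops growing, but genuine $d$-dimensionality rules out $\cH_\infty$ being contained in a half-space). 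So $r_n \to \infty$. I would write this contrapositive carefully: $r_\infty < \infty$ means the $r_n$ are bounded, so some ball $B(\0;\rho)^c$ meets $\R^d \setminus \cH_n$ for every $n$, hence $\0$ is within distance $\rho$ of the complement of every $\cH_n$; but if $\cH_\infty = \R^d$ then eventually $B(\0;2\rho) \subseteq \cH_n$, a contradiction once $n$ is large enough that $\cH_n \supseteq B(\0;2\rho)$. The genuine $d$-dimensionality enters to guarantee $\cH_n$ eventually has nonempty interior, so that "$B(\0;2\rho) \subseteq \cH_n$" is not vacuous; more precisely it ensures $\cH_\infty = \R^d$ is consistent with $\0 \in \Int \cH_n$ for some finite $n$.

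Next, for the zero--one law: the event $\{\cH_\infty = \R^d\}$ is a function of $(S_0, S_1, \ldots) = (\0, X_1, X_1+X_2, \ldots)$, hence of $(X_1, X_2, \ldots)$. Crucially it is invariant under finite permutations of the increments: permuting $X_1, \ldots, X_N$ changes the finite-time polytopes $\cH_n$ for $n < N$ but leaves $\cH_n$ for $n \ge N$ unchanged as a set, because $\{S_0, \ldots, S_N\}$ as a set — no, that is not quite right either, since partial sums do change. The cleaner route: $\cH_\infty = \hull\{S_0, S_1, S_2, \ldots\}$, and permuting finitely many increments produces a new sequence of partial sums that agrees with the old one for all $n \ge N$; hence the two sequences have the same set of points from index $N$ onward, and adjoining finitely many extra points $S_0', \ldots, S_{N-1}'$ to a set changes its convex hull, but whether that hull equals all of $\R^d$ is unaffected by adding or removing finitely many bounded points — indeed $\hull(F \cup G) = \R^d \iff \hull G = \R^d$ when $F$ is finite is \emph{false} in general (a finite set can't help, so actually $\hull(F \cup G) = \R^d \Rightarrow \hull G = \R^d$ only if... no). Let me instead argue: $\hull\{S_n : n \ge 0\} = \R^d$ if and only if $\hull\{S_n : n \ge N\} = \R^d$ for every/some $N$, because the tail partial sums are unbounded in the relevant way and a finite set of points cannot create an unbounded hull from a bounded one, while removing finitely many points from an unbounded-hull set: if $\hull\{S_n : n \ge 0\} = \R^d$, then for the hull to fail to be $\R^d$ after removing $S_0, \ldots, S_{N-1}$ it would have to be contained in a half-space $H$; but then $S_n \in H$ for all $n \ge N$, so $S_n \cdot \bu$ is bounded above for some $\bu \in \Sp{d-1}$, i.e. $\bu \in \cE_+ \cup \cP_-$, an event of probability zero for fixed $\bu$ — but here $\bu$ is random, so I instead note that $\{S_n : n \ge N\}$ lying in a half-space for some $N$ is exactly the event that $\cl \cH_\infty \ne \R^d$, and reason that $\cl\cH_\infty$ (being convex) equals $\R^d$ iff it is contained in no closed half-space iff $\cH_\infty$ is contained in no closed half-space iff $\hull\{S_n : n \ge N\}$ is contained in no closed half-space for each $N$ — the last equivalence because containment in a half-space is preserved under adding finitely many points only in one direction, but a bounded set is always in \emph{some} half-space, so "$\hull\{S_n:n\ge N\}$ in no half-space" does imply "$\hull\{S_n:n\ge 0\}$ in no half-space" trivially (fewer points, harder to cover), and conversely if $\hull\{S_n : n\ge 0\}$ is in no half-space but $\hull\{S_n : n \ge N\}$ is in half-space $H = \{\bx : \bx\cdot\bu \le c\}$, then $\{S_n : n \ge 0\}$ is in $\{\bx : \bx \cdot \bu \le \max(c, \max_{k<N} S_k\cdot\bu)\}$, a half-space, contradiction. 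Hence $\{\cl\cH_\infty = \R^d\}$ is a tail event of the increment sequence, in particular permutation-invariant, so Hewitt--Savage gives $\Pr(\cl\cH_\infty = \R^d) \in \{0,1\}$; and since $\cH_\infty$ is open-ish... I should note $\cH_\infty = \R^d \iff \cl\cH_\infty = \R^d$ requires care, but if $\cl\cH_\infty = \R^d$ and $\cH_\infty$ is convex with nonempty interior (genuine $d$-dimensionality, once $n$ large) then $\Int\cH_\infty = \Int\cl\cH_\infty = \R^d$, so $\cH_\infty = \R^d$.

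So the final assembly is: $\{r_\infty = \infty\} = \{\cH_\infty = \R^d\}$ by the first paragraph; $\{\cH_\infty = \R^d\} = \{\cl\cH_\infty = \R^d\}$ by the interior argument; $\{\cl\cH_\infty = \R^d\}$ is a permutation-invariant event in the i.i.d.\ increments; Hewitt--Savage (cited as \cite[Theorem~4.1.1]{dur} in the paper) yields probability in $\{0,1\}$. The main obstacle I anticipate is the bookkeeping in showing $\{\cH_\infty = \R^d\}$ is permutation-invariant cleanly — the temptation is to wave hands about "finitely many points don't matter," but the honest statement is the half-space characterization above, and I would write it as: $\cH_\infty = \R^d$ iff for every $\bu \in \Sp{d-1}$, $\sup_n (S_n \cdot \bu) = +\infty$ (equivalently $\limsup_n (S_n\cdot\bu) = +\infty$ since a bounded-above walk with a single finite exceedance is still bounded above), and each such condition, for fixed $\bu$, is manifestly permutation-invariant; intersecting over a countable dense set of $\bu$ and using continuity of $\bu \mapsto \sup_n (S_n\cdot\bu)$ on... actually $\sup_n(S_n\cdot\bu)$ can jump to $+\infty$, so I would phrase it via $\cl\cH_\infty$ and its support function $h_{\cl\cH_\infty}(\bu) = \sup\{\bx\cdot\bu : \bx \in \cl\cH_\infty\} = \sup_n S_n\cdot\bu \in (-\infty,+\infty]$, noting $\cl\cH_\infty = \R^d$ iff $h_{\cl\cH_\infty} \equiv +\infty$ iff $h_{\cl\cH_\infty}(\bu) = +\infty$ for all $\bu$ in a countable dense subset of $\Sp{d-1}$ (using that $\{h = +\infty\}$ is the complement of the convex, hence connected, effective domain), and each $\{h_{\cl\cH_\infty}(\bu) = +\infty\}$ is permutation-invariant. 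This mirrors the support-function argument already used in the proof of Proposition~\ref{prop:P-plus-minus}, so it fits the paper's style.
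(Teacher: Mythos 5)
Your overall route is the same as the paper's: identify the events $\{r_\infty=\infty\}$ and $\{\cH_\infty=\R^d\}$ exactly, then invoke Hewitt--Savage (you additionally spell out the permutation-invariance, which the paper leaves implicit). However, two steps as you have written them are not sound and need repair.

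First, in the direction $\cH_\infty=\R^d\Rightarrow r_\infty=\infty$, the whole content is the claim that a fixed ball $B(\0;\rho)$ is contained in $\cH_n$ for some finite $n$, and you end up simply asserting this (``if $\cH_\infty=\R^d$ then eventually $B(\0;2\rho)\subseteq\cH_n$''). Your attempted justifications do not work: the subsequence/separation sketch is abandoned, and the appeal to genuine $d$-dimensionality is both irrelevant and based on a false statement --- genuine $d$-dimensionality does \emph{not} rule out $\cH_\infty$ being contained in a half-space (Example~\ref{ex:drift-alpha} has $\cH_\infty\subseteq\{\bx\cdot\be_1\geq 0\}$), nor is it true in general that boundary points of $\cH_n$ become interior points of later hulls. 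The correct argument is a two-line use of convexity and monotonicity: choose finitely many points $\bv_0,\ldots,\bv_d$ with $B(\0;\rho)\subseteq\hull\{\bv_0,\ldots,\bv_d\}$; since $\cH_\infty=\R^d$, each $\bv_i\in\cH_{n_i}$ for some $n_i$, so all lie in $\cH_N$ with $N=\max_i n_i$, whence $B(\0;\rho)\subseteq\cH_N$ and $r_N\geq\rho$. (This is also what is hiding behind the terse parenthetical in the paper's proof.)

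Second, the reformulation you say you would ultimately write down --- that $\cl\cH_\infty=\R^d$ if and only if $\sup_n(S_n\cdot\bu)=+\infty$ for all $\bu$ in a fixed countable dense subset of $\Sp{d-1}$ --- is false. The effective domain of the support function of $\cl\cH_\infty$ is a convex cone, but its complement can be dense: if $\cl\cH_\infty$ is a closed half-space $\{\bx\cdot\bw\leq c\}$ (which genuinely occurs, e.g.\ in Example~\ref{ex:drift-alpha} with $\alpha\in(0,1)$, where $\cl\cH_\infty=\{\bx\cdot\be_1\geq 0\}$), then $\sup_n(S_n\cdot\bu)=+\infty$ for every $\bu\neq -\bw$, so the countable test set detects nothing once $-\bw$ avoids it, yet $\cl\cH_\infty\neq\R^d$. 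Fortunately you do not need this reduction: measurability of $\{\cl\cH_\infty=\R^d\}$ is routine, and for Hewitt--Savage you only need invariance under each finite permutation of the increments, which your earlier, correct argument already gives --- a permutation of $X_1,\ldots,X_N$ leaves $\{S_n:n\geq N\}$ unchanged, and whether the closed convex hull equals $\R^d$ is unaffected by adjoining finitely many points, by your half-space bound $\{\bx:\bx\cdot\bu\leq\max(c,\max_{k<N}S_k\cdot\bu)\}$. With these two repairs (and your correct observation that $\cl\cH_\infty=\R^d$ forces $\cH_\infty=\R^d$, since a dense convex set cannot have empty interior), the proof is complete and matches the paper's.
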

\begin{proof}
By definition of $r_n$, 
we have $B(\0; r_n) \subseteq \cH_n \subseteq \cH_\infty$. Thus if $r_\infty = \infty$,
we have $\cH_\infty = \R^d$. On the other hand, if 
$\cH_\infty = \R^d$, then for any $r \in (0,\infty)$
there exists some $n \in \N$ for which $B(\0 ; r ) \subseteq \cH_n$. (If not, there is some $r$
and $\bx \in B(\0;r)$ with $\bx \notin \cH_\infty$.) Then $r_n \geq r$, so $r_\infty \geq r$.
Since $r$ was arbitrary, we get $r_\infty = \infty$. Thus $\Pr ( r_\infty = \infty ) = \Pr ( \cH_\infty = \R^d )$, and the proof is 
completed by the Hewitt--Savage theorem.
\end{proof}

A consequence of a theorem of Carath\'eodory is that if $A \subseteq \R^d$ is compact, then $\hull A$ is also compact (see e.g.~Corollary~3.1
of \cite[p.~44]{gruber}). Thus $\hull \cD$ is compact, by Theorem~\ref{thm:D}.
The following result relates several concepts from earlier  to the question of whether the convex hull eventually fills all of space.
Here `$\Int$' denotes interior.

\begin{theorem}
\label{thm:hull}
Consider the following statements.
\begin{itemize}
\item[(i)] $\0 \in \Int \hull \cD$.
\item[(ii)] $\Pr ( r_\infty = \infty ) = 1$.
\item[(iii)] $\Pr ( \cH_\infty = \R^d ) =1$.
\item[(iv)] $\cD_\pm = \Sp{d-1}$.
\item[(v)] $\0 \in  \hull \cD$.
\end{itemize}
Then the following logical relationships apply: (i) $\Rightarrow$ (ii) $\Leftrightarrow$ (iii) $\Leftrightarrow$ (iv)   $\Rightarrow$ (v).
\end{theorem}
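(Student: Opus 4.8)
The plan is to prove the implications along the route (i)~$\Rightarrow$~(iv), (iii)~$\Leftrightarrow$~(iv), and (iv)~$\Rightarrow$~(v), noting that (ii)~$\Leftrightarrow$~(iii) is immediate from Lemma~\ref{lem:r-infinity}; together these give every arrow in the displayed diagram. Two facts will be used repeatedly. First, by Theorem~\ref{thm:D-infty} we have $\cD=\cD_\infty$, so any $\bv\in\cD$ is, almost surely, the limit of $\hat S_{n_k}$ along some (random) subsequence with $\|S_{n_k}\|\to\infty$. Second, any closed convex proper subset of $\R^d$ lies in some closed half-space (by nearest-point separation), and this half-space may be taken non-random precisely when the set is.

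For (iii)~$\Rightarrow$~(iv) and (i)~$\Rightarrow$~(iv) I would argue contrapositively using the trichotomy of Lemma~\ref{lem:projections-all}. Fix $\bu\in\Sp{d-1}$ and suppose $\bu\notin\cD_\pm$; then $\bu\in\cD_+$ or $\bu\in\cD_-$, and by symmetry we may assume $\bu\in\cD_+$, so $S_n\cdot\bu\to+\infty$ almost surely. Then $c:=\inf_n(S_n\cdot\bu)$ is almost surely finite, whence $\cH_\infty\subseteq\{\bx:\bx\cdot\bu\ge c\}\neq\R^d$ almost surely, contradicting~(iii). Also $\|S_n\|\to\infty$, so for each $\bv\in\cD=\cD_\infty$ we may choose a subsequence with $\hat S_{n_k}\to\bv$ and $S_{n_k}\cdot\bu\to+\infty$, hence $\hat S_{n_k}\cdot\bu>0$ for large $k$ and so $\bv\cdot\bu\ge 0$; thus $\cD$, and hence $\hull\cD$, lies in $\{\bx:\bx\cdot\bu\ge 0\}$, so $\0\notin\Int\hull\cD$, contradicting~(i). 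The case $\bu\in\cD_-$ is symmetric. Hence every $\bu$ lies in $\cD_\pm$, i.e.\ (iv) holds.

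For (iv)~$\Rightarrow$~(iii) I would first invoke Corollary~\ref{cor:P-plus-minus} to upgrade $\cD_\pm=\Sp{d-1}$ to the stronger almost sure statement $\cP_\pm=\Sp{d-1}$, that is, almost surely \emph{every} one-dimensional projection oscillates. If $\cl\cH_\infty\neq\R^d$ on an event of positive probability, then on that event the separation fact produces a (random) $\bu\in\Sp{d-1}$ with $S_n\cdot\bu\le c$ for all $n$, so $\limsup_n(S_n\cdot\bu)<\infty$ and $\bu\notin\cP_\pm$, contradicting $\cP_\pm=\Sp{d-1}$. Hence $\cl\cH_\infty=\R^d$ almost surely; since $\cH_\infty$ is convex and dense, it must then equal $\R^d$ (a small simplex with vertices in $\cH_\infty$ surrounds any given point), and Lemma~\ref{lem:r-infinity} gives that $\Pr(\cH_\infty=\R^d)\in\{0,1\}$, so in fact $\Pr(\cH_\infty=\R^d)=1$.

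Finally, for (iv)~$\Rightarrow$~(v): if $\0\notin\hull\cD$, then, since $\hull\cD$ is compact (by Carath\'eodory and Theorem~\ref{thm:D}) and convex, nearest-point separation gives a \emph{non-random} $\bu\in\Sp{d-1}$ and $\delta>0$ with $\bx\cdot\bu\ge\delta$ for every $\bx\in\hull\cD\supseteq\cD$. Under~(iv) we have $\bu\in\cD_\pm$, so $\liminf_n(S_n\cdot\bu)=-\infty$ almost surely; along a subsequence realising this, $\|S_{n_k}\|\to\infty$ and $\hat S_{n_k}\cdot\bu<0$ for large $k$, and passing to a further subsequence $\hat S_{n_k}\to\bv\in\cD_\infty=\cD$ with $\bv\cdot\bu\le 0$, contradicting $\bv\cdot\bu\ge\delta$. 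Hence $\0\in\hull\cD$. I expect the main obstacle to be (iv)~$\Rightarrow$~(iii): the separating direction extracted from $\cl\cH_\infty$ is genuinely random, so one cannot quote $\cD_\pm=\Sp{d-1}$ directly for it --- this is exactly the role of Corollary~\ref{cor:P-plus-minus} --- and one must also be careful to promote $\cl\cH_\infty=\R^d$ to $\cH_\infty=\R^d$ and to record that the event in question has probability in $\{0,1\}$.
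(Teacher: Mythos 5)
Your proposal is correct; every implication asserted in Theorem~\ref{thm:hull} is covered by your chain (i)~$\Rightarrow$~(iv), (iv)~$\Leftrightarrow$~(iii)~$\Leftrightarrow$~(ii) and (iv)~$\Rightarrow$~(v), and I see no gaps: the half-space argument for (iii)~$\Rightarrow$~(iv), the use of Corollary~\ref{cor:P-plus-minus} for (iv)~$\Rightarrow$~(iii), and the promotion of $\cl \cH_\infty = \R^d$ to $\cH_\infty = \R^d$ are all sound (the only cosmetic slip is writing $\bv \in \cD_\infty$ for the random subsequential limit; strictly $\bv \in L_\infty$, which a.s.\ equals $\cD$ by Theorem~\ref{thm:D-infty}, which is what you use). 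Where you genuinely diverge from the paper is in how (i) enters: the paper proves (i)~$\Rightarrow$~(ii) directly, by a quantitative geometric argument --- pick $\bu_1,\ldots,\bu_m \in \cD$ whose hull has $\0$ interior, use continuity of $(\bv_1,\ldots,\bv_m) \mapsto \hull\{\bv_1,\ldots,\bv_m\}$ in the Hausdorff metric, and use $\cD = \cD_\infty$ to get visits to cones around each $\bu_i$ at arbitrarily large distances, so that $B(\0; r\delta) \subseteq \cH_n$ eventually and $r_\infty = \infty$ a.s. You avoid this entirely: your (i)~$\Rightarrow$~(iv) is a soft separation argument (if some projection $S_n \cdot \bu \to +\infty$ then $\cD$ lies in the closed half-space $\{\bx : \bx \cdot \bu \geq 0\}$, so $\0 \notin \Int \hull \cD$), and (ii) is then reached through (iv)~$\Rightarrow$~(iii), so the only genuinely probabilistic input is Corollary~\ref{cor:P-plus-minus}. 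Similarly, your (iv)~$\Rightarrow$~(v) uses the non-random separating direction for the compact set $\hull \cD$ together with Theorem~\ref{thm:D-infty} at that fixed direction, whereas the paper routes this implication through $\cP_\pm$ and Corollary~\ref{cor:P-plus-minus} as well. The trade-off: your route is shorter and leans more on the projection machinery of Section~\ref{sec:projections}; the paper's direct proof of (i)~$\Rightarrow$~(ii) is more constructive and shows explicitly how inscribed balls $B(\0;r\delta)$ are swallowed by $\cH_n$, information that is lost in the softer argument.
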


\begin{remarks}
\begin{myenumi}
\setlength{\itemsep}{0pt plus 1pt}
\item[{\rm (a)}] 
If the random walk is recurrent, then $\cD = \Sp{d-1}$ (Proposition~\ref{prop:recurrence})
and so (i) and hence (iv) hold, so that $\cD_+ = \emptyset$. 
In other words, if $\cD_+ \neq \emptyset$, then $\cD \neq \Sp{d-1}$, and the random walk is transient.

\item[{\rm (b)}] 
Examples~\ref{ex:drift-alpha}
and~\ref{ex:ber-alpha} below show that (i) is not necessary for (iii), and (v) is not sufficient for (iii).
\end{myenumi}
\end{remarks}

In~\cite{mcrw}, it was shown that sufficient for $\Pr ( \cH_\infty = \R^d ) =1$
is that the random walk is recurrent; this follows from Theorem~\ref{thm:hull}
and the fact that recurrence implies that $\cD = \Sp{d-1}$ (Proposition~\ref{prop:recurrence}). Here are some further sufficient
conditions.

\begin{corollary}
\label{cor:zero-drift-hull}
Suppose that either (i) $X \eqd -X$, or (ii) $\Exp \| X \| < \infty$ and $\mu = \0$.
Then $\Pr ( \cH_\infty = \R^d ) =1$.
\end{corollary}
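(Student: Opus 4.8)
The plan is to derive the corollary from Theorem~\ref{thm:hull}: it suffices to verify condition~(iv), namely that $\cD_\pm = \Sp{d-1}$, since by that theorem (iv) is equivalent to (iii), i.e.\ to $\Pr(\cH_\infty = \R^d) = 1$. By Lemma~\ref{lem:projections-all} the sets $\cD_+$, $\cD_-$, $\cD_\pm$ partition $\Sp{d-1}$, so it is enough to show that $\cD_+ = \cD_- = \emptyset$ in each of the two cases. Both reductions hinge on one-dimensional random walk theory applied to the projected walk $(S_n \cdot \bu)_{n \geq 0}$, for an arbitrary fixed $\bu \in \Sp{d-1}$; this is a random walk on $\R$ with increment $X \cdot \bu$.

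For case~(ii), fix $\bu \in \Sp{d-1}$. Since $|X \cdot \bu| \le \|X\|$ we have $\Exp|X \cdot \bu| < \infty$, and $\Exp(X \cdot \bu) = \mu \cdot \bu = 0$; in particular $X \cdot \bu$ is non-degenerate, since $X \cdot \bu$ being a.s.\ constant would force that constant to be $0$, contradicting genuine $d$-dimensionality. Then $S_n \cdot \bu$ is recurrent (zero drift implies recurrence in one dimension; see e.g.~\cite[Theorem~4.2.7]{dur}). Combined with the trichotomy for one-dimensional walks (\cite[Theorem~4.1.2]{dur}), recurrence rules out $S_n \cdot \bu \to \pm \infty$ and leaves only the oscillating case $\liminf_{n\to\infty}(S_n \cdot \bu) = -\infty < +\infty = \limsup_{n\to\infty}(S_n \cdot \bu)$, a.s.; that is, $\bu \in \cD_\pm$. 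As $\bu$ was arbitrary, $\cD_\pm = \Sp{d-1}$.

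For case~(i), note that $X \eqd -X$ entails equality in law of the full trajectories, $(S_n)_{n \ge 0} \eqd (-S_n)_{n \ge 0}$, and therefore $(S_n \cdot \bu)_{n \ge 0} \eqd (-S_n \cdot \bu)_{n \ge 0}$ for every fixed $\bu \in \Sp{d-1}$. If $\bu \in \cD_+$, then $S_n \cdot \bu \to +\infty$ a.s., and applying the above distributional identity to the (measurable) event that the sequence tends to $+\infty$ gives $-S_n \cdot \bu \to +\infty$ a.s., i.e.\ $S_n \cdot \bu \to -\infty$ a.s.\ --- impossible. So $\cD_+ = \emptyset$, whence $\cD_- = -\cD_+ = \emptyset$, and Lemma~\ref{lem:projections-all} gives $\cD_\pm = \Sp{d-1}$.

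In both cases condition~(iv) of Theorem~\ref{thm:hull} holds, so $\Pr(\cH_\infty = \R^d) = 1$. I do not anticipate a substantial obstacle: the argument is essentially a translation into the one-dimensional projections, and the only delicate points are invoking genuine $d$-dimensionality to ensure $X \cdot \bu$ is non-degenerate (so that the one-dimensional trichotomy applies in its non-trivial form), and framing the symmetry argument of case~(i) at the level of the whole sequence $(S_n)_{n\geq 0}$ rather than at a fixed time.
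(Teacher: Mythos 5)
Your proposal is correct and follows essentially the same route as the paper: both reduce the claim to condition (iv) of Theorem~\ref{thm:hull} and verify $\cD_\pm = \Sp{d-1}$ by showing that each projected walk $S_n \cdot \bu$ oscillates, using zero drift with finite mean in case (ii) and symmetry in case (i). Your write-up merely spells out the non-degeneracy of $X \cdot \bu$ and the symmetry argument that the paper leaves implicit.
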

\begin{proof}
By Theorem~\ref{thm:hull}, it suffices to 
show that $\cD_\pm = \Sp{d-1}$. But under either  hypotheses (i) or (ii), the non-degenerate one-dimensional
random walk with increment distribution $X \cdot \bu$ oscillates.
\end{proof}

\begin{proof}[Proof of Theorem~\ref{thm:hull}.]
First suppose that (i) holds. 
If $\0 \in \Int \hull \cD$ then
there exist $m \in \N$ and $\bu_1, \ldots , \bu_m \in \cD$
such that $\0$ is also in the interior of the convex polytope $P ( \bu_1, \ldots, \bu_m ) := \hull \{ \bu_1, \ldots, \bu_m \}$. 
Let 
\[ R (\bv_1, \ldots, \bv_m) = \inf \{ \| \bx \| : \bx \in \R^d \setminus P (\bv_1, \ldots, \bv_m ) \} ,\]
which is zero unless $\0$ lies in the interior of $P (\bv_1, \ldots, \bv_m )$,
when it is equal to the shortest distance from $\0$ to the boundary of $P (\bv_1, \ldots, \bv_m )$.
In particular, note that $R(\bu_1, \ldots, \bu_m) = \delta_0 > 0$.

For $\bv_1, \ldots, \bv_m \in \R^d$,
the map $(\bv_1, \ldots, \bv_m ) \mapsto P (\bv_1, \ldots, \bv_m )$, as a 
function from $\R^{md}$ to convex, compact subsets of $\R^d$ with the Hausdorff metric, is
continuous.
So
the map from $(\bv_1, \ldots, \bv_m )$ to $R (\bv_1, \ldots, \bv_m)$ is also continuous. Hence for any $\delta \in (0,\delta_0)$, 
we can find $\eps >0$ sufficiently small such that $B ( \0 ; \delta)$ is contained in
$P ( \bv_1, \ldots, \bv_m )$ for all $\bv_i$ with $\| \bv_i - \bu_i \| < \eps$. 
For such an $\eps>0$, let 
\[ C_i (r,\eps)  = \{ \bx \in \R^d : \| \hat \bx - \bu_i \| < \eps, \, \| \bx \| \geq r \}. \]
Then for any $\bx_1, \ldots, \bx_m$ with $\bx_i \in C_i (r,\eps)$,
we have that $\hull \{ \hat \bx_1, \ldots, \hat \bx_m \}$ contains the ball $B ( \0 ; \delta )$.
Thus,
since $\| \bx_i \| \geq r$,
\[ B (\0 ; r\delta) \subseteq \hull \{ r \hat \bx_1, \ldots, r \hat \bx_m \} \subseteq \hull \{ \bx_1, \ldots, \bx_m\} .\]
Since $\bu_i \in \cD = \cD_\infty$ (by Theorem~\ref{thm:D-infty}), we have $S_n \in C_i (r,\eps)$ i.o., a.s. 
Thus $B ( \0 ; r \delta ) \subseteq \cH_n$
for all but finitely many $n$. That is $\liminf_{n \to \infty} r_n \geq r \delta$, a.s.
Since $r >0$ was arbitrary, we get $r_\infty = \infty$, a.s. Thus (i) implies (ii), and (ii)
is equivalent to (iii) by Lemma~\ref{lem:r-infinity}.

Suppose that $\bu \in \cD_+$, so that $\Pr (\bu \in \cL_+) = 1$. Then $S_n \cdot \bu \to \infty$,
so that $\inf_{n \geq 0} ( S_n \cdot \bu ) = c$
for some $c > - \infty$.
It follows that $S_0, S_1, S_2, \ldots$
are contained in the half-space $H_+ (\bu) = \{ \bx \in \R^d : \bx \cdot \bu \geq c \}$.
Thus $\cH_n \subseteq H_+ (\bu)$ for all $n$, and hence $\cH_\infty \subseteq H_+(\bu)$.
Thus $\cH_\infty = \R^d$ implies $\cD_+ = \cD_- = \emptyset$, and so, by
Lemma~\ref{lem:projections-all}, (iii) implies (iv).

To show that (iv) implies (iii), we prove the contrapositive.
By Lemma~\ref{lem:r-infinity}, it suffices to suppose that $\Pr ( \cH_\infty = \R^d ) =0$. 
Since $\cl \cH_\infty$ is closed and convex, it can be written as an intersection
of hyperplanes (see e.g.~Corollary~4.1 of~\cite[p.~55]{gruber}); in particular,
if $\cl \cH_\infty$ is not the whole of $\R^d$, it is contained in a half-space
$H_-(\bu) = \{ \bx \in \R^d : \bx \cdot \bu \leq c \}$ for some $\bu \in \Sp{d-1}$
and $c \in \R$. Thus $\sup_{n \geq 0} (S_n \cdot \bu) < \infty$. In particular,
$\cP_\pm$ is not the whole of $\Sp{d-1}$. By Corollary~\ref{cor:P-plus-minus},
this implies that $\cD_\pm \neq \Sp{d-1}$.

Finally, we show that~(iv) implies (v).
Suppose that $\0 \notin \hull \cD$. Since $\hull \cD$
is closed,
this means that there is a hyperplane that separates $\0$
from $\hull \cD$, so there is a $\bu \in \Sp{d-1}$ and $c < 0$ such that
$S (\bu) =  \{ \bx \in \Sp{d-1} : \bx \cdot \bu \geq c \}$ contains no point of $\cD$.
Since $S(\bu)$ is compact, it must thus contain only finitely many of $\hat S_0, \hat S_1, \ldots$.
That is $\limsup_{n \to \infty} ( \hat S_n \cdot \bu ) \leq c$, and hence $\limsup_{n \to \infty} ( S_n \cdot \bu ) \leq 0$.
In particular,
$\cP_\pm$ is not the whole of $\Sp{d-1}$, and Corollary~\ref{cor:P-plus-minus} shows that $\cD_\pm \neq \Sp{d-1}$.
\end{proof}

\section{Some examples}
\label{sec:examples}

Let $\be_1, \ldots, \be_d$ denote the standard orthonormal basis vectors of $\R^d$.
For convenience we locate all our random walks on the integer lattice $\Z^d$, but this is not essential.
We write 
$\xi \sim \Rad$ to mean that $\Pr ( \xi = +1 ) = \Pr ( \xi = -1 ) = 1/2$ (a Rademacher distribution),
and, for $\alpha >0$, write
$\zeta \sim S(\alpha)$ to mean that $\zeta \in \Z$ has $\Pr ( \zeta \geq r ) = \Pr ( \zeta \leq - r) = \frac{1}{2} r^{-\alpha}$
for $r \in \N$. 
Our examples are constructed mostly from components that are copies of $\xi \sim \Rad$
or $\zeta \sim S(\alpha)$.

If $\xi_1, \xi_2, \ldots$ are independent copies of $\xi \sim \Rad$, then we write
$W_n = \sum_{i=1}^n \xi_i$ for the associated simple symmetric random walk (SSRW) on $\Z$.
If $\zeta_1, \zeta_2, \ldots$ are independent copies of $\zeta \sim S(\alpha)$, 
then we write $Y_n = \sum_{i=1}^n \zeta_i$. 

We recall some well-known facts about $W_n$ and $Y_n$.
The  local limit theorem for SSRW on $\Z$ (see e.g.~\cite[pp.~141--143]{dur}) says that, with $\phi$ the standard Gaussian density function,
\begin{equation}
\label{eq:llt-ssrw}
 \lim_{n \to \infty} \sup_{x \in \Z} \left| \frac{n^{1/2}}{2} \Pr \left(  W_n = 2x -n  \right) - \phi \left( \frac{2x -n}{\sqrt{n}}  \right) \right| =0 .\end{equation}
If $\alpha \in (0,1)$, then
 $Y_n$ is transient and
oscillates: $|Y_n| \to \infty$ and $Y_n$ takes both signs i.o., and, moreover
(see e.g.~Theorem~3.5 of~\cite{griffin})
\begin{equation}
\label{eq-alpha-stable}
\text{ if $\alpha \in (0,1)$, then } \liminf_{n \to \infty} n^{-1} | Y_n | = \infty, \as
\end{equation}
If $\alpha \in (0,2)$, $\alpha \neq 1$, then
$n^{-1/\alpha}Y_n$ converges in distribution to
(a constant multiple of)
a symmetric $\alpha$-stable random variable,
since
$\zeta$ is in the corresponding domain of normal attraction, with no centering
(see e.g.~Theorem~2.6.7 of~\cite{il} and~\cite[p.~580]{feller}).
If $g$ is the density of this limiting  random variable, then
Gnedenko's local limit theorem (see Theorem~4.2.1 of~\cite{il})
says that
\begin{equation}
\label{eq:llt-stable}
 \lim_{n \to \infty} \sup_{x \in \Z} \left| n^{1/\alpha} \Pr \left( Y_n = x \right)  - g ( n^{-1/\alpha} x ) \right| = 0 .\end{equation}
Note that $g$ is uniformly bounded: this follows from the inversion formula
for densities and the fact that the characteristic
function of a symmetric stable random variable is of the form $\re^{-c|t|^\alpha}$, for some $c >0$ 
(see e.g.~\cite[p.~570]{feller}).

\begin{example}
\label{ex:drift-alpha}
Suppose that $d=2$.
Let $X = \be_1 + \be_2 \zeta$ where $\zeta \sim S(\alpha)$ for $\alpha > 0$.

If $\alpha >1$ then $\Exp \| X \| < \infty$ and $\Exp X = \be_1$,
so the SLLN
implies that $S_n$ is transient with limiting direction $\be_1$,
and Proposition~\ref{lem:lim-dir} shows that $\cD = \{ \be_1 \}$.

If $\alpha\in(0,1)$, then $\| S_n \| \geq | S_n \cdot \be_1 | = n$ so the walk is again transient.
 Write $X_i = \be_1 + \be_2 \zeta_i$
where the $\zeta_i$ are independent copies of $\zeta$.
Let $Y_n = \sum_{i=1}^n \zeta_i$.  
For $j = \pm 1$, 
\[ \| \hat S_n - j \be_2 \| \leq \frac{n}{\| S_n\|} + \left| \frac{Y_n}{\| S_n\|} - j \right| .\]
By~\eqref{eq-alpha-stable} we have that $n / \| S_n \| \leq n / | Y_n | \to 0$, a.s.,
and so $\| S_n \| / | Y_n | \to 1$, a.s., and hence
\[ \left| \frac{Y_n}{\| S_n\|} - \sign (Y_n) \right| \leq \left| \frac{|Y_n|}{\| S_n \|} - 1 \right| \to 0, \as \]
It follows that, for $j = \pm 1$,
\[ \liminf_{n \to \infty} \| \hat S_n - j \be_2 \|  = \liminf_{n \to \infty} | \sign (Y_n) - j | = 0, \as \]
Hence $\{ \pm \be_2 \} \subseteq \cD$.
On the other hand, if $\bu \in \Sp{1} \setminus \{ \pm \be_2 \}$, we have $u_1 := \bu \cdot \be_1 \neq 0$,
and
\[ \liminf_{n \to \infty} \| \hat S_n - \bu \| \geq \liminf_{n \to \infty} \left| \frac{n}{\| S_n \|} - u_1 \right| = | u_1 | > 0,\]
so $\bu \notin \cD$. Thus $\cD = \{ \pm \be_2 \}$.

Finally, note that this example obviously has $\cH_\infty \neq \R^2$ (since $S_n \geq 0$ for all $n$)
while $\0 \in \hull \cD$, but $\0 \notin \Int \hull \cD$. This shows that 
(iii) and (v) of Theorem~\ref{thm:hull} are not equivalent.
\end{example}

\begin{example}
\label{ex:ber-alpha}
Suppose that $d=2$.
Let $X = \be_1 \xi + \be_2 \zeta$ where $\xi$ and $\zeta$ are independent, 
$\xi \sim \Rad$, and
$\zeta \sim S(\alpha)$ for $\alpha > 0$.

First suppose that $\alpha > 2$. Here $\Exp ( \| X\|^2 ) < \infty$
and $\Exp X =\0$, so the central limit theorem applies, and Corollary~\ref{cor:two-moments}
shows that $\cD = \Sp{1}$. Alternatively, note that the walk in this case is recurrent (see e.g.~\cite[Theorem~4.2.8]{dur})
and apply Proposition~\ref{prop:recurrence}.

Next suppose that $\alpha \in (1,2)$. In this case $\Exp X = \0$ but $\Exp ( \| X \|^2 ) = \infty$.
Here the walk is transient, as
follows from the Borel--Cantelli lemma
and the local limit theorems~\eqref{eq:llt-ssrw}
and~\eqref{eq:llt-stable},
which together show that $\Pr ( S_n = \0 ) = \Pr ( W_n = 0 ) \Pr ( Y_n = 0 ) = O ( n^{-(1/2)-(1/\alpha) } )$.
By construction, $X \eqd - X$, so Proposition~\ref{prop:plane}
shows that $\cD = \Sp{1}$.

Finally, suppose that $\alpha \in (0,1)$. Since $| S_n \cdot \be_1 | \leq n$,
a similar argument to that in Example~\ref{ex:drift-alpha}
shows that $\cD = \{ \pm \be_2 \}$. Note that this walk is transient, by Corollary~\ref{cor:not-full-sphere},
and, by Corollary~\ref{cor:zero-drift-hull}, $\Pr ( \cH_\infty = \R^d ) =1$. 
This example has $\0 \in \hull \cD$, $\0 \notin \Int \hull \cD$, and $\Pr ( \cH_\infty = \R^d ) =1$,
 showing that 
(i) and (iii) of Theorem~\ref{thm:hull} are not equivalent.
\end{example}

\begin{example}
\label{ex:four-dim}
Suppose that $d \geq 4$.
Let $X = \sum_{k=1}^{d-1} \be_k \zeta^{(k)} + \be_d \xi$ where $\xi, \zeta^{(1)}, \ldots, \zeta^{(d-1)}$ are independent, 
$\xi \sim \Rad$, and
$\zeta^{(k)} \sim S(\alpha)$ for $\alpha \in (1,2)$. This random walk has $X \eqd -X$,
$\mu = \0$, and is transient.
Let $E_d := \{ \bu \in \Sp{d-1} : \bu \cdot \be_d = 0\}$, a copy of $\Sp{d-2}$.

Recall that $C(\bu;r) = \{ \bx \in \R^d \setminus \{ \0 \} : \| \hat \bx - \bu \| < r \}$.
Fix $\eps >0$, and set
\[ B_n := \{ (x_1,x_2, \ldots, x_d ) \in \Z^d : | x_d | \leq n^{(1/2)+\eps} \} .\]
Then we have the estimate
\[
 \Pr ( S_n \in C (\bu ; r) ) \leq \Pr ( | S_n \cdot \be_d | > n^{(1/2)+\eps} ) + \sum_{\bx \in B_n \cap C (\bu ; r) } \Pr ( S_n = \bx)   . \]
Here we have from the local limit theorems~\eqref{eq:llt-ssrw} and~\eqref{eq:llt-stable} that, for some $C< \infty$,
\[ \Pr ( S_n = \bx)  = \Pr ( W_n = x_d ) \prod_{i=1}^{d-1} \Pr ( Y_n = x_i ) \leq C n^{-(d-1)/\alpha} \cdot n^{-1/2} , \]
for all $\bx \in \Z^d$. 
Standard binomial tail bounds show that for SSRW $\Pr ( | W_n | > n^{(1/2)+\eps} ) \leq C \exp ( - c n^{2\eps} )$
for constants $c>0$ and $C < \infty$.
Thus we get
  \begin{equation}
\label{eq:four-dim-eq1}
 \Pr ( S_n \in C (\bu ; r) ) \leq  C \exp ( - c n^{2\eps} ) + C \sum_{\bx \in B_n \cap C (\bu ; r) } n^{-(d-1)/\alpha} \cdot n^{-1/2}  .\end{equation}
Fix $\bu \notin E_d$,
and take $0 < r < | \bu \cdot \be_d |$.
Then any $\bx = (x_1,x_2, \ldots, x_d) \in C (\bu;r)$
has 
\[ \bigl| x_d - \| \bx \| \bu \cdot \be_d \bigr| \leq \bigl\| \bx - \| \bx \| \bu \bigr\| < r \| \bx \| .\]
Thus $( | \bu \cdot \be_d | - r ) \| \bx \| < | x_d | < ( | \bu \cdot \be_d | + r ) \| \bx \|$.
It follows that there is a constant $C < \infty$ such that $| x_i | < C | x_d |$ for all $1 \leq i \leq d-1$
and all $\bx \in C (\bu;r)$.
Hence the number of $\bx \in B_n \cap C (\bu;r)$
is at most $O( n^{(d/2)+ d  \eps} )$. Thus we obtain from~\eqref{eq:four-dim-eq1} that
\[
 \Pr ( S_n \in C (\bu ; r) ) \leq  C \exp (- cn^{2\eps }) + C n^{d \eps} n^{- (d-1)(2-\alpha)/(2\alpha)} , \]
where $C<\infty$ depends on $\bu$ and $r$, but not $\eps$.
Thus 
for any $\alpha$ satisfying
\begin{equation}
\label{eq:alpha}
 1 < \alpha < \frac{2(d-1)}{1+d} \end{equation}
we can choose $\eps >0$ small enough to ensure that $\sum_{n \geq 1} \Pr ( S_n \in C (\bu ; r) ) < \infty$.
We can find $\alpha$ satisfying~\eqref{eq:alpha} provided $d > 3$. 

Thus if we have $d \geq 4$ and $\alpha$ satisfying~\eqref{eq:alpha}, the
 Borel--Cantelli lemma shows that $\bu \notin \cD$ for any $\bu \notin E_d$, i.e., $\cD \subseteq E_d$.
On the other hand,
we have $n^{-1/\alpha} S_n$ converges in distribution to $Z = (Z_1, \ldots, Z_{d-1}, 0)$,
where the $Z_i$ are independent $\alpha$-stable random variables with $\supp Z_i = \R$.
It follows that $\supp \hat Z = E_d$, and so, by Proposition~\ref{prop:D-supp},
we conclude that
$\cD = E_d$.
\end{example}

We write $\zeta \sim S_+(\alpha)$ to mean that $\zeta \in \ZP$ has $\Pr ( \zeta \geq r ) = r^{-\alpha}$
for $r \in \N$.

\begin{example}
Let $d \in \N$ and $\alpha \in (0,1)$.
Let $X = \sum_{j=1}^k \bu_j \zeta^{(j)}$ where $k \in \N$,
the $\zeta^{(j)} \sim S_+ (\alpha)$ are independent,
and $\bu_1, \ldots, \bu_k$ are fixed vectors in $\R^d$.
For $\bz = (z_1,\ldots, z_k) \in \R^k$, set $\Lambda (\bz) := \sum_{j=1}^k z_j \bu_j$.

Write $X_i = \sum_{j=1}^k \bu_j \zeta^{(j)}_i$,
where the $\zeta^{(j)}_i$ are independent copies of $\zeta^{(j)}$,
and let $Y^{(j)}_n = \sum_{i=1}^n \zeta^{(j)}_i$. 
Then $n^{-1/\alpha} ( Y^{(1)}_n , \ldots, Y^{(k)}_n )$ converges in distribution to $( Z_1, \ldots, Z_k)$,
where $Z_1, \ldots, Z_k$ are independent, positive $\alpha$-stable random variables supported on $\RP$. 
By the continuous mapping theorem, 
$n^{-1/\alpha} S_n$ converges in distribution to $\sum_{j=1}^k \bu_j Z_j =: V$.
Since $V$ is continuous, $\Pr ( V = \0 ) = 0$, and so $\Pr ( \hat V \in \Sp{d-1} ) =1$. Thus
\[ \supp \hat V = C := C(\bu_1, \ldots, \bu_k ) := \cl \left\{ \frac{\Lambda (\bz)}{ \| \Lambda (\bz) \|}  : \bz \in \R^k, \, z_1, \ldots, z_k > 0 , \, \| \Lambda (\bz)\| > 0 \right\}. \]
Hence by Proposition~\ref{prop:D-supp}(ii) we have that $C \subseteq \cD$.

To get an inclusion in the other direction, we use the notation of Section~\ref{sec:convexity}.
We have $\supp X = \cl \{ \Lambda (\bz) : \bz \in \N^k \}$, and 
for any $\bx \in \supp X$, either $\hat \bx = \0$ (if $\bx = \0$)
or else $\hat \bx = \lim_{n \to \infty} \hat \bx_n \in \Sp{d-1}$
with $\bx_n = \Lambda (\bz_n)$ and $\bz_n \in \N^k$. It follows that
\[ \left\{ \frac{\Lambda (\bz)}{\| \Lambda (\bz) \| } : \bz \in \N^k, \, \| \Lambda (\bz ) \| > 0 \right\}
\subseteq \cX' \subseteq \{ \0 \} \cup \cl \left\{ \frac{\Lambda (\bz)}{\| \Lambda (\bz) \| } : \bz \in \N^k, \, \| \Lambda (\bz ) \| > 0 \right\} .\]
Lemma~\ref{lem:sphere-support} then shows that
\[ \cX = \cl \left\{ \frac{\Lambda (\bz)}{\| \Lambda (\bz) \| } : \bz \in \N^k, \, \| \Lambda (\bz ) \| > 0 \right\}  = \cl \left\{ \frac{\Lambda (\bz)}{\| \Lambda (\bz) \| } : \bz \in \lambda \N^k, \, \| \Lambda (\bz ) \| > 0 \right\} ,\]
for any $\lambda >0$, by scale invariance. It follows that
\[  \cX = \cl \left\{ \frac{\Lambda (\bz)}{\| \Lambda (\bz) \| } : \bz \in \Q^k, \, z_1, \ldots, z_k > 0, \, \| \Lambda (\bz ) \| > 0 \right\} .\]
Since $\Q^k$ is dense in $\R^k$, we get $\cX = C$.
Moreover,  $C$ is the closure of an s-convex set, and hence itself s-convex, by Lemma~\ref{lem:closure},
and hence $\cl \shull \cX = \shull \cX = C$, by Lemma~\ref{lem:smallest-s-convex}. Then Theorem~\ref{thm:support_bound}
confirms that $\cD = C$.
\end{example}

\section{Concluding remarks}

The Borel--Cantelli lemma shows that if for some $\eps>0$, $\sum_{n=1}^\infty \Pr ( \| \hat S_n - \bu \| < \eps ) < \infty$,
then $\Pr ( S_n \in C(\bu;\eps) \io ) = 0$, and so $\bu \notin \cD$, by Proposition~\ref{prop:D-cones}.
This is not sharp, however, as is already shown by the case of $d=1$, when, for example, $+1 \in \cD$ if and only if
$\sum_{n=1}^\infty n^{-1} \Pr ( S_n > 0 ) = \infty$~\cite[p.~415]{feller}.

\begin{problem}
\label{prob1}
Is there a criterion for $\bu \in \cD$ in terms of $\Pr ( S_n \in \, \cdot \, )$?
\end{problem}

We do not necessarily expect a simple answer to Problem~\ref{prob1}: in $d=1$, 
Kesten~(Corollary~1 of \cite[p.~1177]{kesten70}) gives a criterion for $\bx \in \cA^\alpha$  
where $\cA^\alpha$ is as defined at~\eqref{eq:A-alpha}.

Proposition~\ref{prop:plane} leaves the following question.

\begin{problem}
Suppose that $d=2$, $\Exp \| X \| < \infty$, and $\mu = \0$. Is $\cD$ always equal to $\Sp{1}$?
\end{problem}

\appendix

\section{The recurrent case}

For most of the questions in the present paper, the main interest is the transient case,
because, loosely speaking, any recurrent random walk explores all of space and hence all directions at all distances.
Proposition~\ref{prop:recurrence-coverage} is a precise version of this statement.
Recall~\cite[p.~190]{dur} that $S_n$ is \emph{recurrent}
if there is a non-empty set $\cR$ of points $\bx \in \R^d$ (the recurrent values) such that, for any $\eps >0$,
$\| S_n - \bx \| < \eps$ i.o., a.s.

\begin{proposition}
\label{prop:recurrence-coverage}
If $S_n$ is recurrent, then there exists $h >0$ such that a.s.,
 for any $\bx \in \R^d$, $S_n \in B ( \bx ; h)$ i.o.
\end{proposition}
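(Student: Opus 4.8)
The plan is to exploit the classical structure theory of recurrent random walks. First I would recall (see e.g.~\cite[\S 4.2]{dur}) that when $S_n$ is recurrent, the set $\cR$ of recurrent values is a closed subgroup of $(\R^d, +)$, and that $\cR$ contains $\supp X$ (in fact $\cR$ is the closed subgroup of $\R^d$ generated by $\supp X$). If one prefers a more self-contained derivation of the containment $\supp X \subseteq \cR$, one can note that $\0 \in \cR$ and then run a conditional Borel--Cantelli argument: from each of the infinitely many times $n$ at which $S_n$ lies near $\0$, the walk has conditional probability bounded below of next jumping near a given point of $\supp X$. Since $S_n$ is genuinely $d$-dimensional, $\supp X$ spans $\R^d$, so $\cR$ is a closed subgroup of $\R^d$ whose linear span is all of $\R^d$; in particular $\cR$ is not contained in any hyperplane.

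Next I would invoke the classification of closed subgroups of $\R^d$: any such subgroup has the form $\cR = V \oplus L$, where $V$ is a linear subspace and $L$ is a lattice of some rank in a complementary subspace $W$. The requirement that $\cR$ span $\R^d$ forces $L$ to have full rank in $W$, so that the quotient $\R^d / \cR \cong W / L$ is a compact torus. Equivalently, $\cR$ is co-compact: there is a constant $h_0 \in [0,\infty)$ with $\R^d = \cR + \cl B(\0 ; h_0)$, i.e., every point of $\R^d$ lies within distance $h_0$ of $\cR$.

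Finally I would pass from fixed $\bx$ to all $\bx$ simultaneously. Set $h := h_0 + 2$, and let $\{ \by_1, \by_2, \ldots \}$ be a countable dense subset of $\cR$. Each $\by_j$ is a recurrent value, so $\Pr ( S_n \in B(\by_j ; 1) \io ) = 1$; intersecting over the countably many $j$, the event $E := \bigcap_j \{ S_n \in B(\by_j ; 1) \io \}$ has probability $1$. On $E$, given any $\bx \in \R^d$, choose $\by \in \cR$ with $\| \bx - \by \| \leq h_0$ and then $j$ with $\| \by_j - \by \| < 1$; then for any $\bz \in B(\by_j ; 1)$ we have $\| \bz - \bx \| \leq \| \bz - \by_j \| + \| \by_j - \by \| + \| \by - \bx \| < h_0 + 2 = h$, so $B(\by_j ; 1) \subseteq B(\bx ; h)$, and since $S_n \in B(\by_j ; 1)$ i.o.\ on $E$, also $S_n \in B(\bx ; h)$ i.o.\ on $E$. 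As $\Pr(E) = 1$, this gives the statement.

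I expect the first paragraph to be the main obstacle: one must identify exactly which form of the recurrence structure theorem to cite (namely that in the recurrent case the recurrent values form a closed subgroup whose span is $\R^d$), and, if aiming for self-containedness, supply the short conditional Borel--Cantelli argument for $\supp X \subseteq \cR$. The classification of closed subgroups of $\R^d$ and the concluding approximation argument are routine.
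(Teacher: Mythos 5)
Your proposal is correct and follows essentially the same route as the paper: the recurrent values form a closed subgroup of $\R^d$ which, by genuine $d$-dimensionality, is co-compact, and a countable family of probability-one ``i.o.'' events plus the triangle inequality finishes the argument. The only cosmetic difference is that the paper gets co-compactness by citing Bhattacharya--Rao to embed a full-rank lattice $H\Z^d$ inside the group of recurrent values, whereas you invoke the general classification of closed subgroups of $\R^d$; either standard fact does the job.
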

\begin{proof}
Since $S_n$ is recurrent, the set $\cR$ of recurrent values is a closed subgroup of $\R^d$
and coincides with the set of \emph{possible values} for the walk: see~\cite[p.~190]{dur}.
Since $S_n$ is genuinely $d$-dimensional, it follows from e.g.~Theorem 21.2 of~\cite[p.~225]{br} that $\cR$ 
contains a further closed subgroup $\cR'$ of the form $H \Z^d$ where $H$ is a 
non-singular $d$ by $d$ matrix. 
Hence there exists $h >0$ such that for every $\bx \in \R^d$ there exists $\by \in \cR'$ with $\| \bx - \by \| < h /2$,
and since $\cR'$ is a countable set of recurrent values for the walk,
we have that, a.s.,  for any $\bx \in \R^d$, $S_n \in B(\bx;h)$ i.o.
\end{proof}

\section*{Acknowledgements}

Some of this work was done while
the first author was visiting Durham University in July--August 2018, supported by an 
international
study scholarship awarded by the Secretariat of Public Education of Mexico and Universidad Nacional Aut\'onoma de M\'exico (UNAM).
The authors are grateful for the comments of an anonymous referee, and also
to Nicholas Georgiou, James McRedmond, Mikhail Menshikov, and Vladislav Vysotskiy for discussions on the topic of this work.


\begin{thebibliography}{99}

\bibitem{br} 
R.N.\ Bhattacharya and R.R.\ Rao, 
\emph{Normal Approximation and Asymptotic Expansions},
updated reprint of the 1986 edition.
SIAM, Philadelphia, 2010.

\bibitem{blackwell}
D.\ Blackwell,
On transient Markov processes with a countable number of states and stationary transition probabilities.
\emph{Ann.\ Math.\ Statist.}\ {\bf 26} (1955) 654--658.

\bibitem{bucythesis}
R.S.\ Bucy,
\emph{Recurrent Events for Transient Markov Chains}.
PhD.~Thesis, University of California, Berkeley, 1963.

\bibitem{bucy} 
R.S.\ Bucy,
Recurrent sets.
{\em Ann.\ Math.\ Statist.}\ {\bf 36} (1965) 535--545.


\bibitem{cd}
K.L.\ Chung and C.\ Derman,
Non-recurrent random walks.
\emph{Pacific J.\ Math.}\ {\bf 6} (1956) 441--447.

\bibitem{cf}
K.L.\ Chung and W.H.J.\ Fuchs,
On the distribution of values of sums of random variables.
\emph{Mem.\ Amer.\ Math.\ Soc.}\ {\bf 6} (1951) 12pp.

\bibitem{doney}
R.A.\ Doney,
Recurrent and transient sets for 3-dimensional random walks.
\emph{Z.\ Wahrscheinlich.\ verw.\ Geb.}\ {\bf 4} (1965) 253--259.

\bibitem{dur} 
R.\ Durrett,
  \emph{Probability: Theory and Examples}, 4th ed.
  Cambridge University Press, Cambridge,  2010.

\bibitem{erickson76}
K.B.\ Erickson, 
Recurrence sets of normed random walk in $R^d$.
\emph{Ann.\ Probab.}\ {\bf 4} (1976) 802--828.

\bibitem{erickson00}
K.B.\ Erickson, 
The limit points in $\overbar{R^d}$
of averages of i.i.d.~random variables.
\emph{Ann.\ Probab.}\ {\bf 28} (2000) 498--510.

\bibitem{ek}
K.B.\ Erickson and H.\ Kesten,
Strong and weak limit points of a normalized random walk.
\emph{Ann.\ Probab.}\ {\bf 2} (1974) 553--579.

\bibitem{feller}
W.\ Feller,
\emph{An Introduction to Probability Theory and its Applications, Volume II}.
2nd ed., Wiley, New York, 1971.

\bibitem{griffin} 
P.S.\ Griffin,
An integral test for the rate of escape
of $d$-dimensional random walk.
\emph{Ann.\ Probab.}\ {\bf 11} (1983) 953--961.

\bibitem{gruber}
P.M.\ Gruber,
\emph{Convex and Discrete Geometry}.
Springer, New York, 2007.

\bibitem{hs}
E.\ Hewitt and L.J.\ Savage, 
Symmetric measures on Cartesian products. 
\emph{Trans.\ Amer.\ Math.\ Soc.}\ {\bf 80} (1955) 470--501.

\bibitem{il}
  I.A.\ Ibragimov and Y.V.\ Linnik,
  \emph{Independent and Stationary Sequences of Random Variables}.
  Wolters-Noordhoff, Groningen, The Netherlands, 1971.

\bibitem{ik}
K.\ It\^o and H.P.\ McKean Jr.,
Potentials and the random walk. 
\emph{Illinois J.\ Math.}\ {\bf 4} (1960) 119--132.

\bibitem{jo}
N.C.\ Jain and S. Orey,
Some properties of random walk paths.
\emph{J.\ Math.\ Anal.\ Appl.}\ {\bf 43} (1973) 795--815.


\bibitem{kesten70}
H.\ Kesten, 
The limit points of a normalized random walk.
{\em Ann.\ Math.\ Statist.}\ {\bf 41} (1970) 1173--1205.

\bibitem{kesten78}
H.\ Kesten,
 Erickson's conjecture on the rate of escape of $d$-dimensional random walk.
\emph{Trans.\ Amer.\ Math.\ Soc.}\ {\bf 240}  (1978) 65--113.

\bibitem{kuelbs}
J.\ Kuelbs,
When is the cluster set of $S_n/a_n$ empty?
\emph{Ann.\ Probab.}\ {\bf 9} (1981) 377--394.

\bibitem{lamp}
J.\ Lamperti, 
Wiener's test and Markov chains.
{\em J.\ Math.\ Anal.\ Appl.}\ {\bf 6} (1963) 58--66.

\bibitem{ll}
G.F.\ Lawler and V.\ Limic,
\emph{Random Walk: A Modern Introduction}. Cambridge University Press,
Cambridge, 2010.

\bibitem{mcrw}
J.\ McRedmond and A.R.\ Wade,
The convex hull of a planar random walk: perimeter, diameter, and shape. 
\emph{Electron.\ J.\ Probab.}\ {\bf 23} (2018) paper no.~131.

\bibitem{mpw} 
M.\ Menshikov, S.\ Popov, and A.\ Wade,
\emph{Non-homogeneous Random Walks}.
Cambridge University Press, Cambridge, 2016.

\bibitem{molchanov}
I.\ Molchanov, \emph{Theory of Random Sets.}
2nd ed., Springer, London, 2017.

\bibitem{murdoch}
B.H.\ Murdoch,
Wiener's test for atomic Markov chains.
\emph{Illinois J.\ Math.}\ {\bf 12} (1968) 35--56.

\bibitem{pruitt}
W.E.\ Pruitt,
The contribution to the sum of the summand of maximum modulus.
\emph{Ann.\ Probab.}\ {\bf 15} (1987) 885--896.

\bibitem{schmutz}
E.\ Schmutz,
Rational points on the unit sphere.
{\em Cent.\ Eur.\ J.\ Math.}\
{\bf 6} (2008) 482--487.

\bibitem{spitzer}
F.L.\ Spitzer,
\emph{Principles of Random Walk}, 2nd ed. Springer, New York, 1970.

\bibitem{uchiyama}
K.\ Uchiyama,
Wiener's test for random walks with mean zero and finite variance.
\emph{Ann.\ Probab.}\ {\bf 26} (1998) 368--376.

\end{thebibliography}
\end{document}